\documentclass[1996/10/24]{amsart}

\usepackage{color}

\allowdisplaybreaks[3]

\usepackage{amsfonts}
\usepackage{epsf,latexsym,amsfonts,amsbsy,mathrsfs}
\usepackage{amsmath, amssymb, amsthm,amscd,amsxtra}
\usepackage[]{graphicx,subfigure}
\usepackage{epstopdf}
\usepackage{float}
\usepackage{multirow}
\usepackage{epsfig}
\usepackage{tikz}
\usetikzlibrary{positioning} 
\usepackage{xcolor}
\usepackage{marginnote}
\usepackage{sidenotes}

\usepackage{changes}
\usepackage{lipsum}

\newtheorem{theorem}{Theorem}[section]
\newtheorem{lemma}[theorem]{Lemma}

\theoremstyle{definition}

\newtheoremstyle{italicremark}
{3pt}
{3pt}
{\itshape}
{}
{\bfseries}
{.}
{ }
{}

\theoremstyle{italicremark}
\newtheorem{remark}[theorem]{Remark}

\numberwithin{equation}{section}

\newcommand{\ignore}[1]{}

\newtheorem{rem}{Remark}[section]

\begin{document}
	
	\title[]{ bound-preserving Adaptive Time-Stepping Method with Energy Stability for Simulating Compressible Gas Flow in Poroelastic Media}
	
	\author{Huangxin Chen}
	\address{School of Mathematical Sciences and Fujian Provincial Key Laboratory on Mathematical Modeling and
		High Performance Scientific Computing, Xiamen University, Fujian, 361005, China}
	\email{chx@xmu.edu.cn}
	\author{Yuxiang Chen}
	\address{School of Mathematical Sciences and Fujian Provincial Key Laboratory on Mathematical Modeling and
		High Performance Scientific Computing, Xiamen University, Fujian, 361005, China}
	\email{chenyuxiang@stu.xmu.edu.cn}
	\author{Jisheng Kou}
	\address{State Key Laboratory of Intelligent Deep Metal Mining and Equipment,  Shaoxing University, Shaoxing 312000, Zhejiang, China} 
	\email{jishengkou@163.com}
	\author{Shuyu Sun}
	\address{School of Mathematical Sciences, Tongji University, Shanghai 200092, China.}
	\email{suns@tongji.edu.cn}
	\subjclass[2010]{}
	
	\keywords{gas flow in porous media, poroelasticity, energy stability, conservation of mass, adaptive time step,
		stabilized scheme}

	\date{}
	
	\begin{abstract}
		{In this paper, we present an efficient numerical method to address a thermodynamically consistent gas flow model in porous media involving compressible gas and deformable rock. The accurate modeling of gas flow in porous media often poses significant challenges due to their inherent nonlinearity, the coupling between gas and rock dynamics, and the need to preserve physical principles such as mass conservation, energy dissipation and molar density boundedness. The system is further complicated by the need to balance computational efficiency with the accuracy and stability of the numerical scheme. To tackle these challenges, we adopt a stabilization approach that is able to preserve the original energy dissipation while achieving linear energy-stable numerical schemes. We also prove the convergence of the adopted linear iterative method. At each time step, the stabilization parameter is adaptively updated using a simple and explicit formula to ensure compliance with the original energy dissipation law. The proposed method uses adaptive time stepping to improve computational efficiency while maintaining solution accuracy and boundedness. The adaptive time step size is calculated  explicitly at each iteration, ensuring stability and allowing for efficient handling of highly dynamic scenarios.  A mixed finite element method combined with an upwind scheme is employed as spatial discretization   to ensure mass conservation and stability. Finally, we conduct a series of numerical experiments to validate the performance and robustness of the proposed numerical method.}
	\end{abstract}
	
	\maketitle
	
	\section{Introduction}
	Porous media flow processes often involve complex physical phenomena. Traditional analytical methods are difficult to accurately describe these complex phenomena, but numerical simulations can flexibly handle these complex interactions by establishing accurate mathematical models. With the development of computing technology and advancement of experimental methods, researchers continue to propose more accurate models and efficient numerical methods to gradually solve complex flow problems. Continued research in this area will provide important support for energy development, environmental protection and industrial optimization \cite{Chen2006,Firoo1999}. The phenomenon of gas flow inside porous media has become a popular research topic in the field of modeling and simulation \cite{El2018, Guo2018, Mikyska2014}. In porous media, the classic flow model mainly describes the penetration and flow behavior of gas in porous media. These models are based on classical fluid mechanics theory and describe the flow characteristics of gases through a series of assumptions and simplifications. When simulating gas flow in porous media, energy stability and rock compressibility are two factors that must be considered. They respectively affect the thermodynamic behavior of gas flow and the structural response of porous media. Biot's poroelasticity theory is used to explain the deformation of solid skeletons \cite{Biot1941}. This theory first obtains the displacement of the solid by solving the solid mechanics equation, and then calculates the change in porosity based on these displacements. Energy stability requires the model to adhere to the law of energy dissipation, which is derived from the second law of thermodynamics. It states that, in an isothermal, closed system, total free energy decreases over time during spontaneous processes until the system reaches a stable or equilibrium state. In this paper, we consider the compressible single-phase gas flow model in porous media that satisfies the law of energy dissipation, as proposed in paper \cite{Chencmame2024}. The model explains the effect of rock deformation on porosity by introducing the pore elasticity equation and the state equation for porosity variation. The Peng-Robinson equation of state (PR-EoS) \cite{Robinson1976} is used to characterize the relationship between molar density and pressure at a given temperature. Additionally, the chemical potential gradient is used to replace the pressure gradient as the main driving force. Finally, the free energy of the rock skeleton is introduced to account for the compressibility of the rock.

	Due to our physical model strictly adhering to the principles of thermodynamic consistency, the numerical methods used for its implementation must also satisfy this critical property, as violations can lead to non-physical and unstable results \cite{Chen2006,Kou2018,kousun2018,Casas-Vazquez2008}. Established approaches range from the rigorously nonlinear convex splitting method \cite{Eyre1998,Qiao2014}, which guarantees stability at the cost of solving nonlinear systems, to more efficient linear strategies. The latter category includes versatile stabilization methods \cite{Ju2021,Li2023,Shen2018} and structure-preserving exponential time-differencing (ETD) schemes \cite{Beylkin1998,Cox2002}, which offer improved computational efficiency.     Furthermore, the invariant energy quadratization (IEQ) \cite{Ju2017, Wang2017}  and scalar auxiliary variable (SAV) \cite{Shen2018, Shen2019} approaches provide linear and easily implementable energy-stable schemes, though the discrete energy they produce typically differs from the original functional. Another notable method in this domain is the energy factorization (EF) approach \cite{kou2020, kou2022}, which also operates within an efficient linear framework while successfully preserving the dissipation characteristics of the original energy. Building upon this foundation, our work adopts the novel stabilization method proposed by Kou et al. \cite{kou2023}, which achieves a linear energy-stable scheme while exactly preserving the original energy functional.
    
Beyond energy stability, the positivity and boundedness of molar density are fundamental and essential physical constraints in the numerical simulation of compressible gas flow in porous media. Designing numerical schemes that respect these bounds is crucial for obtaining physically reasonable solutions. The exploration of provably bound-preserving schemes has led to various strategies, including the Lagrange multiplier approach \cite{Cheng2022, IICheng2022}, the variational approach  \cite{Joshi2018, Yang2017}, the post-processing approach \cite{Zhang2010}, the cut-off approach \cite{Yang2022cutoff}, and the nonlinear convex splitting approach \cite{Wise2012, Dong2021, Shen2021}. Recent advances have further produced high-order methods that simultaneously ensure energy stability and solution boundedness. These include stabilized linear schemes requiring only linear \cite{Tang2016,Shen2016}, auxiliary variable methods like SAV combined with cut-off operations \cite{Akrivis2019,Shen2019}.  Building on the foundation of exponential integrators, a significant advancement was achieved by Ju et al. \cite{Ju2022,JuJCS2022}, who ingeniously combined SAV with stabilized first-order ETD and ETDRK2 methods. This hybrid approach successfully constructed schemes that simultaneously preserve the original energy-dissipation law and the maximum bound principle (MBP) for a class of Allen-Cahn type gradient flows. However, in the specific context of gas flow in porous media, many overshoot/undershoot correction techniques may compromise numerical accuracy and critical properties like local mass conservation. To address this challenge, the present paper adopts an adaptive time-stepping strategy to maintain the molar density bounds. Specifically, we employ the novel stabilization method from \cite{kou2023} to separate the discretized chemical potential into explicit and semi-implicit stability terms. This allows us to explicitly handle the velocity in the mass conservation equation during linear iterations, thereby deriving an explicit formula to adaptively compute the time step at each iteration, ensuring both thermodynamic consistency and solution boundedness.  Meanwhile, a mixed finite element method with upwind schemes is used for spatial discretization to guarantee mass conservation, while the momentum equation of poroelasticity is solved using a discontinuous Galerkin method to prevent the locking phenomenon \cite{Phillips2008}.
	
The paper is organized as follows. In section 2, we introduce the formulation of gas flow model with rock compressibility. In section 3, we propose the fully discrete numerical scheme. In section 4, we analyze the selection criteria for the time-adaptive algorithm. In section 5, the convergence of the iterative method and the energy stability are rigorously proven. In section 6, numerical experiments are presented to validate the proposed methods.
	
	\section{Mathematical model}
	In this section, we present the mathematical model for a thermodynamically consistent gas flow system in porous media, which accounts for the interaction between compressible gas and deformable rock. The model is designed to ensure thermodynamic consistency, capturing the essential physical processes such as gas flow, rock deformation, and their mutual coupling. 
	
	The formulation of the gas flow model with compressible rock in porous media is presented as follows (cf. \cite{Chencmame2024})
	\begin{subequations}\label{eq-con-1}
		\begin{align}
			&\nabla\cdot\mathbf{\sigma}(\boldsymbol{u}_{s}, p) = 0, \qquad\qquad{\rm in}~\Omega_{t}=:\Omega\times(0,t),\label{eq-1-1}\\
			&\frac{\partial (\phi c)}{\partial t} + \nabla\cdot(\boldsymbol{u}_{f}c) = 0 ,\quad \,    {\rm in}~\Omega_{t},\label{eq-1-2-c}\\
			&\boldsymbol{u}_{f} = -\lambda(\phi)c\nabla \mu, ~\qquad \    \ \ \ {\rm in}~\Omega_{t},\\
			&p = c\mu(c) - f(c), \qquad\ \ \ \ \    {\rm in}~\Omega_{t},\label{eq-1-2-e}\\
			&\frac{\partial \phi}{\partial t} = \frac{1}{N}\frac{\partial p}{\partial t} + \alpha  \nabla\cdot \textbf{v}_s,  \quad \    {\rm in}~\Omega_{t}.\label{eq-1-6}
		\end{align}
	\end{subequations}
	Here, the mobility $\lambda(\phi)$ is defined as
	
	\[
	\lambda(\phi) = \frac{\kappa(\phi)}{\nu},
	\]
	where $\nu$ is the viscosity of the gas, $\phi$ is the porosity and $\kappa(\phi)$ represents the permeability by the Kozeny-Carman model (cf. \cite{Chen2006}) as
	
	\[
	\kappa(\phi) = \kappa^{0} \left(\frac{\phi}{\phi_r}\right)^{3} \left(\frac{1-\phi_r}{1-\phi}\right)^{2}.
	\]
	Here, $\kappa^0$ is the initial intrinsic permeability, $\phi_r$ is the porosity at the reference pressure.

	The stress tensor $\boldsymbol{\mathbf{\sigma}}(\boldsymbol{u}_{s}, p)$ is given by	
	\begin{align*}
	\boldsymbol{\sigma}(\boldsymbol{u}_{s}, p) = \boldsymbol{\sigma}_e - \alpha p \boldsymbol{I},  \quad
\boldsymbol{\sigma}_e =  2\eta \boldsymbol{\varepsilon}(\boldsymbol{u}_s) + \gamma \operatorname{div} (\boldsymbol{u}_s) \boldsymbol{I}, 
	\end{align*}
	where $\boldsymbol{\varepsilon}(\boldsymbol{u}_s) = \frac{1}{2}\left(\nabla \boldsymbol{u}_s + (\nabla \boldsymbol{u}_s)^{T}\right)$ is the strain tensor, $\boldsymbol{\sigma}_e$ is the effective stress tensor, $\boldsymbol{I}$ is the unit tensor, $\boldsymbol{u}_s$ is the displacement of the solid phase, $\boldsymbol{v}_s = \frac{\partial \boldsymbol{u}_s}{\partial t}$ is the velocity of the solid phase, $\eta$ and $\gamma$ are the Lamé parameters, $\alpha$ is the Biot's coefficient, $N = \frac{(\alpha - \phi) }{K_s}$ is the Biot's modulus, $K_s$ is the bulk modulus of the solid grains, $c$ is the molar density of the gas.
	
	The pressure $p = p(c)$ is given by
	\begin{align}\label{eq-pres-peng}
		p=\frac{c R T}{1-\beta c}-\frac{b c^2}{1+2 \beta c-\beta^2 c^2}.
	\end{align}
	
	The Helmholtz free energy density $f(c)$ and the chemical potential $\mu(c)$ are determined by the Peng-Robinson equation of state. These quantities are expressed as
	\begin{subequations}\label{eq-f-1}
		\begin{align}
			&f(c)=f_{i d e}(c)+f_{r e p}(c)+f_{a t t}(c),\\
			&f_{\text {ide }}(c)=c R T \ln (c),\\
			&f_{\text {rep }}(c)=-c R T \ln (1-\beta c),\\
			&f_{a t t}(c)=\frac{b(T) c}{2 \sqrt{2} \beta} \ln \left(\frac{1+(1-\sqrt{2}) \beta c}{1+(1+\sqrt{2}) \beta c}\right),
		\end{align}
	\end{subequations}
	and
	\begin{align}
		\mu(c)=f^{\prime}(c).
	\end{align}	
	
	This formulation integrates the key physical mechanisms, including poroelasticity, gas compressibility and non-ideal gas behavior, which provides a robust framework for modeling gas flow in deformable porous media. The use of the Peng-Robinson equation of state ensures precise representation of the gas phase's thermodynamic properties \cite{Robinson1976}. In equation \eqref{eq-f-1}, the term $f_{\text{ide}}$ represents the contribution from an ideal, homogeneous gas, whereas $f_{\text{rep}}$ and $f_{\text{att}}$ describe the effects arising from intermolecular repulsive and attractive forces, respectively. We introduce the critical temperature $T_c$ and define the reduced temperature as $T_r = T / T_c$. The parameters $b$ and $\beta$ are determined based on the fluid’s critical properties and its acentric factor.
	$$
	b=0.45724 \frac{R^2 T_c^2}{P_c}\left(1+m\left(1-\sqrt{T_r}\right)\right)^2, \quad \beta=0.07780 \frac{R T_c}{P_c},
	$$
	where $P_c$ is the critical pressure and $m$ relies on the acentric factor $\omega$
	$$
	\begin{array}{c}
		m=0.37464+1.54226 \omega-0.26992 \omega^2,\quad \omega \leq 0.49, \\
		m=0.379642+1.485030 \omega-0.164423 \omega^2+0.016666 \omega^3,\quad \omega>0.49 .
	\end{array}
	$$

	In order to close the system, the following boundary conditions are imposed as
	\begin{align}
		\mathbf{\sigma}(\boldsymbol{u}_s, p)\cdot\boldsymbol{n} = 0, \qquad {\rm on}~\partial\Omega,\label{eq-BD-1}\\
		\boldsymbol{u}_{f}\cdot\boldsymbol{n} = 0, \qquad {\rm on}~\partial\Omega.
	\end{align}
	
	The total free energy of \eqref{eq-con-1} and \eqref{eq-BD-1} in $\Omega$ can be defined as 
	\begin{align}\label{eq-energy_1}
		E(t) = 	\int_{\Omega}\left(\phi f(c) +\frac{1}{2}\mathbf{\sigma}_{e}(\boldsymbol{u}_{s}):\varepsilon(\boldsymbol{u}_{s}) + \frac{1}{2N} p^2\right) ~d \boldsymbol{x}.
	\end{align}
	\begin{remark}
		It is noteworthy that the total free energy of the system includes not only the fluid free energy and the elastic strain energy of the solid skeleton, but also an additional term $\frac{1}{2N} p^2$. This term accounts for the energy stored due to the compressibility of the solid framework under pore pressure, and is interpreted as the compressive storage energy.
		Under isothermal conditions, the variation of the solid density $\rho_s$ is governed by the following equation of state  \cite{Lewis1998}
		\begin{align}\label{eq-comeq}
			\frac{1}{\rho_s} \frac{\partial \rho_s}{\partial t} = \frac{1}{1 - \phi} \left[\frac{1}{N} \frac{\partial p}{\partial t} - (1 - \alpha) \nabla \cdot \boldsymbol{v}_s \right].
		\end{align}
		
		This equation shows that changes in solid density are influenced not only by volumetric strain, but also directly by pore pressure. Therefore, the energy term $\frac{1}{2N} p^2$ reflects the stored energy resulting from the compressibility of the solid matrix. Substituting Equation \eqref{eq-comeq} into the mass conservation equation for the solid yields the porosity equation \eqref{eq-1-6} (cf. \cite{Lewis1998}). When the solid phase approaches incompressibility, the bulk modulus $N$ tends to infinity, and consequently the term $\frac{1}{2N} p^2$ becomes negligible. This implies that the influence of pore pressure on the solid density can be disregarded. Under this assumption, the total free energy of the system can be simplified to the sum of the fluid free energy and the elastic strain energy of the solid skeleton. A systematic formulation of the free energy under the incompressibility assumption for the solid phase is provided in Reference \cite{kou2023}.
	\end{remark}
	\section{Energy stable and bounds-preserving numerical method}
	In this section, we first introduce the time semi-discrete scheme for the problem. Following this, we describe the discrete function spaces and propose the fully discrete scheme based on these spaces. Finally, we provide a detailed analysis of the scheme, including proofs of the boundedness of molar density, the convergence of the iterative method, and the energy stability of the system. These results collectively establish the theoretical foundation and numerical robustness of the proposed approach.
	\subsection{Stable time semi-discrete scheme}	
	Let $X^{n+1}$ denote the  approximation of $X$ at time $t = t_{n+1}$, where X represents any of the variables $\phi, \boldsymbol{u}_s, \boldsymbol{u}_f, p, c,$ or  $\mu$. To construct the time semi-discrete scheme, we employ a novel stabilization method \cite{kou2023}, which ensures numerical stability and accuracy in the temporal discretization. The stabilization terms are carefully designed to maintain the physical consistency of the system while improving computational efficiency.
	
	Using the novel stabilization strategy, we define the following stabilized discrete chemical potential
	\begin{align}\label{eq-stable}
		\mu^{n+1} = \mu(c^{n}) + \theta_n R T \zeta^{n+1},
	\end{align}
    where $\theta_n$ is an adaptive stabilization parameter, and its selection criterion is given in Theorem 5.3. $\zeta^{n+1}$	represents the stabilization term, which depends on the discrete approximations of the concentration $c^{n+1}$ at time $t_{n+1}$:
	\begin{align}
		\zeta^{n+1} := \frac{c^{n+1}-c^{n}}{c^{n}(1-\beta c^{n})^2}.
	\end{align}
	This stabilization term is designed to enhance numerical stability, and ensure the preservation of energy dissipation.
	
	\begin{subequations}\label{eq-Semi-1}
		\begin{align}
			& - \nabla \cdot \mathbf{\sigma}(\boldsymbol{u}_{s}^{n+1}, p^{n+1}) = 0,\label{eq-2-2}\\
			& D_{\tau}(\phi^{n+1}c^{n+1}) + \nabla\cdot(\boldsymbol{u}_{f}^{n+1}c^{n}) = 0,\label{eq-2-3}\\
			& \boldsymbol{u}_{f}^{n+1} = -\lambda(\phi^{n}) c^{n} \nabla \mu^{n+1},\label{eq-2-4}\\
			& p^{n+1} = c^{n}\mu^{n+1} - f(c^{n}),\label{eq-2-5}\\
			& D_{\tau}\phi^{n+1} = \frac{1}{N}D_{\tau}p^{n+1} + \alpha \nabla\cdot D_{\tau} \boldsymbol{u}_{s}^{n+1},\label{eq-2-6}
		\end{align}
	\end{subequations}
	where $D_{\tau} X^{n+1} = \frac{X^{n+1} - X^{n}}{\tau}$.

	\subsection{{Full discretization}}
	Let $\mathcal{K}_h$ denote a family of non-degenerate, quasi-uniform triangulations of $\Omega$, with the diameter of an element $K \in \mathcal{K}_h$ denoted by $h_K$. We define the standard finite element space of $d$-vectors whose components are piecewise linear polynomials as follows
	\begin{align*}
		\mathcal{V}_h:=\left\{\psi \in L^2(\Omega)^d:\psi|_K \in \mathbb{P}_1(K)^d, \forall K \in \mathcal{K}_h\right\},
	\end{align*}
	where $\mathbb{P}_1(K)$ represents the space of linear polynomials on the element $K$. The set of all faces (for $d = 3$) or edges (for $d = 2$) of the triangulation $\mathcal{K}_h$ is denoted by $\mathcal{E}_h$, while the set of interior edges or faces is denoted by $\mathcal{E}_h^I$. For two neighboring elements $K_i, K_j \in \mathcal{K}_h$, their common face or edge is denoted by $e = \partial K_i \cap \partial K_j \in \mathcal{E}_h^I$, and the outward unit normal vector $\boldsymbol{n}_e$ is oriented from $K_i$ to $K_j$.
	
	We define the average and jump of a function $\psi \in \mathcal{V}_h$ on an interior edge or face $e$ as follows
	$$
	\{\psi\}:=\frac{1}{2}((\psi|_{K_i})|_e+(\psi|_{K_j})|_e), \quad[\psi]:=\left.\left(\left.\psi\right|_{K_i}\right)\right|_e-\left(\psi|_{K_j}\right)|_e,
	$$
	where $\psi|_{K_i}$ denotes the restriction of $\psi$ to the element $K_i$.
	
	For any domain $D \subseteq \Omega$, and for scalar functions $\vartheta_1, \vartheta_2$ or vector functions $\boldsymbol{\vartheta}_1, \boldsymbol{\vartheta}_2$, we define the inner products as follows
	$$(\vartheta_1, \vartheta_2)_D = \int_D \vartheta_1\vartheta_2 ~d \boldsymbol{x}, \qquad (\boldsymbol{\vartheta}_1, \boldsymbol{\vartheta}_2)_D = \int_D \boldsymbol{\vartheta}_1\cdot \boldsymbol{\vartheta}_2 ~d \boldsymbol{x}.$$
	The $L^2$-norm on a domain $D$ is denoted by $\|\cdot\|_{L^2(D)}$, while the inner product on an edge or face $e \in \mathcal{E}_h$ is denoted by $\langle \cdot, \cdot \rangle_e$, and the associated norm by $\|\cdot\|_{L^2(e)}$.
	
	For dealing with the convection term, we adopt the upwind strategy. The upwind value of $c_h^n$ on an interior edge $e$ is defined as
	\begin{align}
		c^{n*}_{h}= \begin{cases}c^{n}_{h}|_{K_i}, & \boldsymbol{u}_{f,h}^{n} \cdot \boldsymbol{n}_{e} \geq 0, \\ c^{n}_{h}|_{K_j}, & \boldsymbol{u}_{f,h}^{n} \cdot \boldsymbol{n}_{e} <0. \end{cases}
	\end{align}
	Next, we introduce the lowest-order Raviart-Thomas finite element space $RT_0$. On a simplicial mesh, the space $RT_0$ is given by
	$$RT_0 = [\mathbb{P}_0]^d + \boldsymbol{x} \mathbb{P}_0,$$
	where $\mathbb{P}_0$ denotes the piecewise constant space. The corresponding finite element spaces are defined as follows
	\begin{align*}
		\mathcal{U}_h=\left\{\mathbf{v}_h \in H(\operatorname{div}, \Omega):\left.\mathbf{v}_h\right|_K \in R T_0(K), \forall K \in \mathcal{K}_h\right\},
	\end{align*}
	\begin{align*}	
		\mathcal{Q}_h=\left\{q_h \in L^2(\Omega):\left.q_h\right|_K \in \mathbb{P}_0(K), \forall K \in \mathcal{K}_h\right\},
	\end{align*}
	where $H(\operatorname{div}, \Omega)$ is the space of vector fields $\mathbf{v} \in [L^2(\Omega)]^d$ such that $\nabla \cdot \mathbf{v} \in L^2(\Omega)$.
	Finally, we define the subspace $\mathcal{U}_h^0 \subset \mathcal{U}_h$ as
	$$\mathcal{U}^{0}_h = \left\{\mathbf{v}\in\mathcal{U}_h: \mathbf{v}\cdot\boldsymbol{n} = 0 ~on~ \partial\Omega\right\}.$$
	
	For any $\textbf{v}_h \in \mathcal{V}_h, \mathbf{w}_h \in \mathcal{U}^0_h, q_h,z_h,\varphi_h\in \mathcal{Q}_h$, we find $\boldsymbol{u}^{n+1}_{s,h}\in \mathcal{V}_h,\boldsymbol{u}_{f,h}^{n+1}\in \mathcal{U}_h, c^{n+1}_{h},\phi^{n+1}_{h},p^{n+1}_h \in \mathcal{Q}_h $ such that
	\begin{subequations}\label{eq-full-1}
		\begin{align}
			&\mathcal{A}(\boldsymbol{u}^{n+1}_{s,h},  p_h^{n+1},\textbf{v}_h) = 0,  \label{eq-2-2-1}\\
			&(D_{\tau} (\phi^{n+1}_{h}c^{n+1}_{h}), q_h) + 
			\sum_{e\in \mathcal{E}_h^I}\langle c^{n*}_h\boldsymbol{u}_{f,h}^{n+1}\cdot\boldsymbol{n}, [q_h]\rangle_e + \sum_{e\in \mathcal{E}_h^I}\frac{\varsigma_1}{h_e}\langle[\mu^{n+1}_{h}], [q_h] \rangle_e= 0,\label{eq-2-2-2}\\
			&(\lambda^{-1}(\phi_{h}^{n}) \boldsymbol{u}_{f,h}^{n+1},\textbf{w}_h) = \sum_{e\in \mathcal{E}_h^I}\langle [\mu_h^{n+1}],c^{n*}_h\textbf{w}_{h}\cdot\boldsymbol{n}\rangle_e ,\label{eq-2-2-3}\\
			&(p^{n+1}_h, z_h) = (c_h^{n}\mu_h^{n+1} - f(c_h^{n}), z_h),\label{eq-2-2-4}\\
			&(D_{\tau} \phi^{n+1}_h,\varphi_h) = \frac{1}{N}(D_{\tau} p^{n+1}_h, \varphi_h) + \alpha(D_{\tau} (\nabla\cdot \boldsymbol{u}_{s,h}^{n+1}), \varphi_h) -\alpha\sum_{e \in \mathcal{E}_h^I}\langle\{\varphi_h\boldsymbol{n}_e\}, [D_{\tau}\boldsymbol{u}^{n+1}_{s,h}]\rangle_e,\label{eq-2-2-5}
		\end{align}
	\end{subequations}
	where $\mathcal{A}$ is the bilinear form defined as 
	\begin{align}
		\mathcal{A}(\boldsymbol{u}_{s,h}, p_h,\textbf{v}_h):= &\sum_{K \in \mathcal{K}_h}(\mathbf{\sigma}_e(\boldsymbol{u}_{s,h}), \varepsilon(\textbf{v}_h))_K-\sum_{e \in \mathcal{E}^{I}_h}\langle\{\mathbf{\sigma}_e(\boldsymbol{u}_{s,h}) \boldsymbol{n}_e\}, [\textbf{v}_h]\rangle_e \\\nonumber
		&- \alpha\sum_{K \in \mathcal{K}_h}(p_h, \nabla\cdot \mathbf{v}_h)_{K}+\alpha\sum_{e \in \mathcal{E}^{I}_h}\langle\{p_h\boldsymbol{n}_e\}, [\textbf{v}_h]\rangle_e\\\nonumber
		&-\sum_{e \in \mathcal{E}^{I}_h}\langle [\boldsymbol{u}_{s,h}],\left\{\mathbf{\sigma}_{e}(\textbf{v}_h) \boldsymbol{n}_e\right\}\rangle_e+\sum_{e \in \mathcal{E}^{I}_h} \frac{\varsigma_2}{h_e}\langle[\boldsymbol{u}_{s,h}], [\textbf{v}_h]\rangle_e.
	\end{align}
     Here, $\varsigma_1$ and $\varsigma_2$ are penalty parameters to be determined (see the Numerical Experiments section), and the inner product $(\cdot,\cdot) = \sum\limits_{K \in \mathcal{T}_h} (\cdot,\cdot)_K$ denotes the global inner product over the computational mesh $\mathcal{T}_h$, the notation $h_e$ simply means the length of $e$ in 2D and the area of $e$ in 3D.
      
	Since  \eqref{eq-2-2-2}  is nonlinear, we use the linear iteration method to solve the equations \eqref{eq-full-1}: 
	\begin{align}
		&\mathcal{A}(\textbf{u}^{n+1,l+1}_{s,h}, p_{h}^{n+1, l+1}, \textbf{v}_h) = 0,  \label{eq-3-2-1}\\
		&(\frac{\phi^{n+1,l}_{h}c^{n+1,l+1}_{h}-\phi^{n}_{h}c^{n}_{h}}{\tau}, q_h) + 
		\sum_{e\in \mathcal{E}_h^I}\langle c^{n*}_h\textbf{u}_{f,h}^{n+1,l}\cdot\boldsymbol{n}, [q_h]\rangle_e + \sum_{e\in \mathcal{E}_h^I}\frac{\varsigma_1}{h_e}\langle[\mu^{n+1,l+1}_{h}], [q_h] \rangle_e= 0,\label{eq-3-2-2}\\
		&(\lambda^{-1}(\phi_{h}^{n}) \textbf{u}_{f,h}^{n+1,l+1},\textbf{w}_h) = \sum_{e\in \mathcal{E}_h^I}\langle [\mu_h^{n+1,l+1}],c^{n*}_h\textbf{w}_{h}\cdot\boldsymbol{n}\rangle_e ,\label{eq-3-2-3}\\
		&(p^{n+1,l+1}_h, z_h) = (c_h^{n}\mu_h^{n+1,l+1} - f(c_h^{n}), z_h),\label{eq-3-2-4}\\
		&(D_{\tau} \phi^{n+1,l+1}_h,\varphi_h) = \frac{1}{N}(D_{\tau} p^{n+1,l+1}_h, \varphi_h) + \alpha(D_{\tau} (\nabla\cdot \textbf{u}_{s,h}^{n+1,l+1}), \varphi_h)\label{eq-3-2-5}\\\nonumber
		&-\alpha\sum_{e \in \mathcal{E}^{I}_h}\langle\{\varphi_h\boldsymbol{n}_e\}, [D_{\tau}\textbf{u}^{n+1, l+1}_{s,h}]\rangle_e.
	\end{align}
	\section{Bound-preserving Adaptive Time-Stepping strategies}
	Next, we will conduct some theoretical analysis to prove the convergence of the iterative method we adopted, the boundedness of the molar density, and the energy stability. These analyses are crucial for understanding the effectiveness and reliability of our numerical scheme. Specifically, we will firstly prove that the molar density remains bounded throughout the entire computational process. By performing a rigorous analysis of the boundedness of the molar density, we can ensure that, during the numerical solution process, the molar density stays within the physically acceptable range. Then, we demonstrate that during each iteration, the iterative method progressively converges to a stable solution, ensuring that no divergence occurs during the numerical computation.  Finally, we will demonstrate the stability of the proposed numerical scheme in terms of energy conservation. 
	\begin{lemma}\label{le-1}
		For two given real constants $a, b$ and $a < b$, for any $c_h \in \mathcal{Q}_{h}$, we define $c_{h,-} = \min(c_h+a, 0)$, $c_{h,+} = \max(c_h-b, 0)$. We have the following inequalities hold
		\begin{align}
			\sum_{e\in \mathcal{E}^{I}_h}\langle[c_h], [c_{h, -}]\rangle_e \geq \sum_{e\in \mathcal{E}^{I}_h} \langle[c_{h, -}], [c_{h, -}]\rangle_e,\label{eq-108-1}\\
			\sum_{e\in \mathcal{E}^{I}_h}\langle[c_h], [c_{h, +}]\rangle_e \geq \sum_{e\in \mathcal{E}^{I}_h} \langle[c_{h, +}], [c_{h, +}]\rangle_e.\label{eq-108-2}
		\end{align}
	\end{lemma}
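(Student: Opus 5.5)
Since $c_h\in\mathcal{Q}_h$ is piecewise constant, every jump $[c_h]$ on an interior edge $e=\partial K_i\cap\partial K_j$ is a constant, namely $x-y$ with $x=c_h|_{K_i}$ and $y=c_h|_{K_j}$. The same holds for $c_{h,-}$ and $c_{h,+}$, since they are obtained from $c_h$ by applying a scalar function elementwise. Both sums therefore reduce edge by edge to a scalar inequality multiplied by $h_e$. It suffices to show, for all real $x,y$,
\begin{align*}
(x-y)\bigl(g(x)-g(y)\bigr)\geq \bigl(g(x)-g(y)\bigr)^2,
\end{align*}
where $g(s)=\min(s+a,0)$ for \eqref{eq-108-1} and $g(s)=\max(s-b,0)$ for \eqref{eq-108-2}. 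Summing over $e\in\mathcal{E}_h^I$ then gives the lemma. The condition $a<b$ plays no role in this argument.

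The scalar inequality follows because $g$ is nondecreasing and $1$-Lipschitz. For a function $g$ with these two properties and $x\neq y$, the difference quotient $q=(g(x)-g(y))/(x-y)$ lies in $[0,1]$. Hence
\begin{align*}
(x-y)\bigl(g(x)-g(y)\bigr)=q(x-y)^2\geq q^2(x-y)^2=\bigl(g(x)-g(y)\bigr)^2.
\end{align*}
The case $x=y$ is trivial.

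To verify that $g(s)=\min(s+a,0)$ has these properties, note that it is the composition of the translation $s\mapsto s+a$ with $t\mapsto\min(t,0)$, which is monotone with slope $0$ or $1$. Similarly, $\max(s-b,0)$ is the composition of $s\mapsto s-b$ with $t\mapsto \max(t,0)$. If an explicit check is preferred, split into cases according to whether $x$ and $y$ lie above or below the threshold. When both lie on the sloped side, both sides of the inequality equal $(x-y)^2$. When both lie on the flat side, both sides vanish. In the mixed case, say $x+a<0\leq y+a$ for the first inequality, one has $g(x)-g(y)=x+a$, and the inequality becomes $(x-y)(x+a)\geq (x+a)^2$. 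This is equivalent to $(x+a)\bigl(-(y+a)\bigr)\geq 0$, which holds since $x+a<0$ and $y+a\geq 0$.

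No step is a real obstacle. The only point needing care is noting that the edge inner products of piecewise constants reduce to pointwise products of jumps, so that the scalar monotonicity argument applies directly.
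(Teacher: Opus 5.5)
Your proof is correct, but it argues differently from the paper. The paper never isolates a scalar inequality. It replaces $c_h$ by $c_h+a$ inside the jump, which is harmless since $[c_h+a]=[c_h]$. It then expands $\langle[c_h+a],[c_{h,-}]\rangle_e$ into the four products of traces from $K$ and $K'$, and compares these term by term with the expansion of $\langle[c_{h,-}],[c_{h,-}]\rangle_e$. The diagonal terms agree because $(c_h+a)\,c_{h,-}=c_{h,-}^2$: the parts $\min(c_h+a,0)$ and $\max(c_h+a,0)$ have disjoint support. Each cross term obeys $-(c_h|_K+a)\,c_{h,-}|_{K'}\ge -c_{h,-}|_K\,c_{h,-}|_{K'}$, because $c_h+a\ge c_{h,-}$ and $c_{h,-}\le 0$.

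You instead identify the structural reason the lemma holds. The truncation $g$ is nondecreasing and $1$-Lipschitz, so its difference quotient lies in $[0,1]$, and $(x-y)(g(x)-g(y))\ge (g(x)-g(y))^2$ follows at once. The paper's version is purely algebraic, with no case split or difference quotient, but it is tied to the specific form of a truncation at zero. Yours applies verbatim to any nondecreasing $1$-Lipschitz map of $c_h$, and it makes evident that the hypothesis $a<b$ is never used. Since your scalar inequality holds pointwise on each edge, it also does not need $c_h$ to be piecewise constant.
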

	\begin{proof}
		By the definition $c_{h,-}$, we deduce that																																								
		\begin{align}
			&\sum_{e\in \mathcal{E}^{I}_h}\langle[c_h], [c_{h, -}]\rangle_e \\\nonumber
			&= \sum_{e\in \mathcal{E}^{I}_h}\left(\langle c_h|_{K}+a, c_{h, -}|_{K}\rangle_e - \langle c_h|_{K}+a, c_{h, -}|_{K'}\rangle_e\right.\\\nonumber
			&\left.\quad- \langle c_h|_{K'}+a, c_{h, -}|_{K}\rangle_e + \langle c_h|_{K'}+a, c_{h, -}|_{K'}\rangle_e\right)\\\nonumber
			&\geq \sum_{e\in \mathcal{E}^{I}_h}\left(\langle c_{h, -}|_{K}, c_{h, -}|_{K}\rangle_e - \langle c_{h, -}|_{K}, c_{h, -}|_{K'}\rangle_e\right.\\\nonumber
			&\left.\quad- \langle c_{h, -}|_{K'}, c_{h, -}|_{K}\rangle_e + \langle c_{h, -}|_{K'}, c_{h, -}|_{K'}\rangle_e\right)\\\nonumber
			& = \sum_{e\in \mathcal{E}^{I}_h}\langle[c_{h, -}], [c_{h, -}]\rangle_e,
		\end{align}
where $K'$ is the neighboring element that shares the edge (or face) $e$ with the element $K$.
		Similarly, we can also derive \eqref{eq-108-2}.
	\end{proof}
	\begin{lemma}\label{theo-1}
		Assume that $0 <\varrho_0 \leq  \beta c^n_h \leq \varrho< 1$ and the boundary condition \eqref{eq-BD-1} holds. For $n \geq 0$ and given constants $0 < \delta_1 < 1$ and $0 < \delta_2 < 1$, if the time step size $\tau^{l}_n$ satisfies
		\begin{align}\label{eq-tau}
			\tau^{l}_n = \min_{K\in\mathcal{K}_h}\left(\tau_1, \tau_2, \tau_{max}\right),
		\end{align}
		where 
		$$\tau_1 := \frac{\left(\phi^{n+1,l}_{h} c^{n}_{h}\left(1-\beta c^{n}_{h}\right)^2\delta_1 - (\phi^{n+1,l}_{h}-\phi^{n}_{h})c^{n}_{h}\right)|K|}{\sum\limits_{e\in \partial K_{\boldsymbol{u}_{f,h}}^{+}} c^{n}_h\boldsymbol{u}_{f,h}^{n+1, l}\cdot\boldsymbol{n}|e| + \sum\limits_{e\in \partial K_{\mu}^{+}}\frac{\varsigma_1}{h_e}[\mu(c^{n}_{h})]|e|+ \epsilon},$$\\
		$$\tau_2 := \frac{\left(\phi^{n+1,l}_{h} c^{n}_{h}\left(1-\beta c^{n}_{h}\right)^2\delta_2 + (\phi^{n+1,l}_{h}-\phi^{n}_{h})c^{n}_{h}\right)|K|}{-\left(\sum\limits_{e\in \partial K_{\boldsymbol{u}_{f,h}}^{-}} c^{n*}_h\boldsymbol{u}_{f,h}^{n+1, l}\cdot\boldsymbol{n}|e| + \sum\limits_{e\in \partial K_{\mu}^{-}}\frac{\varsigma_1}{h_e}[\mu(c^{n}_{h})]|e|\right)+ \epsilon},$$
		$\epsilon > 0$ is a very small constant to avoid zero denominator, $\tau_{max}$ is the allowed maximum time step size to
		guarantee the accuracy of numerical solutions and $\partial K_{\boldsymbol{u}_{f,h}}^{+} = \{e \in \partial K: \boldsymbol{u}_{f,h}^{n+1}\cdot\boldsymbol{n}|_e > 0, \forall K \in \mathcal{K}_{h}\}$, $\partial K_{\boldsymbol{u}_{f,h}}^{-} = \{e\in\partial K: \boldsymbol{u}_{f,h}^{n+1}\cdot\boldsymbol{n}|_e < 0, \forall K \in \mathcal{K}_{h}\}$, $\partial K_{\mu}^{-} = \{e\in\partial K: [\mu(c^{n}_{h})] < 0, \forall K \in \mathcal{K}_{h}\}, \partial K_{\mu}^{+} = \{e\in\partial K: [\mu(c^{n}_{h})] > 0, \forall K \in \mathcal{K}_{h}\}$. Then $c^{n+1, l+1}_h$ satisfies
		$$
		0<(1 - \delta_1\left(1-\beta c^{n}_{h}\right)^{2})c^{n}_{h}\leq c_{h}^{n+1,l+1} \leq (1 + \delta_2\left(1 - \beta c^{n}_{h}\right)^{2})c^{n}_{h}<\frac{1}{\beta}.
		$$
	\end{lemma}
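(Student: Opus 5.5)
The plan is to prove the two-sided bound by a discrete maximum-principle argument, in the spirit of Lemma \ref{le-1}. The iteration \eqref{eq-3-2-2} is linear in $c^{n+1,l+1}_h$. The velocity $\boldsymbol{u}_{f,h}^{n+1,l}$ and the porosity $\phi^{n+1,l}_h$ are frozen at the previous iterate. The only implicit coupling between elements is the penalty term in $[\mu^{n+1,l+1}_h]$.

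First we substitute \eqref{eq-stable}, i.e. $\mu^{n+1,l+1}_h=\mu(c^n_h)+\theta_n RT\zeta^{n+1,l+1}$ with $\zeta^{n+1,l+1}=(c^{n+1,l+1}_h-c^n_h)/(c^n_h(1-\beta c^n_h)^2)$. The penalty term then splits into an explicit part $\frac{\varsigma_1}{h_e}\langle[\mu(c^n_h)],[q_h]\rangle_e$ and an implicit part $\frac{\varsigma_1}{h_e}\theta_nRT\langle[\zeta^{n+1,l+1}],[q_h]\rangle_e$. The implicit part is a monotone (M-matrix type) diffusion in the variable $\zeta$.

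Next we rewrite the target bound in terms of $\zeta$. Dividing by $c^n_h(1-\beta c^n_h)^2>0$, the claimed inequality $(1-\delta_1(1-\beta c^n_h)^2)c^n_h\le c^{n+1,l+1}_h\le(1+\delta_2(1-\beta c^n_h)^2)c^n_h$ is exactly $-\delta_1\le \zeta^{n+1,l+1}\le\delta_2$ elementwise. The outer inequalities are automatic: $\delta_1<1$ and $(1-\beta c^n_h)^2<1$ give positivity. For the upper end we need $(1+\delta_2(1-\beta c^n_h)^2)\beta c^n_h<1$. Writing $x=\beta c^n_h\in[\varrho_0,\varrho]$, this is $x+\delta_2x(1-x)^2<1$, i.e. $\delta_2x(1-x)<1$, which holds because $x(1-x)\le 1/4$.

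Then we take $q_h=\zeta_-:=\min(\zeta^{n+1,l+1}+\delta_1,0)$ in \eqref{eq-3-2-2}, and write $\phi^{n+1,l}_hc^{n+1,l+1}_h-\phi^n_hc^n_h=\phi^{n+1,l}_h c^n_h(1-\beta c^n_h)^2\zeta+(\phi^{n+1,l}_h-\phi^n_h)c^n_h$. By Lemma \ref{le-1} with $a=\delta_1$, the implicit penalty term is bounded below by $\frac{\varsigma_1}{h_e}\theta_nRT\|[\zeta_-]\|^2_{L^2(e)}\ge0$.

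The main obstacle is controlling the explicit flux and penalty terms. They involve $[q_h]$ and not just $q_h|_K$, so they do not sign-decompose globally. I would therefore argue elementwise, at the element $K$ where $\zeta$ attains its minimum; because $\mathcal{Q}_h$ is piecewise constant, the test function can be taken to be the indicator of $K$. Suppose $\zeta|_K<-\delta_1$. The implicit diffusion contributes $\sum_e\frac{\varsigma_1}{h_e}\theta_nRT(\zeta_K-\zeta_{K'})|e|\le0$. Hence
\begin{align*}
\bigl(\phi^{n+1,l}_hc^n_h(1-\beta c^n_h)^2\zeta_K+(\phi^{n+1,l}_h-\phi^n_h)c^n_h\bigr)|K|\ge-\tau\Bigl(\sum_{e\in\partial K^{+}_{\boldsymbol{u}_{f,h}}}c^n_h\boldsymbol{u}^{n+1,l}_{f,h}\cdot\boldsymbol{n}|e|+\sum_{e\in\partial K^{+}_\mu}\frac{\varsigma_1}{h_e}[\mu(c^n_h)]|e|\Bigr).
\end{align*}
Here we drop the inflow and negative-jump contributions, which have the favorable sign, and we use the upwind value $c^n_h|_K$ on outflow faces. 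With $\tau\le\tau_1$, the right side is at least $-(\phi^{n+1,l}_hc^n_h(1-\beta c^n_h)^2\delta_1-(\phi^{n+1,l}_h-\phi^n_h)c^n_h)|K|$. This yields $\zeta_K\ge-\delta_1$, a contradiction.

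The upper bound is symmetric. We take the element where $\zeta$ is maximal, use Lemma \ref{le-1} with $b=\delta_2$ (i.e. the test function $\max(\zeta-\delta_2,0)$), keep only the inflow faces $\partial K^-_{\boldsymbol{u}_{f,h}}$ with upwind value $c^{n*}_h$ and the negative jumps $\partial K^-_\mu$, and invoke $\tau\le\tau_2$. The $\epsilon$ in the denominators only decreases $\tau_1,\tau_2$, so it is harmless. Care is needed with the orientation sign convention of $[\cdot]$ relative to $\boldsymbol{n}_e$, which is exactly what the definitions of $\partial K^\pm$ encode.
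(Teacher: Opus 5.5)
Your argument is correct, but its decisive step differs from the paper's.

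\textbf{What the paper does.} The paper keeps the global truncations $\zeta_{h,-}^{n+1,l+1}=\min(\zeta_h^{n+1,l+1}+\delta_1,0)$ and $\zeta_{h,+}^{n+1,l+1}=\max(\zeta_h^{n+1,l+1}-\delta_2,0)$ as test functions in \eqref{eq-3-2-2}. Lemma \ref{le-1} bounds the implicit penalty term below by $\theta_nRT\sum_{e}\frac{\varsigma_1}{h_e}\|[\zeta_{h,\mp}^{n+1,l+1}]\|^2_{L^2(e)}$. The explicit flux and $[\mu(c_h^n)]$ terms are handled by summation by parts. Because $q_h$ is piecewise constant, $\sum_{e\in\mathcal{E}_h^I}\langle g,[q_h]\rangle_e=\sum_{K}q_h|_K\sum_{e\subset\partial K}(\cdots)$ with $K$-outward orientation. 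Each element therefore contributes $\zeta_{h,-}|_K$ times a bracket that is $\le 0$ once $\tau\le\tau_1$ on $K$. What remains is $\frac{1}{\tau}(\phi_h^{n+1,l}c_h^n(1-\beta c_h^n)^2\zeta_{h,-},\zeta_{h,-})+\theta_nRT\sum_e\frac{\varsigma_1}{h_e}\|[\zeta_{h,-}]\|^2_{L^2(e)}\le 0$, so $\zeta_{h,-}\equiv0$, and symmetrically for $\zeta_{h,+}$. Your remark that the explicit terms ``do not sign-decompose globally'' is therefore not accurate: they do, element by element, and that is the heart of the paper's proof.

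\textbf{What you do instead.} You test with the indicator of the element where $\zeta_h^{n+1,l+1}$ is extremal, which is a discrete maximum principle. The implicit two-point diffusion term gets its sign from extremality alone ($\zeta_K-\zeta_{K'}\le0$ at a minimum, $\ge0$ at a maximum). So Lemma \ref{le-1} is not needed at all, and the time-step bound is invoked on a single element.

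\textbf{Comparison.} Your route is more elementary and gives the pointwise bound directly, exploiting the M-matrix structure of the piecewise-constant two-point penalty. The paper's Stampacchia-type argument is the energy version of the same fact and controls the overshoot in the $L^2$ and jump norms.

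\textbf{Two points to fix.} First, the passages in your proposal invoking Lemma \ref{le-1} and the test functions $\zeta_\pm$ play no role in your final argument; drop them rather than mixing the two proofs. Second, your step ``with $\tau\le\tau_1$ the right side is at least \ldots'' needs the numerators of $\tau_1$ and $\tau_2$ to be positive. That is exactly the ratio conditions \eqref{eq-1012-2} and \eqref{eq-1012-3} on $\phi_h^{n+1,l}/\phi_h^n$, which also give $\phi_h^{n+1,l}>0$, needed to divide by the coefficient of $\zeta_K$. The paper states these as explicit assumptions; you use them only tacitly through $\tau>0$, and you should say so.
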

	\begin{proof} 
		
		Let us define $\zeta_{h, -}^{n+1, l+1} = \min(\zeta_{h}^{n+1, l+1} + \delta_1, 0)$ and $\zeta_{h, +}^{n+1, l+1} = \max(\zeta_{h}^{n+1, l+1} - \delta_2, 0)$. Obviously $\zeta_{h, -}^{n+1, l+1} \leq 0 $ and $\zeta_{h, +}^{n+1, l+1} \geq 0$.
		
		Taking $q_h = \zeta_{h, -}^{n+1,l+1}$ in \eqref{eq-3-2-2} and using \eqref{eq-stable}, we can obtain
		\begin{align}\label{eq-616-1}
			&\frac{1}{\tau^{l}_n}\left(\phi^{n+1,l}_{h}(c_h^{n+1,l+1}-c^{n}_{h}), \zeta_{h, -}^{n+1, l+1}\right) + \frac{1}{\tau^{l}_n}\left((\phi^{n+1,l}_{h}-\phi^{n}_{h})c^{n}_{h}, \zeta_{h, -}^{n+1, l+1}\right)\\\nonumber
			&\quad+ \sum_{e\in \mathcal{E}^{I}_h} \langle c^{n*}_h\boldsymbol{u}_{f,h}^{n+1, l}\cdot\boldsymbol{n}, [\zeta_{h, -}^{n+1, l+1}]\rangle_{e} + \sum_{e\in \mathcal{E}^{I}_h}\frac{\varsigma_1}{h_e}\langle[\mu(c^{n}_{h})], [\zeta_{h, -}^{n+1, l+1}]\rangle_e\\\nonumber
			&\quad + \theta_n R T \sum_{e\in \mathcal{E}^{I}_h}\frac{\varsigma_1}{h_e}\langle[\zeta_{h}^{n+1, l+1}], [\zeta_{h, -}^{n+1, l+1}]\rangle_e = 0.
		\end{align}
		In terms of the definition of $\zeta_{h, -}^{n+1, l+1}$, we deduce that
		\begin{align}\label{eq-616-2}
			\left(\phi^{n+1,l}_{h}(c_h^{n+1,l+1}-c^{n}_{h}), \zeta_{h, -}^{n+1, l+1}\right) = & \left(\phi^{n+1,l}_{h} c^{n}_{h}\left(1-\beta c^{n}_{h}\right)^{2} \zeta_{h}^{n+1,l+1},  \zeta_{h, -}^{n+1, l+1}\right) \\\nonumber
			= & \left(\phi^{n+1,l}_{h} c^{n}_{h}\left(1-\beta c^{n}_{h}\right)^{2}\left(\zeta_{h}^{n+1,l+1}+\delta_1\right), \zeta_{h, -}^{n+1, l+1}\right)\\\nonumber
			& -\left(\phi^{n+1,l}_{h} c^{n}_{h}\left(1-\beta c^{n}_{h}\right)^{2} \delta_1, \zeta_{h, -}^{n+1, l+1}\right)\\\nonumber
			= & \left(\phi^{n+1,l}_{h} c^{n}_{h}\left(1-\beta c^{n}_{h}\right)^{2}  \zeta_{h, -}^{n+1, l+1}, \zeta_{h, -}^{n+1, l+1}\right)\\\nonumber
			& - \left(\phi^{n+1,l}_{h} c^{n}_{h}\left(1-\beta c^{n}_{h}\right)^{2}  \delta_1,  \zeta_{h, -}^{n+1, l+1}\right).
		\end{align}
		Taking into account \eqref{eq-tau}, $\zeta_{h, -}^{n+1,l+1} \leq 0$, and Lemma \ref{le-1},  we can get  
		
		\begin{align}\label{eq-616-3}
			&  \sum_{K\in \mathcal{K}_h }\sum_{e\in \partial K }\langle c^{n*}_h\boldsymbol{u}_{f,h}^{n+1, l}\cdot\boldsymbol{n}, \zeta_{h, -}^{n+1, l+1}\rangle_{e} -\frac{1}{\tau^{l}_n}\sum_{K\in \mathcal{K}_h }\left(\phi^{n+1, l}_{h} c^{n}_{h}\left(1-\beta c^{n}_{h}\right)^{2} \delta_1, \zeta_{h, -}^{n+1, l+1}\right) \\\nonumber
			&\quad + \frac{1}{\tau^{l}_n}\sum_{K\in \mathcal{K}_h }\left((\phi^{n+1,l}_{h}-\phi^{n}_{h})c^{n}_{h}, \zeta_{h, -}^{n+1, l+1}\right) + \sum_{K\in \mathcal{K}_h }\sum_{e\in \partial K }\frac{\varsigma_1}{h_e}\langle[\mu(c^{n}_{h})], \zeta_{h, -}^{n+1, l+1}\rangle_e\\\nonumber
			& =\frac{1}{\tau^{l}_n}\sum_{K\in \mathcal{K}_h }\left((\phi^{n+1,l}_{h}-\phi^{n}_{h})c^{n}_{h}, \zeta_{h, -}^{n+1, l+1}\right) - \frac{1}{\tau^{l}_n}\sum_{K\in \mathcal{K}_h }\left(\phi^{n+1, l}_{h} c^{n}_{h}\left(1-\beta c^{n}_{h}\right)^{2} \delta_1, \zeta_{h, -}^{n+1, l+1}\right) \\\nonumber
			&\quad+ \sum_{K\in \mathcal{K}_h }\sum_{e\in \partial K_{\boldsymbol{u}_{f,h}}^{-}}\langle c^{n*}_h\boldsymbol{u}_{f,h}^{n+1, l}\cdot\boldsymbol{n}, \zeta_{h, -}^{n+1, l+1}\rangle_{e} + \sum_{K\in \mathcal{K}_h }\sum_{e\in \partial K_{\boldsymbol{u}_{f,h}}^{+}}\langle c^{n*}_h\boldsymbol{u}_{f,h}^{n+1, l}\cdot\boldsymbol{n}, \zeta_{h, -}^{n+1, l+1}\rangle_{e}\\\nonumber
			&\quad + \sum_{K\in \mathcal{K}_h }\sum_{e\in \partial K_{\mu}^{-}}\frac{\varsigma_1}{h_e}\langle[\mu(c^{n}_{h})], \zeta_{h, -}^{n+1, l+1}\rangle_e + \sum_{K\in \mathcal{K}_h }\sum_{e\in \partial K_{\mu}^{+}}\frac{\varsigma_1}{h_e}\langle[\mu(c^{n}_{h})], \zeta_{h, -}^{n+1, l+1}\rangle_e\\\nonumber
			& =\sum_{K\in \mathcal{K}_h }\frac{\left((\phi^{n+1,l}_{h}-\phi^{n}_{h})c^{n}_{h}-\phi^{n+1,l}_{h} c^{n}_{h}\left(1-\beta c^{n}_{h}\right)^2\delta_1\right) \zeta_{h, -}^{n+1, l+1}|K|}{\tau^{l}_n} \\\nonumber
			&\quad +  \sum_{K\in \mathcal{K}_h }\sum_{e\in \partial K_{\boldsymbol{u}_{f,h}}^{-}}c^{n*}_h\boldsymbol{u}_{f,h}^{n+1, l}\cdot\boldsymbol{n} \zeta_{h, -}^{n+1, l+1}|e| +  \sum_{K\in \mathcal{K}_h }\sum_{e\in \partial K_{\boldsymbol{u}_{f,h}}^{+}} c^{n}_h\boldsymbol{u}_{f,h}^{n+1, l}\cdot\boldsymbol{n}\zeta_{h, -}^{n+1, l+1}|e|\\\nonumber
			&\quad +  \sum_{K\in \mathcal{K}_h }\sum_{e\in \partial K_{\mu}^{-}}\frac{\varsigma_1}{h_e}[\mu(c^{n}_{h})]\zeta_{h, -}^{n+1, l+1}|e| +  \sum_{K\in \mathcal{K}_h }\sum_{e\in \partial K_{\mu}^{+}}\frac{\varsigma_1}{h_e}[\mu(c^{n}_{h})]\zeta_{h, -}^{n+1, l+1}|e|\geq 0.
		\end{align}
		
		Here we assume that $\phi^{n+1,l}_{h}\left(1-\beta c^{n}_{h}\right)^2\delta_1 > (\phi^{n+1,l}_{h}-\phi^{n}_{h})$, which means 
		\begin{align}\label{eq-1012-2}
			\frac{\phi^{n+1,l}_{h}}{\phi^{n}_{h}}<\frac{1}{1-\left(1-\beta c^{n}_{h}\right)^2\delta_1}.
		\end{align}
		Combining \eqref{eq-616-1}-\eqref{eq-616-3}, we get
		\begin{align}
			\frac{1}{\tau^{l}_n}\left(\phi^{n+1,l}_{h} c^{n}_{h}\left(1-\beta c^{n}_{h}\right)^{2}  \zeta_{h, -}^{n+1, l+1}, \zeta_{h, -}^{n+1, l+1}\right) + \theta_n R T\sum_{e\in \mathcal{E}^{I}_h}\frac{\varsigma_1}{h_e}\langle[\zeta_{h, -}^{n+1, l+1}], [\zeta_{h, -}^{n+1, l+1}]\rangle_e \leq 0.
		\end{align}
		Due to $0< c^{n}_{h} < \frac{1}{\beta} $, we obtain
		\begin{align}
			\zeta_{h, -}^{n+1, l+1} \equiv 0,
		\end{align}
		and
		\begin{align}
			c_h^{n+1, l+1} \geq (1 - \delta_1\left(1-\beta c^{n}_{h}\right)^{2})c^{n}_{h}>0.
		\end{align}
		Taking $q_h = \zeta_{h, +}^{n+1,l+1}$ in \eqref{eq-3-2-2}, we can obtain
		\begin{align}
			\frac{1}{\tau^{l}_n}\left(\phi^{n+1,l}_{h}(c_h^{n+1,l+1}-c^{n}_{h}), \zeta_{h, +}^{n+1,l+1}\right) + \frac{1}{\tau^{l}_n}\left((\phi^{n+1,l}_{h}-\phi^{n}_{h})c^{n}_{h}, \zeta_{h, +}^{n+1,l+1}\right)\\\nonumber
			+ \sum_{e\in \mathcal{E}^{I}_h} \langle c^{n*}_h\boldsymbol{u}_{f,h}^{n+1, l}\cdot\boldsymbol{n}, \zeta_{h, +}^{n+1,l+1}\rangle_{e} + \sum_{e\in \mathcal{E}^{I}_h}\frac{\varsigma_1}{h_e}\langle[\mu^{n+1}_{h}], [\zeta_{h, +}^{n+1,l+1}]\rangle_e= 0.
		\end{align}
		In terms of the definition of $\zeta_{h, +}^{n+1, l+1}$, we deduce that
		\begin{align}\label{eq-617-1}
			\left(\phi^{n+1,l}_{h}(c_h^{n+1,l+1}-c^{n}_{h}), \zeta_{h, +}^{n+1, l+1}\right) = & \left(\phi^{n+1,l}_{h} c^{n}_{h}\left(1-\beta c^{n}_{h}\right)^{2} \zeta_{h}^{n+1,l+1},  \zeta_{h, +}^{n+1, l+1}\right) \\\nonumber
			= & \left(\phi^{n+1,l}_{h} c^{n}_{h}\left(1-\beta c^{n}_{h}\right)^{2}\left(\zeta_{h}^{n+1,l+1}- \delta_2\right), \zeta_{h, +}^{n+1, l+1}\right)\\\nonumber
			& + \left(\phi^{n+1,l}_{h} c^{n}_{h}\left(1-\beta c^{n}_{h}\right)^{2} \delta_2, \zeta_{h, +}^{n+1, l+1}\right)\\\nonumber
			= & \left(\phi^{n+1,l}_{h} c^{n}_{h}\left(1-\beta c^{n}_{h}\right)^{2}  \zeta_{h, +}^{n+1, l+1}, \zeta_{h, +}^{n+1, l+1}\right)\\\nonumber
			& + \left(\phi^{n+1,l}_{h} c^{n}_{h}\left(1-\beta c^{n}_{h}\right)^{2}  \delta_2,  \zeta_{h, +}^{n+1, l+1}\right).
		\end{align}
		By \eqref{eq-tau}, $\zeta_{h, +}^{n+1, l+1} \geq 0$ and Lemma \ref{le-1}, we get
		\begin{align}\label{eq-617-2}
			&  \sum_{K\in \mathcal{K}_h }\sum_{e\in \partial K }\langle c^{n*}_h\boldsymbol{u}_{f,h}^{n+1, l}\cdot\boldsymbol{n}, \zeta_{h, +}^{n+1, l+1}\rangle_{e} + \frac{1}{\tau^{l}_n}\sum_{K\in \mathcal{K}_h }\left(\phi^{n+1, l}_{h} c^{n}_{h}\left(1-\beta c^{n}_{h}\right)^{2} \delta_2, \zeta_{h, +}^{n+1, l+1}\right) \\\nonumber
			&\quad + \frac{1}{\tau^{l}_n}\sum_{K\in \mathcal{K}_h }\left((\phi^{n+1,l}_{h}-\phi^{n}_{h})c^{n}_{h}, \zeta_{h, +}^{n+1, l+1}\right) + \sum_{K\in \mathcal{K}_h }\sum_{e\in \partial K }\frac{\varsigma_1}{h_e}\langle[\mu(c^{n}_{h})], \zeta_{h, +}^{n+1, l+1}\rangle_e\\\nonumber
			& =\frac{1}{\tau^{l}_n}\sum_{K\in \mathcal{K}_h }\left((\phi^{n+1,l}_{h}-\phi^{n}_{h})c^{n}_{h}, \zeta_{h, +}^{n+1, l+1}\right) + \frac{1}{\tau^{l}_n}\sum_{K\in \mathcal{K}_h }\left(\phi^{n+1, l}_{h} c^{n}_{h}\left(1-\beta c^{n}_{h}\right)^{2} \delta_2, \zeta_{h, +}^{n+1, l+1}\right) \\\nonumber
			&\quad+ \sum_{K\in \mathcal{K}_h }\sum_{e\in \partial K_{\boldsymbol{u}_{f,h}}^{-}}\langle c^{n*}_h\boldsymbol{u}_{f,h}^{n+1, l}\cdot\boldsymbol{n}, \zeta_{h, +}^{n+1, l+1}\rangle_{e} + \sum_{K\in \mathcal{K}_h }\sum_{e\in \partial K_{\boldsymbol{u}_{f,h}}^{+}}\langle 	c^{n*}_h\boldsymbol{u}_{f,h}^{n+1, l}\cdot\boldsymbol{n}, \zeta_{h, +}^{n+1, l+1}\rangle_{e}\\\nonumber
			&\quad + \sum_{K\in \mathcal{K}_h }\sum_{e\in \partial K_{\mu}^{-}}\frac{\varsigma_1}{h_e}\langle[\mu(c^{n}_{h})], \zeta_{h, +}^{n+1, l+1}\rangle_e + \sum_{K\in \mathcal{K}_h }\sum_{e\in \partial K_{\mu}^{+}}\frac{\varsigma_1}{h_e}\langle[\mu(c^{n}_{h})], \zeta_{h, +}^{n+1, l+1}\rangle_e\\\nonumber
			& =\sum_{K\in \mathcal{K}_h }\frac{\left((\phi^{n+1,l}_{h}-\phi^{n}_{h})c^{n}_{h}+\phi^{n+1,l}_{h} c^{n}_{h}\left(1-\beta c^{n}_{h}\right)^{2}\delta_2\right) \zeta_{h, +}^{n+1, l+1}|K|}{\tau^{l}_n} \\\nonumber
			&\quad +  \sum_{K\in \mathcal{K}_h }\sum_{e\in \partial K_{\boldsymbol{u}_{f,h}}^{-}}c^{n*}_h\boldsymbol{u}_{f,h}^{n+1, l}\cdot\boldsymbol{n} \zeta_{h, +}^{n+1, l+1}|e| +  \sum_{K\in \mathcal{K}_h }\sum_{e\in \partial K_{\boldsymbol{u}_{f,h}}^{+}} c^{n}_h\boldsymbol{u}_{f,h}^{n+1, l}\cdot\boldsymbol{n}\zeta_{h, +}^{n+1, l+1}|e|\\\nonumber
			&\quad +  \sum_{K\in \mathcal{K}_h }\sum_{e\in \partial K_{\mu}^{-}}\frac{\varsigma_1}{h_e}[\mu(c^{n}_{h})]\zeta_{h, +}^{n+1, l+1}|e| +  \sum_{K\in \mathcal{K}_h }\sum_{e\in \partial K_{\mu}^{+}}\frac{\varsigma_1}{h_e}[\mu(c^{n}_{h})]\zeta_{h, +}^{n+1, l+1}|e|\geq 0.
		\end{align}
		Here we need $\phi^{n+1,l}_{h}\left(1-\beta c^{n}_{h}\right)^2\delta_2 > (\phi^{n}_{h} - \phi^{n+1,l}_{h})$, which means 
		\begin{align}\label{eq-1012-3}
			\frac{\phi^{n+1,l}_{h}}{\phi^{n}_{h}} > \frac{1}{(1 + \left(1-\beta c^{n}_{h}\right)^2\delta_2)}.
		\end{align}
		
		Due to  \eqref{eq-617-1}-\eqref{eq-617-2}, we can obtain
		\begin{align}
			\left(\phi^{n+1,l}_{h} c^{n}_{h}\left(1-\beta c^{n}_{h}\right)^{2}\zeta_{h, +}^{n+1, l+1},\zeta_{h, +}^{n+1, l+1}\right) + \theta_n R T\sum_{e\in \mathcal{E}^{I}_h}\langle[\zeta_{h, +}^{n+1, l+1}], [\zeta_{h, +}^{n+1, l+1}]\rangle_e\leq 0,
		\end{align}	 
		which directly yields
		$$
		\zeta_{h, +}^{n+1, l+1} = 0.  
		$$
		Then, we can get
		\begin{align}
			c_h^{n+1, l+1} \leq (1 + \delta_2\left(1-\beta c^{n}_{h}\right)^{2})c^{n}_{h}.
		\end{align}
		By $0 < c^{n}_{h} < \frac{1}{\beta}$ and $0 < \delta_2 < 1$, we deduce that
		\begin{align}
			\beta c_h^{n+1, l+1}& = (1 + \delta_2\left(1-\beta c^{n}_{h}\right)^{2})\beta c^{n}_{h}\\\nonumber
			& = \beta c^{n}_{h}  + \delta_2\left(1-\beta c^{n}_{h}\right)^{2}\beta c^{n}_{h}\\\nonumber  
			& < \beta c^{n}_{h} + 1 - \beta c^{n}_{h} = 1.
		\end{align}
		The proof is completed. 
	\end{proof}
	\begin{rem}
		In the above proof, we use the following inequality
		$$\frac{1}{(1 + \left(1-\beta c^{n}_{h}\right)^2\delta_2)}< \frac{\phi^{n+1,l}_{h}}{\phi^{n}_{h}} <\frac{1}{1-\left(1-\beta c^{n}_{h}\right)^2\delta_1}, $$ 
		when $l = 0$, we can get $\phi^{n+1,0} = \phi^{n}$, the above assumption is obviously true. Next we will prove that when $0 \leq c^{n+1,l}_{h} \leq 1$ is established, the appeal assumption holds.
	\end{rem}
	\begin{theorem}
		According to Lemma~\ref{theo-1}, we assume that the inequality $0 < \varrho_0 \leq \beta c^n_h \leq \varrho < 1$ holds, where $\varrho_0$ and $\varrho$ are positive constants. Furthermore, let the parameters $\gamma$ and $\varsigma_1$ be chosen sufficiently large. Under these assumptions, the function $\phi_{h}^{n+1, l+1}$ is bounded by
		$$
		0 < \phi_{h}^{n} - C_{\epsilon} \leq \phi_{ h}^{n+1, l+1} \leq \phi_{h}^{n} + C_{\epsilon} < 1,
		$$
		where the parameter $C_{\epsilon}$ satisfies the condition
		\begin{align}
			C_{\epsilon} < \min\left(\frac{\left(1-\varrho\right)^2\delta_2C_{\phi, min}}{(1 + \left(1-\varrho\right)^2\delta_2)}, \frac{\left(1-\varrho\right)^2\delta_1C_{\phi, min}}{(1 - \left(1-\varrho\right)^2\delta_1)},  1-C_{\phi, max}\right).
		\end{align}
		Here, $C_{\phi, min}$ and $C_{\phi, max}$  are the maximum and minimum values of $\phi^{n}$ respectively.
	\end{theorem}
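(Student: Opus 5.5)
The plan is to bound the porosity increment $\phi_h^{n+1,l+1}-\phi_h^n$ directly from the discrete porosity equation \eqref{eq-3-2-5} and then check that the resulting window is strictly inside the ratio condition of the Remark following Lemma~\ref{theo-1}. Since $\varphi_h\in\mathcal{Q}_h$ is piecewise constant, testing \eqref{eq-3-2-5} with the indicator of an element $K$ and multiplying by $\tau$ gives the elementwise identity
\begin{align*}
\phi_h^{n+1,l+1}-\phi_h^n=\frac{1}{N}\bigl(p_h^{n+1,l+1}-p_h^n\bigr)+\frac{\alpha}{|K|}\Bigl(\int_K\nabla\cdot(\boldsymbol{u}_{s,h}^{n+1,l+1}-\boldsymbol{u}_{s,h}^n)\,d\boldsymbol{x}-\sum_{e\subset\partial K\cap\mathcal{E}_h^I}\tfrac12\int_e\boldsymbol{n}_e\cdot[\boldsymbol{u}_{s,h}^{n+1,l+1}-\boldsymbol{u}_{s,h}^n]\,ds\Bigr).
\end{align*}
It then suffices to show each term on the right is small, uniformly in $K$, and to call the total bound $C_\epsilon$.

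For the pressure term I would use \eqref{eq-3-2-4} together with \eqref{eq-stable}: $p_h^{n+1,l+1}=c_h^n\mu(c_h^n)-f(c_h^n)+\theta_nRT\,c_h^n\zeta_h^{n+1,l+1}$, and similarly for $p_h^n$. Lemma~\ref{theo-1} gives $|\zeta_h^{n+1,l+1}|\le\max(\delta_1,\delta_2)$. The assumption $\varrho_0\le\beta c_h^n\le\varrho$ keeps $\mu$, $f$ and $p(c)$ from \eqref{eq-pres-peng} bounded and Lipschitz on the relevant range. Hence $|p_h^{n+1,l+1}-p_h^n|\le C$ with $C$ independent of the mesh, and $N^{-1}$ times this is small once $N$ is large, or after restricting $\delta_1,\delta_2$. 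For the displacement term I would subtract \eqref{eq-3-2-1} at levels $n+1,l+1$ and $n$, then test with $\mathbf{v}_h=\boldsymbol{u}_{s,h}^{n+1,l+1}-\boldsymbol{u}_{s,h}^n$. Coercivity of the SIPG part of $\mathcal{A}$ for $\varsigma_2$ large, combined with a DG inf-sup/trace estimate, gives
\begin{align*}
2\eta\|\varepsilon(\delta\boldsymbol{u})\|^2+\gamma\|\nabla\cdot\delta\boldsymbol{u}\|^2+\text{jumps}\lesssim\alpha\,\|\delta p\|\,\|\nabla\cdot\delta\boldsymbol{u}\|_{DG},
\end{align*}
so $\|\nabla\cdot\delta\boldsymbol{u}\|_{DG}\lesssim\alpha\|\delta p\|/\gamma$. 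This is where the hypothesis ``$\gamma$ sufficiently large'' enters: the divergence contribution to the porosity change is $O(\alpha^2/\gamma)$ times a bounded pressure jump. The hypothesis ``$\varsigma_1$ large'' will control the interior-penalty part that feeds into the pressure through $\mu$.

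Collecting terms yields $|\phi_h^{n+1,l+1}-\phi_h^n|\le C_\epsilon$ with $C_\epsilon\to0$ as $\gamma\to\infty$. Requiring $C_\epsilon<1-C_{\phi,\max}$ gives the upper bound $<1$. The first two entries of the minimum are exactly what makes
\begin{align*}
\frac{1}{1+(1-\beta c_h^n)^2\delta_2}<1-\frac{C_\epsilon}{\phi_h^n}\le\frac{\phi_h^{n+1,l+1}}{\phi_h^n}\le1+\frac{C_\epsilon}{\phi_h^n}<\frac{1}{1-(1-\beta c_h^n)^2\delta_1}.
\end{align*}
To see this, use $\phi_h^n\ge C_{\phi,\min}$ and the monotonicity of both bounds in $(1-\beta c_h^n)^2\ge(1-\varrho)^2$. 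In particular the first entry forces $C_\epsilon<C_{\phi,\min}$, so positivity follows. This closes the induction over $l$ needed in Lemma~\ref{theo-1}.

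The main obstacle is passing from the global $L^2$ energy estimate on $\delta\boldsymbol{u}$ to a pointwise (elementwise) bound on the divergence and jump terms. I would first try an inverse inequality on the quasi-uniform mesh, accepting an $h$-dependent constant absorbed by taking $\gamma$ and $\varsigma_1$ large relative to it. If that proves too weak, I would bound the elementwise quantity via local DG stability on each patch.
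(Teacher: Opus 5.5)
Your proposal is correct at the paper's level of rigor and follows essentially the same route as the paper: the energy estimate for $\boldsymbol{u}_{s,h}^{n+1,l+1}-\boldsymbol{u}_{s,h}^{n}$ from the difference of \eqref{eq-3-2-1}, then a mesh-independent bound on the pressure increment, then the porosity equation \eqref{eq-3-2-5} with an inverse inequality whose $h$-dependent constant is absorbed by taking the parameters large (the paper's final condition carries the factor $C_N$), and finally the same check of the ratio window from the Remark after Lemma~\ref{theo-1}, followed by induction in $l$. One correction: the penalty that must be large to control the jump contribution $[\boldsymbol{u}_{s,h}^{n+1,l+1}-\boldsymbol{u}_{s,h}^{n}]$ is $\varsigma_2$, as in the paper's proof (the $\varsigma_1$ in the statement looks like a misprint), so the role you speculate for $\varsigma_1$ is not needed; on the other hand, your remark that the $\frac{1}{N}$ pressure term needs $N$ large is a point the paper keeps in \eqref{eq-1018-1} but silently drops from its final condition, and shrinking $\delta_1,\delta_2$ does not help there, since $C_\epsilon$ scales with them too.
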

	\begin{proof}
	We begin by considering the difference:
	$\mathcal{A}(\mathbf{u}^{n+1,l+1}_{s,h}, p_{h}^{n+1, l+1}, \mathbf{v}_h) - \mathcal{A}(\mathbf{u}^{n}_{s,h}, p_{h}^{n}, \mathbf{v}_h)$ = 0.
	Then, by selecting $\mathbf{v}_h = \mathbf{u}_{s,h}^{n+1,l+1} - \mathbf{u}_{s,h}^{n}$ as the test function, we obtain:
		\begin{align}\label{eq-20251110-0}
			&2\eta\sum_{K \in \mathcal{K}_h} \|\varepsilon(\boldsymbol{u}_{s,h}^{n+1,l+1} - \boldsymbol{u}_{s,h}^{n}) \|^{2}_{L^2(K)} + \gamma\sum_{K \in \mathcal{K}_h} \|\nabla\cdot (\boldsymbol{u}_{s,h}^{n+1,l+1} - \boldsymbol{u}_{s,h}^{n})\|^{2}_{L^2(K)}\\\nonumber
			&\quad+\sum_{e \in \mathcal{E}^{I}_h}\frac{\varsigma_2}{h_e}\|[\boldsymbol{u}_{s,h}^{n+1,l+1} - \boldsymbol{u}_{s,h}^{n}]\|^{2}_{L^2(e)}
			-2\sum_{e \in \mathcal{E}^{I}_h}\langle\{\mathbf{\sigma}_e(\boldsymbol{u}_{s,h}^{n+1,l+1} - \boldsymbol{u}_{s,h}^{n}) \boldsymbol{n}_e\}, [\boldsymbol{u}_{s,h}^{n+1,l+1} - \boldsymbol{u}_{s,h}^{n}]\rangle_e 
			\\\nonumber
			&\quad- \alpha\sum_{K \in \mathcal{K}_h}(p_{h}^{n+1,l+1} - p_{h}^{n}, \nabla\cdot (\boldsymbol{u}_{s,h}^{n+1,l+1} - \boldsymbol{u}_{s,h}^{n}))_{K}\\\nonumber
			&\quad+\alpha\sum_{e \in\mathcal{E}^{I}_h}\langle\{(p_{h}^{n+1,l+1} - p_{h}^{n})\boldsymbol{n}_e\}, [\boldsymbol{u}_{s,h}^{n+1,l+1} - \boldsymbol{u}_{s,h}^{n}]\rangle_e \\\nonumber
			&= 0, 
	\end{align}
    Due to the trace inequality, Cauchy-Schwarz inequality and the Young's inequality, we get
	\begin{align}\label{eq-20251110-1}
			&2\sum_{e \in \mathcal{E}^{I}_h}\langle\{\mathbf{\sigma}_e(\boldsymbol{u}_{s,h}^{n+1,l+1} - \boldsymbol{u}_{s,h}^{n}) \boldsymbol{n}_e\}, [\boldsymbol{u}_{s,h}^{n+1,l+1} - \boldsymbol{u}_{s,h}^{n}]\rangle_e\\\nonumber
			&\leq 2\sum_{e \in \mathcal{E}^{I}_h}\|\{\mathbf{\sigma}_e(\boldsymbol{u}_{s,h}^{n+1,l+1} - \boldsymbol{u}_{s,h}^{n}) \boldsymbol{n}_e\}\|_{L^{2}(e)}\|[\boldsymbol{u}_{s,h}^{n+1,l+1} - \boldsymbol{u}_{s,h}^{n}]\|_{L^{2}(e)}\\\nonumber
			&\leq 2(\sum_{e \in \mathcal{E}^{I}_h}\|\{\mathbf{\sigma}_e(\boldsymbol{u}_{s,h}^{n+1,l+1} - \boldsymbol{u}_{s,h}^{n}) \boldsymbol{n}_e\}\|^{2}_{L^{2}(e)})^{\frac{1}{2}}(\sum_{e \in \mathcal{E}^{I}_h}\|[\boldsymbol{u}_{s,h}^{n+1,l+1} - \boldsymbol{u}_{s,h}^{n}]\|^{2}_{L^{2}(e)})^{\frac{1}{2}}\\\nonumber 
			&\leq 2(\frac{C}{2h_e}\sum_{K \in \mathcal{K}_h}(\mathbf{\sigma}_e(\boldsymbol{u}_{s,h}^{n+1,l+1} - \boldsymbol{u}_{s,h}^{n}), \epsilon(\boldsymbol{u}_{s,h}^{n+1,l+1} - \boldsymbol{u}_{s,h}^{n}))_{K})^{\frac{1}{2}}(\sum_{e \in \mathcal{E}^{I}_h}\|[\boldsymbol{u}_{s,h}^{n+1,l+1} - \boldsymbol{u}_{s,h}^{n}]\|^{2}_{L^{2}(e)})^{\frac{1}{2}}\\\nonumber
			&\leq \frac{1}{2}\sum_{K \in \mathcal{K}_h}(\mathbf{\sigma}_e(\boldsymbol{u}_{s,h}^{n+1,l+1} - \boldsymbol{u}_{s,h}^{n}), \epsilon(\boldsymbol{u}_{s,h}^{n+1,l+1} - \boldsymbol{u}_{s,h}^{n}))_{K} + \frac{C}{h_e}\sum_{e \in \mathcal{E}^{I}_h}\|[\boldsymbol{u}_{s,h}^{n+1,l+1} - \boldsymbol{u}_{s,h}^{n}]\|^{2}_{L^{2}(e)}, 
	\end{align}
    \begin{align}\label{eq-20251110-2}
	&\alpha\sum_{e \in\mathcal{E}^{I}_h}\langle\{(p_{h}^{n+1,l+1} - p_{h}^{n})\boldsymbol{n}_e\}, [\boldsymbol{u}_{s,h}^{n+1,l+1} - \boldsymbol{u}_{s,h}^{n}]\rangle_e\\\nonumber
	&\leq \alpha\sum_{e \in\mathcal{E}^{I}_h}\|\{(p_{h}^{n+1,l+1} - p_{h}^{n})\boldsymbol{n}_e\}\|_{L^{2}(e)}\|[\boldsymbol{u}_{s,h}^{n+1,l+1} - \boldsymbol{u}_{s,h}^{n}]\|_{L^{2}(e)}\\\nonumber
	&\leq \alpha(\sum_{e \in\mathcal{E}^{I}_h}\|\{(p_{h}^{n+1,l+1} - p_{h}^{n})\boldsymbol{n}_e\}\|^{2}_{L^{2}(e)})^{\frac{1}{2}}(\sum_{e \in \mathcal{E}^{I}_h}\|[\boldsymbol{u}_{s,h}^{n+1,l+1} - \boldsymbol{u}_{s,h}^{n}]\|^{2}_{L^{2}(e)})^{\frac{1}{2}}\\\nonumber
	&\leq \alpha(\frac{C}{2h_e}\sum_{K \in \mathcal{K}_h}\|p_{h}^{n+1,l+1} - p_{h}^{n}\|^{2}_{L^{2}(K)})^{\frac{1}{2}}(\sum_{e \in \mathcal{E}^{I}_h}\|[\boldsymbol{u}_{s,h}^{n+1,l+1} - \boldsymbol{u}_{s,h}^{n}]\|^{2}_{L^{2}(e)})^{\frac{1}{2}}\\\nonumber
	&\leq \frac{\alpha}{2}\sum_{K \in \mathcal{K}_h}\|p_{h}^{n+1,l+1} - p_{h}^{n}\|^{2}_{L^{2}(K)} + \frac{\alpha C}{h_e}\sum_{e \in \mathcal{E}^{I}_h}\|[\boldsymbol{u}_{s,h}^{n+1,l+1} - \boldsymbol{u}_{s,h}^{n}]\|^{2}_{L^{2}(e)},
\end{align}
and
\begin{align}\label{eq-20251110-3}
	&\alpha\sum_{K \in \mathcal{K}_h}(p_{h}^{n+1,l+1} - p_{h}^{n}, \nabla\cdot (\boldsymbol{u}_{s,h}^{n+1,l+1} - \boldsymbol{u}_{s,h}^{n}))_{K}\\\nonumber
	&\leq \frac{\alpha}{2}\sum_{K \in \mathcal{K}_h}\|p_{h}^{n+1,l+1} - p_{h}^{n}\|^{2}_{L^{2}(K)} + \frac{\alpha}{2}\sum_{K \in \mathcal{K}_h}\|\nabla\cdot (\boldsymbol{u}_{s,h}^{n+1,l+1} - \boldsymbol{u}_{s,h}^{n})\|^{2}_{L^{2}(K)}.
\end{align}
		Here we have used the following trace inequality (cf. \cite{Schwab1998}).
		\begin{align}
			\left\|\mathbf{\sigma}_e(\mathbf{v}_h) \boldsymbol{n}\right\|_{L^2\left(e\right)}^2 \leq C h_e^{-1} \int_{K}(\mathbf{\sigma}_e(\mathbf{v}_h): \varepsilon(\mathbf{v}_h)) d \boldsymbol{x},\quad  \forall \mathbf{v}_h \in \mathbb{P}_r\left(K \right)^d,
		\end{align}
		where $ \mathbb{P}_r$ is the $r$th-order polynomial space.
		
		By substituting formulas \eqref{eq-20251110-1}, \eqref{eq-20251110-2}, and \eqref{eq-20251110-3} into \eqref{eq-20251110-0}, we obtain
		\begin{align}
			&\eta\sum_{K \in \mathcal{K}_h} \|\varepsilon(\boldsymbol{u}_{s,h}^{n+1,l+1} - \boldsymbol{u}_{s,h}^{n})\|^{2}_{L^2(K)} + \frac{\gamma -\alpha}{2}\sum_{K \in \mathcal{K}_h} \|\nabla\cdot (\boldsymbol{u}_{s,h}^{n+1,l+1} - \boldsymbol{u}_{s,h}^{n})\|^{2}_{L^2(K)}\label{eq-615-1} \\\nonumber
			&\quad+(\varsigma_2-(1+\alpha)C)\sum_{e \in \mathcal{E}^{I}_h}\frac{1}{h_e}\|[\boldsymbol{u}_{s,h}^{n+1,l+1} - \boldsymbol{u}_{s,h}^{n}]\|^{2}_{L^2(e)}\\\nonumber
			&\leq \alpha\sum_{K \in \mathcal{K}_h}\|p_{h}^{n+1,l+1} - p_{h}^{n}\|^{2}_{L^2(K)}.
		\end{align}
		
		Due to \eqref{eq-615-1}, we get
		\begin{align}
			\sum_{K \in \mathcal{K}_h} \|\nabla\cdot \boldsymbol{u}_{s,h}^{n+1,l+1}-\boldsymbol{u}_{s,h}^{n}\|^{2}_{L^2(K)}\leq \frac{2\alpha}{\gamma-\alpha}\sum_{K \in \mathcal{K}_h}\|p_{h}^{n+1,l+1}- p_{h}^{n}\|^{2}_{L^2(K)},\label{eq-615-2}\\
			\sum_{e \in \mathcal{E}^{I}_h}\frac{1}{h_e}\|[\boldsymbol{u}_{s,h}^{n+1,l+1} - \boldsymbol{u}_{s,h}^{n}]\|^{2}_{L^2(e)}\leq	\frac{\alpha}{\varsigma_2-(1+\alpha)C}\sum_{K \in \mathcal{K}_h}\|p_{h}^{n+1,l+1} - p_{h}^{n}\|^{2}_{L^2(K)}.\label{eq-615-3}
		\end{align}
    In equation \eqref{eq-3-2-4}, we take the test function $q_h$ to be 1 on the element $K$ and 0 on all other elements, we get
		\begin{align}\label{eq-p-1}
			p_h^{n+1,l+1} &= c_h^{n}\mu_h^{n+1,l+1} - f(c_h^{n}) \\\nonumber
			&= \frac{RTc_h^{n+1,l+1}}{1-\beta c_h^{n}}+\frac{bc_h^{n}}{2\sqrt{2}\beta}\left(\frac{(1-\sqrt{2})\beta c_h^{n+1,l+1}}{1+(1-\sqrt{2})\beta c_h^{n}}-\frac{(1+\sqrt{2})\beta c_h^{n}}{1+(1+\sqrt{2})\beta c_h^{n}}\right).
		\end{align} 
		Due to \eqref{eq-p-1} and $0<c_h^n$, $c_h^{n+1,l+1}<\frac{1}{ \beta}$, we get
		\begin{align}\label{eq-p-max}
			\|p_h^{n+1,l+1}\|_{\infty} &\leq \frac{RT}{1-\varrho} + \frac{b}{2\sqrt{2}}\left(\frac{(\sqrt{2}-1)}{1 + (1-\sqrt{2})} + (1+\sqrt{2})\right)\\\nonumber
			&= \frac{RT}{1-\varrho} + \frac{(2\sqrt{2}-1)b}{4\sqrt{2} - 1}:= C_p,
		\end{align}
	where $\|\cdot\|_{\infty} = \max\limits_{K \in \mathcal{K}_h}|\cdot| $.
	
		Letting $\varphi_h = \phi_h^{n+1,l+1}-\phi_h^{n}$, we obtain 
		\begin{align}\label{eq-1017-1}
			\sum_{K \in \mathcal{K}_h}\|\phi_h^{n+1,l+1}-\phi_h^{n}\|^{2}_{L^2(K)}& = \frac{1}{N}\sum_{K \in \mathcal{K}_h}(p_h^{n+1,l+1} - p_h^{n}, \phi_h^{n+1,l+1}-\phi_h^{n})_{K}\\\nonumber 
			&\quad + \alpha \sum_{K \in \mathcal{K}_h}(\nabla\cdot \textbf{u}_{s,h}^{n+1,l+1}-\nabla\cdot \textbf{u}_{s,h}^{n}, \phi_h^{n+1,l+1}-\phi_h^{n})_{K} \\\nonumber
			&\quad -\alpha\sum_{e \in \partial K}\langle[\textbf{u}^{n+1,l+1}_{s,h}], \{\phi_h^{n+1,l+1}-\phi_h^{n}\}\mathbf{n}_e \rangle_e  \\\nonumber
			&\quad + \alpha\sum_{e \in \partial K}\langle [\textbf{u}^{n}_{s,h}], \{\phi_h^{n+1,l+1}-\phi_h^{n}\}\mathbf{n}_e \rangle_e.
		\end{align}
		By \eqref{eq-1017-1}, we get
		\begin{align}
			&\sum_{K \in \mathcal{K}_h}\|\phi_h^{n+1,l+1}-\phi_h^{n}\|^{2}_{L^2(K)} \\\nonumber
			&\leq \frac{1}{N}\left(\sum_{K \in \mathcal{K}_h}\|p_{h}^{n+1,l+1} - p_{h}^{n}\|^{2}_{L^2(K)}\right)^{\frac{1}{2}}\left(\sum_{K \in \mathcal{K}_h}\|\phi_h^{n+1,l+1}-\phi_h^{n}\|^{2}_{L^2(K)}\right)^{\frac{1}{2}}\\\nonumber
			&\quad +\alpha \left(\sum_{K \in \mathcal{K}_h}\|\nabla\cdot \textbf{u}_{s,h}^{n+1,l+1} - \nabla\cdot \textbf{u}_{s,h}^{n}\|^{2}_{L^2(K)}\right)^{\frac{1}{2}}\left(\sum_{K \in \mathcal{K}_h}\|\phi_h^{n+1,l+1}-\phi_h^{n}\|^{2}_{L^2(K)}\right)^{\frac{1}{2}}\\\nonumber
			&\quad + \frac{\alpha C}{2}\left(\sum_{e \in \partial K}h^{-1}_{e}\|[\textbf{u}^{n+1,l+1}_{s,h} - \textbf{u}^{n}_{s,h}]\|^{2}_{L^2(e)}\right)^{\frac{1}{2}}\left(\sum_{K \in 	\mathcal{K}_h}\|\phi_h^{n+1,l+1}-\phi_h^{n}\|^{2}_{L^2(K)}\right)^{\frac{1}{2}}.
		\end{align}
		Eliminating the same terms on both sides of the above inequality, we further obtain
		\begin{align}\label{eq-1023-1}
			&\left(\sum_{K \in \mathcal{K}_h}\|\phi_h^{n+1,l+1}-\phi_h^{n}\|^{2}_{L^2(K)}\right)^{\frac{1}{2}}\\\nonumber
			&\leq  \frac{1}{N}\left(\sum_{K \in \mathcal{K}_h}\|p_{h}^{n+1,l+1} - p_{h}^{n}\|^{2}_{L^2(K)}\right)^{\frac{1}{2}} +\alpha \left(\sum_{K \in \mathcal{K}_h}\|\nabla\cdot (\textbf{u}_{s,h}^{n+1,l+1} - \textbf{u}_{s,h}^{n})\|^{2}_{L^2(K)}\right)^{\frac{1}{2}}\\\nonumber
			&\quad + \frac{\alpha C}{2}\left(\sum_{e \in \partial K}h^{-1}_{e}\|[\textbf{u}^{n+1,l+1}_{s,h} - \textbf{u}^{n}_{s,h}]\|^{2}_{L^2(e)}\right)^{\frac{1}{2}}.
		\end{align}
		Due to the inverse inequality, \eqref{eq-615-2},\eqref{eq-615-3}, \eqref{eq-p-max} and \eqref{eq-1023-1}, we can get
		\begin{align}\label{eq-1018-1}
			& \max_{K \in \mathcal{K}_h} \|\phi_h^{n+1,l+1}-\phi_h^{n}\|_{L^\infty(K)}=\|\phi_h^{n+1,l+1}-\phi_h^{n}\|_{L^\infty(K_0)} \\\nonumber
			&\leq  \frac{C}{h^{\frac{d}{2}}}\|\phi_h^{n+1,l+1}-\phi_h^{n}\|_{L^2(K_0)}\\\nonumber
			&\leq \frac{C C_{x}}{h^{\frac{d}{2} } \sqrt{C_N}}\left(\sum\limits_{K \in \mathcal{K}_h}\|\phi_h^{n+1,l+1}-\phi_h^{n}\|^{2}_{L^2(K)} \right)^{\frac{1}{2}}\\\nonumber
			&\leq \frac{C}{h^{\frac{d}{2}}}\frac{C_{x}}{\sqrt{C_N}}\left(\frac{2\alpha^{2}}{\gamma-\alpha} + \frac{\alpha C}{2} \frac{\alpha}{\varsigma_2-(1+\alpha)C}+\frac{1}{N}\right) \left(\sum_{K \in \mathcal{K}_h}\|p_{h}^{n+1,l+1} - p_{h}^{n}\|^{2}_{L^2(K)}\right)^{\frac{1}{2}} \\\nonumber
			&\leq C C_{x}\left(\frac{2\alpha^{2}}{\gamma-\alpha} + \frac{\alpha C}{2} \frac{\alpha}{\varsigma_2-(1+\alpha)C}+\frac{1}{N}\right)C_p,
		\end{align}
		where $K_0$ is the unit with the maximum value of $\|\phi_h^{n+1,l+1}-\phi_h^{n}\|$, $C_N$ is the number of elements of $\mathcal{K}_h$ and $C_{x} \leq C_N$.
		
		By \eqref{eq-1012-2} and \eqref{eq-1012-3}, we can define $C_{\epsilon}$  to satisfy
		
		\begin{align}
			C_{\epsilon} < \min\left(\left(1 - \frac{1}{(1 + \left(1-\beta c^{n}_{h}\right)^2\delta_2)}\right)\phi^{n}_{h}, \left(\frac{1}{(1 - \left(1-\beta c^{n}_{h}\right)^2\delta_1)}-1\right)\phi^{n}_{h},  1-\phi^{n}_{h}\right).
		\end{align}
		Due to the assumption $\varrho_0 \leq  \beta c^{n}_{h} \leq \varrho$, $C_{\phi, min} \leq  \phi^{n}_{h} \leq C_{\phi, max}$, we further get
		\begin{align}
			C_{\epsilon} < \min\left(\frac{\left(1-\varrho\right)^2\delta_2C_{\phi, min}}{(1 + \left(1-\varrho\right)^2\delta_2)}, \frac{\left(1-\varrho\right)^2\delta_1C_{\phi, min}}{(1 - \left(1-\varrho\right)^2\delta_1)},  1-C_{\phi, max}\right).
		\end{align}
		Due to the definition of $C_\epsilon$ and \eqref{eq-1018-1}, we can get that $\gamma$ and $\varsigma_2$ should be sufficiently large to ensure that the following inequality holds, 
		\begin{align}
			C(\frac{\alpha^{2}}{2\gamma} + \frac{\alpha C}{2} \frac{\alpha}{\varsigma_2-(1+\alpha)C})C_pC_N \leq C_{\epsilon}.
		\end{align}
		Enumerating  all $l = 0,1,2,\cdots L$  in the aforementioned proof, we can prove $ 0 < \phi^{n+1,l+1}_h < 1 $ and $0<c^{n+1,l+1}_{h}<\frac{1}{\beta}$, for any $l= 0,1,2,\cdots, L$, where $L$ is the number of iterations. Now we complete the proof.
		
	\end{proof}
	\section{Convergence of the iterative method and energy stability}
	Next, we will use the Banach fixed-point theorem to prove the convergence of the above iterative method. 
	We now introduce a mapping $\mathcal{F}_h$ such that $c_{h}^{n+1,l+1}$ = $\mathcal{F}_h(c_{h}^{n+1,l})$, where $c_{h}^{n+1,l+1}$ along with $\textbf{u}_{s,h}^{n+1,l+1}$ $p_{h}^{n+1, l+1}$, $\textbf{u}_{f,h}^{n+1, l+1}$, $\phi^{n+1,l+1}_{h}$, $\mu_{h}^{n+1,l+1}$ solves the equations \eqref{eq-3-2-1}-\eqref{eq-3-2-5}.
	To prove $\mathcal{F}_h$ is a contraction map, we introduce the following iterative error equation.
	Let $e_{c_{h}}^{n+1,l+1}$, $e_{p_{h}}^{n+1,l+1}$, $e_{\mu_{h}}^{n+1,l+1}$, $e_{\boldsymbol{u}_{s,h}}^{n+1,l+1}$, $e_{\boldsymbol{u}_{f,h}}^{n+1,l+1}$ be the corresponding difference between the $l$-th layer and $(l+1)$-th layer solutions of \eqref{eq-3-2-1}-\eqref{eq-3-2-5}, which satisfy the following variational problem: find $(e_{c_{h}}^{n+1,l+1}, e_{p_{h}}^{n+1,l+1}, e_{\mu_{h}}^{n+1,l+1}, e_{\boldsymbol{u}_{s,h}}^{n+1,l+1}, e_{\boldsymbol{u}_{f,h}}^{n+1,l+1}) \in \mathcal{Q}_h\times\mathcal{Q}_h\times\mathcal{Q}_h\times\mathcal{V}_h\times\mathcal{U}_h^{0}$ there hold
	\begin{subequations}
		\begin{align}
			&\mathcal{A}(e_{\boldsymbol{u}_{s,h}}^{n+1,l+1}, e_{p_{h}}^{n+1,l+1}, \textbf{v}_h) = 0,\label{eq-4-3-6}\\
			&(\frac{e_{\phi_{h}}^{n+1, l}c^{n+1,l}_{h}+\phi_{h}^{n+1,l}e_{c_{h}}^{n+1,l+1}}{\tau}, q_h) + 
			\sum_{e\in \mathcal{E}_h^I}\langle c^{n*}_he_{\boldsymbol{u}_{f, h}}^{n+1, l}\cdot\boldsymbol{n}_e, [q_h]\rangle_e\label{eq-4-3-7}\\\nonumber
			&\qquad\qquad\qquad\qquad\qquad+ \sum_{e\in \mathcal{E}_h^I}\frac{\varsigma_1}{h_e}\langle [e_{\mu_{h}}^{n+1,l+1}], [q_h]\rangle_e  = 0, \\	
			&(\lambda^{-1}(\phi_{h}^{n}) e_{\boldsymbol{u}_{f, h}}^{n+1, l+1}, \boldsymbol{w}_{h}) = \sum_{e\in \mathcal{E}_h^I}\langle [e_{\mu_{h}}^{n+1,l+1}],c^{n*}_h \boldsymbol{w}_{h}\cdot\boldsymbol{n}_e\rangle_e ,\label{eq-4-3-8}\\
			&(e_{\phi_{ h}}^{n+1, l+1}, \varphi_h) = \frac{1}{N}(e_{p_{h}}^{n+1,l+1}, \varphi_h) + \alpha (\nabla\cdot e_{\boldsymbol{u}_{s, h}}^{n+1, l+1}, \varphi_h)-\alpha \sum_{e \in \mathcal{E}^{I}_h}\langle\{\varphi_h\boldsymbol{n}_e\}, [e_{\boldsymbol{u}_{s,h}}^{n+1,l+1}]\rangle_e,\label{eq-4-33-9}
		\end{align}
	\end{subequations}
	where 
	\begin{align}
		&e_{\mu_{h}}^{n+1,l+1} = \frac{\theta_n R T}{ c_h^n(1-\beta c_h^n)^2}e_{c_{h}}^{n+1,l+1},\label{eq-4-3-mu}\\
		&e_{p_{h}}^{n+1,l+1} = c_h^{n}e_{\mu_{h}}^{n+1,l+1}.\label{eq-4-3-p}
	\end{align}
	\begin{lemma}\label{lem-4-c}
		Assume that $0 < \varrho_0 \leq  \beta c^n_h \leq \varrho < 1$ and the parameters $\gamma, \varsigma_1, \varsigma_2$ and $N$ are chosen to be large enough,  we can get $\mathcal{F}_h: c^{n+1,l}_h\rightarrow c^{n+1,l+1}_h$ is a contraction, i.e., there exists a constant $C_{con} \in (0, 1)$ such that
		\begin{align}
			&\frac{3C_{\phi, min}}{4}\sum_{K \in \mathcal{K}_h}\|\mathcal{F}_h(c_{h}^{n+1,l}) - \mathcal{F}_h(c_{h}^{n+1,l-1})\|^{2}_{L^2(K)}\\\nonumber
			&+ (\varsigma_1C_{\mu, min} - \frac{\varrho^{2} C}{\beta^{2} C_{\lambda}})\sum_{e\in \mathcal{E}_h^I}\frac{1}{h_e}\|[\mathcal{F}_h(c_{h}^{n+1,l}) - \mathcal{F}_h(c_{h}^{n+1,l-1})]\|^{2}_{L^2(e)} \\\nonumber
			&< C_{con}\left(\frac{3C_{\phi, min}}{4}\sum_{K \in \mathcal{K}_h}\|c_{h}^{n+1,l} - c_{h}^{n+1,l-1}\|^{2}_{L^2(K)}
			\right.\\\nonumber
			&  \left.+ (\varsigma_1C_{\mu, min} - \frac{\varrho^{2} C}{\beta^{2} C_{\lambda}})\sum_{e\in \mathcal{E}_h^I}\frac{1}{h_e}\|[c_{h}^{n+1,l} - c_{h}^{n+1,l-1}]\|^{2}_{L^2(e)}\right),
		\end{align}
		where $C_{\lambda}$ is the lower bound of $\lambda^{-1}(\phi^{n})$, $C_{\mu, min}$ and $C_{\mu, max}$ are the upper and lower bounds of $\frac{\theta_n R T}{ c_h^n(1-\beta c_h^n)^2}$, respectively.
	\end{lemma}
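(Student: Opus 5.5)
The plan is an energy-type estimate on the iterative error equations \eqref{eq-4-3-6}--\eqref{eq-4-33-9}, tested with the concentration error itself. The goal is to bound the weighted norm of $e_c^{l+1}:=e_{c_h}^{n+1,l+1}=\mathcal{F}_h(c_h^{n+1,l})-\mathcal{F}_h(c_h^{n+1,l-1})$ by a small multiple of the same norm of $e_c^{l}:=c_h^{n+1,l}-c_h^{n+1,l-1}$.

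\textbf{Step 1 (test the mass equation).} I take $q_h=e_c^{l+1}$ in \eqref{eq-4-3-7} and use \eqref{eq-4-3-mu}. Since $\frac{\theta_nRT}{c_h^n(1-\beta c_h^n)^2}$ is piecewise constant and bounded below by $C_{\mu,min}$, an argument in the spirit of Lemma \ref{le-1} (comparing jumps of a positively scaled piecewise constant) should give
\[
\sum_{e\in\mathcal{E}_h^I}\frac{\varsigma_1}{h_e}\langle[e_{\mu_h}^{n+1,l+1}],[e_c^{l+1}]\rangle_e\gtrsim \varsigma_1C_{\mu,min}\sum_{e\in\mathcal{E}_h^I}\frac{1}{h_e}\|[e_c^{l+1}]\|^2_{L^2(e)},
\]
up to a term involving jumps of the coefficient. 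I expect that term to be absorbable when $\varsigma_1$ is large and $c_h^n$ is smooth enough; I will treat it as a perturbation. By Lemma \ref{theo-1} and the subsequent theorem, $\phi_h^{n+1,l}\ge C_{\phi,min}-C_\epsilon$, so the time-derivative term dominates $\frac{3C_{\phi,min}}{4\tau}\|e_c^{l+1}\|^2$ when $C_\epsilon\le C_{\phi,min}/4$. All remaining terms on the left involve level-$l$ quantities and are moved to the right-hand side.

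\textbf{Step 2 (bound the lagged terms).} There are two such terms.
\begin{itemize}
\item The flux term $\sum\langle c_h^{n*}e_{\boldsymbol u_f}^{l}\cdot\boldsymbol n,[e_c^{l+1}]\rangle$. I test \eqref{eq-4-3-8} at level $l$ with $\boldsymbol w_h=e_{\boldsymbol u_f}^{l}$ and use $\lambda^{-1}\ge C_\lambda$, $c_h^{n*}\le\varrho/\beta$, the discrete trace inequality for $RT_0$, and Young. This bounds $\|e_{\boldsymbol u_f}^l\|$ by a multiple of $C_{\mu,max}\frac{\varrho}{\beta C_\lambda}(\sum h_e^{-1}\|[e_c^l]\|^2)^{1/2}$. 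The flux term then splits into $\frac{\varrho^2C}{\beta^2C_\lambda}\sum h_e^{-1}\|[e_c^{l+1}]\|^2$, which is moved left and accounts for the coefficient $\varsigma_1C_{\mu,min}-\frac{\varrho^2C}{\beta^2C_\lambda}$, plus a multiple of $\sum h_e^{-1}\|[e_c^{l}]\|^2$ on the right.
\item The porosity term $(e_\phi^{l}c_h^{n+1,l},e_c^{l+1})/\tau$. I repeat the argument of the previous theorem in error form: test \eqref{eq-4-3-6} with $e_{\boldsymbol u_s}^l$ and use the DG coercivity bounds \eqref{eq-615-2}--\eqref{eq-615-3} with $p$ replaced by $e_p^l$. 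Then \eqref{eq-4-33-9} gives
\[
\|e_\phi^l\|\le\Big(\tfrac1N+\tfrac{C\alpha}{\sqrt{\gamma-\alpha}}+\tfrac{C\alpha}{\sqrt{\varsigma_2}}\Big)\|e_p^l\|,
\]
and by \eqref{eq-4-3-p}, $\|e_p^l\|\le\frac{\varrho}{\beta}C_{\mu,max}\|e_c^l\|$. Young's inequality then produces $\frac{C_{\phi,min}}{4\tau}\|e_c^{l+1}\|^2$, which is absorbed on the left, plus a small multiple of $\|e_c^l\|^2$.
\end{itemize}

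\textbf{Step 3 (conclude).} After multiplying by $\tau$, the right-hand side is bounded by $\kappa$ times the full weighted norm of $e_c^l$. Here $\kappa$ is a combination of $1/N$, $1/(\gamma-\alpha)$, $1/\varsigma_2$ and $1/\varsigma_1$ (the last entering through the ratio of the flux contribution to $\varsigma_1C_{\mu,min}$). Choosing $\gamma,\varsigma_1,\varsigma_2,N$ large gives $C_{con}=\kappa<1$.

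The main obstacle is the flux coupling in Step 2. It mixes $\tau$-weighted $L^2$ norms with $h_e^{-1}$-weighted jump norms, so its contribution must scale like $C_{\mu,max}^2/C_{\mu,min}$ relative to $\varsigma_1$ so that enlarging $\varsigma_1$ actually shrinks it. A closely related difficulty is controlling the jump term when the coefficient $\frac{\theta_nRT}{c_h^n(1-\beta c_h^n)^2}$ varies between elements. I would handle both by careful Young-parameter choices and, if needed, by assuming $\tau$ is bounded by $\tau_{max}$.
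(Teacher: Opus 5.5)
\textbf{Assessment: same route as the paper, with a genuine gap at the step you flagged.}

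Your outline follows the paper's proof almost step for step. You use the same test functions ($q_h=e_{c_h}^{n+1,l+1}$ in \eqref{eq-4-3-7}, $\mathbf{w}_h=e_{\boldsymbol{u}_{f,h}}^{n+1,l}$ in \eqref{eq-4-3-8}), the same trace/Young split producing $\varsigma_1C_{\mu,min}-\frac{\varrho^2C}{\beta^2C_\lambda}$, the same DG-coercivity route through \eqref{eq-4-3-6} and \eqref{eq-4-33-9} for $e_{\phi_h}^{n+1,l}$, and the same choice of large $\gamma,\varsigma_1,\varsigma_2,N$. The gap is the step you flagged yourself, and your remedy does not close it.

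\textbf{Why the penalty-term bounds fail.} Put $a:=\theta_nRT/(c_h^n(1-\beta c_h^n)^2)$, which is piecewise constant but in general not constant. On an edge shared by $K,K'$, consider the form $(a_Kx-a_{K'}y)(x-y)$ with $x=e_c|_K$, $y=e_c|_{K'}$. Its discriminant is $(a_K-a_{K'})^2$, so it is indefinite as soon as $a_K\neq a_{K'}$. Moreover, for $x=y\neq0$ one has $[a\,e_c]\neq0=[e_c]$. Hence neither of the following holds in general:
\begin{itemize}
\item $\sum_e\frac{\varsigma_1}{h_e}\langle[a\,e_c],[e_c]\rangle_e\ge\varsigma_1C_{\mu,min}\sum_e\frac1{h_e}\|[e_c]\|^2_{L^2(e)}$;
\item $\|[a\,e_c]\|_{L^2(e)}\le C_{\mu,max}\|[e_c]\|_{L^2(e)}$.
\end{itemize}
The paper uses both bounds, in \eqref{eq-109-3} and \eqref{eq-250407-2}, without comment, so you are not behind it here.

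\textbf{Why your proposed remedy does not work.} Write $[a\,e_c]=\{a\}[e_c]+[a]\{e_c\}$. The cross term $\frac{\varsigma_1}{h_e}\langle[a]\{e_c\},[e_c]\rangle_e$ carries the same factor $\varsigma_1$ as the good term. Minimizing over $[e_c]$, the penalty term can be as negative as $-\frac{\varsigma_1}{4h_e}\frac{[a]^2}{\{a\}}\|\{e_c\}\|^2_{L^2(e)}$. For piecewise constants this is of size up to $\varsigma_1h^{-2}C_{\mu,min}^{-1}|[a]|^2\|e_c\|^2_{L^2(K\cup K')}$.

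Only the mass term $\tau^{-1}(\phi_h^{n+1,l}e_c,e_c)$ can absorb this, which forces $\tau\varsigma_1\max_e|[a]|^2\lesssim h^2C_{\phi,min}C_{\mu,min}$. This is a parabolic time-step restriction that gets worse, not better, as $\varsigma_1$ grows. It is not among the lemma's hypotheses. An $h$-independent bound $\tau\le\tau_{max}$ does not remove it, and neither does smoothness of $c_h^n$: that only gives $[a]=O(h)$, which still leaves $\tau\varsigma_1$ small as a requirement.

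The Darcy step has the same defect. The quantity $\sum_eh_e^{-1}\|[e_{\mu_h}^{n+1,l}]\|^2_{L^2(e)}$ contains a term of order $h^{-2}\max_e|[a]|^2\|e_c^{l}\|^2$, which the jump norm of $e_c^l$ does not control.

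\textbf{The missing idea: test with the chemical-potential error.} Test \eqref{eq-4-3-7} with $q_h=e_{\mu_h}^{n+1,l+1}=a\,e_c^{l+1}$ instead. Because $a>0$ is piecewise constant:
\begin{itemize}
\item the mass term becomes $\tau^{-1}(\phi_h^{n+1,l}a\,e_c^{l+1},e_c^{l+1})\gtrsim\tau^{-1}\|a^{1/2}e_c^{l+1}\|^2$;
\item the penalty term is exactly $\varsigma_1\sum_eh_e^{-1}\|[e_{\mu_h}^{n+1,l+1}]\|^2_{L^2(e)}$;
\item the flux term pairs with $[e_{\mu_h}^{n+1,l+1}]$.
\end{itemize}
Testing \eqref{eq-4-3-8} with $e_{\boldsymbol{u}_{f,h}}^{n+1,l}$ then gives
\[
\|e_{\boldsymbol{u}_{f,h}}^{n+1,l}\|^2\le\frac{\varrho^2C}{\beta^2C_\lambda^2}\sum_eh_e^{-1}\|[e_{\mu_h}^{n+1,l}]\|^2_{L^2(e)},
\]
with no coefficient jumps anywhere. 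The jump part then contracts once $\varsigma_1>\frac{\varrho^2C}{\beta^2C_\lambda}$, independently of $a$. Your porosity estimate goes through with the obvious $a$-weights, using $\|e_{p_h}^{n+1,l}\|\le\frac{\varrho}{\beta}C_{\mu,max}^{1/2}\|a^{1/2}e_c^l\|$.

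This yields a contraction in $\|a^{1/2}e_c\|^2+\tau\varsigma_1\sum_eh_e^{-1}\|[a\,e_c]\|^2_{L^2(e)}$. That is a norm on $\mathcal{Q}_h$, which is all the subsequent existence theorem needs.

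\textbf{A note on $\tau$.} Your bookkeeping correctly puts a factor $\tau$ on the jump part after multiplying through by $\tau$. The displayed inequality has no such factor because the paper drops the $1/\tau$ of \eqref{eq-4-3-7}. What your argument actually yields is therefore the $\tau$-weighted estimate, not literally the stated one; that weighted form is the one the fixed-point argument should use.
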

	\begin{proof}
		Let $q_h = e_{c_{h}}^{n+1,l+1}$ in \eqref{eq-4-3-7} and  $\boldsymbol{w}_{h} = e_{\boldsymbol{u}_{f, h}}^{n+1,l}$ in \eqref{eq-4-3-8}, we can get
		\begin{align}
			&(\frac{e_{\phi_{h}}^{n+1, l}c^{n+1,l}_{h}+\phi_{h}^{n+1,l}e_{c_{h}}^{n+1,l+1}}{\tau}, e_{c_{h}}^{n+1,l+1}) + \sum_{e\in \mathcal{E}_h^I}\langle c^{n*}_h e_{\boldsymbol{u}_{f, h}}^{n+1, l}\cdot\boldsymbol{n}, [e_{c_{h}}^{n+1,l+1}]\rangle_e\\\nonumber 
			&\qquad\qquad\qquad\qquad+ \sum_{e\in \mathcal{E}_h^I}\frac{\varsigma_1}{h_e}\langle [e_{\mu_{h}}^{n+1,l+1}], [e_{c_{h}}^{n+1,l+1}]\rangle_e  = 0,\\
			&(\lambda^{-1}(\phi_{h}^{n}) e_{\boldsymbol{u}_{f, h}}^{n+1, l}, e_{\boldsymbol{u}_{f, h}}^{n+1,l}) = \sum_{e\in \mathcal{E}_h^I}\langle [e_{\mu_{h}}^{n+1,l}],c^{n*}_h e_{\boldsymbol{u}_{f, h}}^{n+1,l}\cdot\boldsymbol{n}_e\rangle_e.\label{eq-250407}
		\end{align}
		By the Cauchy-Schwarz inequality, the Young's inequality and $\varrho_0 \leq  \beta c^n_h \leq \varrho$, we obtain
		\begin{align}\label{eq-109-1}
			(e_{\phi_{h}}^{n+1, l}c^{n+1,l}_{h}, e_{c_{h}}^{n+1,l+1})&\leq \frac{\varrho^{2}}{\beta^2C_{\phi, min}}\sum_{K \in \mathcal{K}_h}\| e_{\phi_{h}}^{n+1, l}\|^{2}_{L^2(K)} \\\nonumber
			&\qquad+ \frac{C_{\phi, min}}{4}\sum_{K \in \mathcal{K}_h} \|e_{c_{h}}^{n+1,l+1}\|^{2}_{L^2(K)},
		\end{align}
		Using the trace inequality and the bound $\varrho_0 \leq \beta c^n_h \leq \varrho$, we can get
		\begin{align}	
			\sum_{e\in \mathcal{E}_h^I}\langle [e_{c_{h}}^{n+1,l+1}],c^{n*}_h e_{\boldsymbol{u}_{f,h}}^{n+1,l}\cdot\boldsymbol{n}\rangle_e
			&\leq \frac{C_{\lambda}}{4}\sum_{K\in \mathcal{K}_h}\| e_{\boldsymbol{u}_{f,h}}^{n+1,l}\|^{2}_{L^{2}(K)} \\\nonumber
			&\qquad+ \frac{\varrho^{2} C}{\beta^{2} C_{\lambda}}\sum_{e\in \mathcal{E}_h^I}\frac{1}{h_e}\|[e_{c_{h}}^{n+1,l+1}]\|^{2}_{L^{2}(e)}.
		\end{align}
		Similarly, we have
		\begin{align}\label{eq-250407-2}
			\sum_{e\in \mathcal{E}_h^I}\langle [e_{\mu_{h}}^{n+1,l}],c^{n*}_h e_{\boldsymbol{u}_{f,h}}^{n+1,l}\cdot\boldsymbol{n}\rangle_e
			&\leq \frac{C_{\lambda}}{4}\sum_{K\in \mathcal{K}_h}\| e_{\boldsymbol{u}_{f,h}}^{n+1,l}\|^{2}_{L^{2}(K)}\\\nonumber
			&\qquad + \frac{\varrho^{2} C^{2}_{\mu, max}C}{\beta^{2} C_{\lambda}}\sum_{e\in \mathcal{E}_h^I}\frac{1}{h_e}\|[e_{c_{h}}^{n+1,l}]\|^{2}_{L^{2}(e)}.
		\end{align}	
		By \eqref{eq-250407} and \eqref{eq-250407-2}, we deduce
		\begin{align}	
			\frac{3C_{\lambda}}{4}\sum_{K\in \mathcal{K}_h}\| e_{\boldsymbol{u}_{f,h}}^{n+1,l}\|^{2}_{L^{2}(K)}&\leq \frac{\varrho^{2} C^{2}_{\mu, max}C}{\beta^{2} C_{\lambda}}\sum_{e\in \mathcal{E}_h^I}\frac{1}{h_e}\|[e_{c_{h}}^{n+1,l}]\|^{2}_{L^{2}(e)}.\label{eq-109-2}
		\end{align}
		Combining \eqref{eq-109-1}-\eqref{eq-109-2}, we obtain
		\begin{align}\label{eq-109-3}
			\frac{3C_{\phi, min}}{4}\sum_{K \in \mathcal{K}_h}\|e_{c_{h}}^{n+1,l+1}\|^{2}_{L^2(K)} + (\varsigma_1 C_{\mu, min}- \frac{\varrho^{2} C}{\beta^{2} C_{\lambda}})\sum_{e\in \mathcal{E}_h^I}\frac{1}{h_e}\|[e_{c_{h}}^{n+1,l+1}]\|^{2}_{L^2(e)}\\\nonumber
			\leq \frac{\varrho^{2}}{\beta^2C_{\phi, min}}\sum_{K \in \mathcal{K}_h}\| e_{\phi_{h}}^{n+1, l}\|^{2}_{L^2(K)} + \frac{\varrho^{2} C^{2}_{\mu, max}C}{\beta^{2} C_{\lambda}}\sum_{e\in \mathcal{E}_h^I}\frac{1}{h_e}\|[e_{c_{h}}^{n+1,l}]\|^{2}_{L^{2}(e)}.
		\end{align}
	   Letting $\varphi_{h}=e_{\phi_{h}}^{n+1, l}$ in \eqref{eq-4-33-9}, we have
		\begin{align}
			(e_{\phi_{ h}}^{n+1, l}, e_{\phi_{ h}}^{n+1, l}) = \frac{1}{N}(e_{p_h}^{n+1,l}, e_{\phi_{ h}}^{n+1, l}) &+ \alpha (\nabla\cdot e_{\boldsymbol{u}_{s, h}}^{n+1, l}, e_{\phi_{ h}}^{n+1, l})\label{eq-4-3-9}\\\nonumber
			&-\alpha \sum_{e \in \mathcal{E}^{I}_h}\langle\{e_{\phi_{ h}}^{n+1, l}\boldsymbol{n}_e\}, [e_{\boldsymbol{u}_{s,h}}^{n+1,l}]\rangle_e.
		\end{align}	
		As a result of the Cauchy-Schwarz inequality, the Young's inequality,  \eqref{eq-4-3-p} and $\varrho_0 \leq  \beta c^n_h \leq \varrho$, we have
		\begin{align}\label{eq-1010-1}
			\alpha (\nabla\cdot e_{\boldsymbol{u}_{s, h}}^{n+1, l}, e_{\phi_{ h}}^{n+1, l}) \leq \frac{\alpha }{2}\sum_{K \in \mathcal{K}_h}\|\nabla\cdot e_{\boldsymbol{u}_{s,h}}^{n+1,l}\|^{2}_{L^2(K)} 
			+ \frac{\alpha }{2}\sum_{K \in \mathcal{K}_h}\|e_{\phi_{ h}}^{n+1, l}\|^{2}_{L^2(K)},
		\end{align}
		\begin{align}\label{eq-1010-22}
			\frac{1}{N}(e_{p_h}^{n+1,l}, e_{\phi_{ h}}^{n+1, l})&\leq \frac{1}{2N}\sum_{K \in \mathcal{K}_h}\|e_{p_h}^{n+1,l}\|^{2}_{L^2(K)} + \frac{1}{2N}\sum_{K \in \mathcal{K}_h}\|e_{\phi_{ h}}^{n+1, l}\|^{2}_{L^2(K)}\\\nonumber
			&\leq \frac{(\varrho C_{\mu, \max})^2}{2N\beta^2}\sum_{K \in \mathcal{K}_h}\|e_{c_h}^{n+1,l}\|^{2}_{L^2(K)} + \frac{1}{2N}\sum_{K \in \mathcal{K}_h}\|e_{\phi_{ h}}^{n+1, l}\|^{2}_{L^2(K)}.
		\end{align}
		By the trace inequality, we obtain
		\begin{align}\label{eq-1010-2}
			\alpha \sum_{e \in \mathcal{E}^{I}_h}\langle\{e_{\phi_{ h}}^{n+1, l}\boldsymbol{n}_e\}, [e_{\boldsymbol{u}_{s,h}}^{n+1,l}]\rangle_e\leq \frac{\alpha }{2}\sum_{K \in \mathcal{K}_h}\|e_{\phi_{ h}}^{n+1, l}\|^{2}_{L^{2}(K)} + \alpha  C\sum_{e \in \mathcal{E}^{I}_h}\frac{1}{h_e}\| [e_{\boldsymbol{u}_{s,h}}^{n+1,l}]\|^{2}_{L^{2}(e)}.
		\end{align}
		
		On account of \eqref{eq-1010-1}-\eqref{eq-1010-2}, we get the following inequality
		
		\begin{align}\label{eq-1010-3}
			\sum_{K \in \mathcal{K}_h}\|e_{\phi_{h}}^{n+1, l}\|^{2}_{L^2(K)}&\leq \frac{(\varrho C_{\mu, \max})^2}{2N\beta^2(1-\alpha - \frac{1}{2N})}\|e_{c_h}^{n+1,l}\|^{2}_{L^2(K)} + \frac{\alpha}{2(1-\alpha - \frac{1}{2N})}\sum_{K \in \mathcal{K}_h}\|\nabla\cdot e_{\boldsymbol{u}_{s,h}}^{n+1,l}\|^{2}_{L^2(K)}\\\nonumber
			&\qquad+\frac{\alpha C}{(1-\alpha - \frac{1}{2N})}\sum_{e \in \mathcal{E}^{I}_h}\frac{1}{h_e}\| [e_{\boldsymbol{u}_{s,h}}^{n+1,l}]\|^{2}_{L^{2}(e)}.
		\end{align}
		Let $\textbf{v}_h = e_{\boldsymbol{u}_{s,h}}^{n+1,l}$  in \eqref{eq-4-3-6}, we get 
		\begin{align}\label{eq-1010-4}
			&\sum_{K \in \mathcal{K}_h}(\mathbf{\sigma}_e(e_{\boldsymbol{u}_{s,h}}^{n+1,l}), \varepsilon(e_{\boldsymbol{u}_{s,h}}^{n+1,l}))_K-\sum_{e \in \mathcal{E}^{I}_h}\langle\{\mathbf{\sigma}_e(e_{\boldsymbol{u}_{s,h}}^{n+1,l}) \boldsymbol{n}_e\}, [e_{\boldsymbol{u}_{s,h}}^{n+1,l}]\rangle_e \\\nonumber
			&\qquad - \alpha\sum_{K \in \mathcal{K}_h}(e_{p_h}^{n+1,l}, \nabla\cdot e_{\boldsymbol{u}_{s,h}}^{n+1,l})_{K}+\alpha\sum_{e \in \mathcal{E}^{I}_h}\langle\{e_{p_h}^{n+1,l}\boldsymbol{n}_e\}, [e_{\boldsymbol{u}_{s,h}}^{n+1,l}]\rangle_e\\\nonumber
			&\qquad-\sum_{e \in \mathcal{E}^{I}_h}\langle [e_{\boldsymbol{u}_{s,h}}^{n+1,l}],\{\mathbf{\sigma}_{e}(e_{\boldsymbol{u}_{s,h}}^{n+1,l}) \boldsymbol{n}_e\}\rangle_e+\sum_{e \in \mathcal{E}^{I}_h} \frac{\varsigma_2}{h_e}\langle[e_{\boldsymbol{u}_{s,h}}^{n+1,l}], [e_{\boldsymbol{u}_{s,h}}^{n+1,l}]\rangle_e\\\nonumber
			&= 0.
		\end{align}
		By the Cauchy-Schwarz inequality, Young's inequality, trace inequality and $\varrho_0$ $\leq$  $\beta c^n_h$ $\leq$ $\varrho$, we have
		\begin{align}
			&2\sum_{e \in \mathcal{E}^{I}_h}\langle\{\mathbf{\sigma}_e(e_{\boldsymbol{u}_{s,h}}^{n+1,l}) \boldsymbol{n}_e\}, [e_{\boldsymbol{u}_{s,h}}^{n+1,l}]\rangle_e\label{eq-1010-5}\\\nonumber
			&\leq \frac{1}{2}\sum_{K \in \mathcal{K}_h} (\mathbf{\sigma}_e(e_{\boldsymbol{u}_{s,h}}^{n+1,l}), \varepsilon(e_{\boldsymbol{u}_{s,h}}^{n+1,l}))_{K} 
			+ C\sum_{e \in \mathcal{E}^{I}_h}\frac{1}{h_e}\|[e_{\boldsymbol{u}_{s,h}}^{n+1,l}]\|^{2}_{L^{2}(e)},\\
			&\alpha\sum_{K \in \mathcal{K}_h}(e_{p_{h}}^{n+1,l}, \nabla\cdot e_{\boldsymbol{u}_{s,h}}^{n+1,l})_{K}\label{eq-1010-6}\\\nonumber
			&\leq\alpha\sum_{K\in \mathcal{K}_h}\|c^{n}_he_{\mu_{h}}^{n+1,l}\|_{L^{2}(K)}\|\nabla\cdot e_{\boldsymbol{u}_{s,h}}^{n+1,l}\|_{L^{2}(K)}\\\nonumber
			&\leq \frac{\alpha (\varrho C_{\mu, \max})^2}{2\gamma\beta^2}\sum_{K \in \mathcal{K}_h}\| e_{c_{h}}^{n+1,l}\|^{2}_{L^{2}(K)} + \frac{\alpha\gamma}{2}\sum_{K \in \mathcal{K}_h}\|\nabla\cdot e_{\boldsymbol{u}_{s,h}}^{n+1,l}\|^{2}_{L^{2}(K)},\\
			&\alpha\sum_{e \in\mathcal{E}^{I}_h}\langle\{e_{p_{h}}^{n+1,l}\boldsymbol{n}_e\}, [e_{\boldsymbol{u}_{s,h}}^{n+1,l}]\rangle_e\\\nonumber
			&\leq \frac{\alpha C^{2}_{\mu, \max}\varrho^2}{2\beta^2}\sum_{K \in \mathcal{K}_h}\|e_{c_{h}}^{n+1,l}\|^{2}_{L^{2}(K)} + \alpha C \sum_{e \in \mathcal{E}^{I}_h}\frac{1}{h_e }\| [e_{\boldsymbol{u}_{s,h}}^{n+1,l}]\|^{2}_{L^{2}(e)}.
		\end{align}
		Substituting \eqref{eq-1010-5} and \eqref{eq-1010-6} into \eqref{eq-1010-4}, we get
		\begin{align}\label{eq-1012-1}
			&\eta\sum_{K \in \mathcal{K}_h} \|\varepsilon(e_{\boldsymbol{u}_{s,h}}^{n+1,l})\|^{2}_{L^2(K)} + (\frac{1-\alpha}{2})\gamma\sum_{K \in \mathcal{K}_h} \|\nabla\cdot(e_{\boldsymbol{u}_{s,h}}^{n+1,l})\|^{2}_{L^2(K)}\\\nonumber
			&\qquad+(\varsigma_2-(1+\alpha)C)\sum_{e \in \mathcal{E}^{I}_h}\frac{1}{h_e}\|[e_{\boldsymbol{u}_{s,h}}^{n+1,l}]\|^{2}_{L^2(e)} \\\nonumber
			&\leq \left(\frac{\alpha (1 + \gamma)(\varrho C_{\mu, \max})^2}{2\gamma\beta^2} \right)\sum_{K \in \mathcal{K}_h}\| e_{c_{h}}^{n+1,l}\|^{2}_{L^{2}(K)}.
		\end{align} 
		By \eqref{eq-1012-1}, it is straightforward to derive the following two inequalities:
		\begin{align}\label{eq-1010-7}
			\sum_{K \in \mathcal{K}_h} \|\nabla\cdot(e_{\boldsymbol{u}_{s,h}}^{n+1,l})\|^{2}_{L^2(K)}\leq \frac{1}{(\frac{1}{2}-\alpha)\gamma}\left(\frac{\alpha (1 + \gamma)(\varrho C_{\mu, \max})^2}{2\gamma\beta^2} \right)\sum_{K \in \mathcal{K}_h}\| e_{c_{h}}^{n+1,l}\|^{2}_{L^{2}(K)},
		\end{align}
		and
		\begin{align}\label{eq-1010-8}
			\sum_{e \in \mathcal{E}^{I}_h}\frac{1}{h_e}\|[e_{\boldsymbol{u}_{s,h}}^{n+1,l}]\|^{2}_{L^2(e)}\leq \frac{1}{(\varsigma_2-(1+\alpha)C)}\left(\frac{\alpha (1 + \gamma)(\varrho C_{\mu, \max})^2}{2\gamma\beta^2} \right)\sum_{K \in \mathcal{K}_h}\| e_{c_{h}}^{n+1,l}\|^{2}_{L^{2}(K)}.                                     
		\end{align}
		Combining \eqref{eq-1010-3}, \eqref{eq-1010-7} and \eqref{eq-1010-8}, we obtain
		\begin{align}\label{eq-1010-9}
			&\sum_{K \in \mathcal{K}_h}\|e_{\phi_{h}}^{n+1, l}\|^{2}_{L^2(K)}\\\nonumber
			&\leq \frac{1}{(1-\alpha - \frac{1}{2N})}\left((\frac{1}{(\frac{1}{2}-\alpha)\gamma}+\frac{\alpha C}{(\varsigma_2-(1+\alpha)C)})\frac{\alpha (1+\gamma)(\varrho C_{\mu, \max})^2}{2\gamma\beta^2} \right.\\\nonumber
			&\left.\quad+ \frac{(\varrho C_{\mu, \max})^2}{2N\beta^2} \right)\sum_{K \in \mathcal{K}_h}\| e_{c_{h}}^{n+1,l}\|^{2}_{L^{2}(K)}.
		\end{align}
		Substituting \eqref{eq-1010-9} into \eqref{eq-109-3}, we can get
		\begin{align}
			&\frac{3C_{\phi, min}}{4}\sum_{K \in \mathcal{K}_h}\|e_{c_{h}}^{n+1,l+1}\|^{2}_{L^2(K)} + (\varsigma_1C_{\mu, min} - \frac{\varrho^{2} C}{\beta^{2} C_{\lambda}})\sum_{e\in \mathcal{E}_h^I}\frac{1}{h_e}\|[e_{c_{h}}^{n+1,l+1}]\|^{2}_{L^2(e)}\\\nonumber
			&\leq\frac{\varrho^{2}}{\beta^2C_{\phi, min}}\frac{(C_{\mu, \max}\varrho)^2}{2\beta^2} \frac{1}{(1-\alpha-\frac{1}{2N})}\left((\frac{\alpha}{(1 - 2\alpha)\gamma}\right.\\\nonumber
			&\left.\quad+\frac{\alpha C}{( \varsigma_2-(1+\alpha)C)})(\frac{\alpha (1+\gamma)}{\gamma}) +  \frac{1}{N}\right)\sum_{K \in \mathcal{K}_h}\| e_{c_{h}}^{n+1,l}\|^{2}_{L^{2}(K)}\\\nonumber
			&\quad+ \frac{\varrho^{2} C^{2}_{\mu, max}C}{\beta^{2} C_{\lambda}}\sum_{e\in \mathcal{E}_h^I}\frac{1}{h_e}\|[e_{c_{h}}^{n+1,l}]\|^{2}_{L^{2}(e)},
		\end{align}
		where $0<\alpha<1$, and $\gamma$, $\varsigma_1$, $\varsigma_2$, $ N$ should be chosen to be sufficiently large such that
		
		\begin{align*}
			\frac{3C_{\phi, \min}}{4} 
			&> \frac{\varrho^{2}}{\beta^2 C_{\phi, \min}} \frac{(C_{\mu, \max} \varrho)^2}{2\beta^2} \frac{1}{\left(1 - \alpha - \frac{1}{2N}\right)} \\
			&\quad \left[ 
			\left( 
			\frac{\alpha}{(1 - 2\alpha)\gamma} 
			+ \frac{\alpha C}{\varsigma_2 - (1 + \alpha)C} 
			\right) 
			\frac{\alpha (1 + \gamma)}{\gamma} 
			+ \frac{1}{N} 
			\right]
		\end{align*}
		and		
		\[
		\left( \varsigma_1 C_{\mu, \min} - \frac{\varrho^2 C}{\beta^2 C_{\lambda}} \right) 
		> \frac{\varrho^2 C_{\mu, \max}^2 C}{\beta^2 C_{\lambda}}.
		\]
		Now we define the constant \( C_{\text{cont}} \) as
		
		\begin{align*}
			C_{\text{cont}} = \min \Bigg( 
			&\frac{
				\frac{\varrho^{2}}{\beta^2 C_{\phi, \min}} 
				\frac{(C_{\mu, \max} \varrho)^2}{2\beta^2} 
				\frac{1}{\left(1 - \alpha - \frac{1}{2N}\right)} 
				\left[ 
				\left( 
				\frac{\alpha}{(1 - 2\alpha)\gamma} 
				+ \frac{\alpha C}{\varsigma_2 - (1 + \alpha)C} 
				\right) 
				\frac{\alpha(1 + \gamma)}{\gamma} 
				+ \frac{1}{N} 
				\right]
			}{\frac{3C_{\phi, \min}}{4}}, \\
			&\quad 
			\frac{
				\frac{\varrho^2 C_{\mu, \max}^2 C}{\beta^2 C_{\lambda}} 
			}{
				\varsigma_1 C_{\mu, \min} - \frac{\varrho^2 C}{\beta^2 C_{\lambda}} 
			}
			\Bigg) < 1,
		\end{align*} 
		then we can get the contraction estimate
		\begin{align}
			&\frac{3C_{\phi, min}}{4}\sum_{K \in \mathcal{K}_h}\|e_{c_{h}}^{n+1,l+1}\|^{2}_{L^2(K)} + (\varsigma_1C_{\mu, min} - \frac{\varrho^{2} C}{\beta^{2} C_{\lambda}})\sum_{e\in \mathcal{E}_h^I}\frac{1}{h_e}\|[e_{c_{h}}^{n+1,l+1}]\|^{2}_{L^2(e)} \\\nonumber
			&< C_{con}\left(\frac{3C_{\phi, min}}{4}\sum_{K \in \mathcal{K}_h}\|e_{c_{h}}^{n+1,l}\|^{2}_{L^2(K)} + (\varsigma_1C_{\mu, min} - \frac{\varrho^{2} C}{\beta^{2} C_{\lambda}})\sum_{e\in \mathcal{E}_h^I}\frac{1}{h_e}\|[e_{c_{h}}^{n+1,l}]\|^{2}_{L^2(e)}\right).
		\end{align}
		The proof is completed.
	\end{proof}
	\begin{theorem}
		There exists a unique solution ($\boldsymbol{u}_{s,h}^{n+1}, p_{h}^{n+1}, \boldsymbol{u}_{f,h}^{n+1}, c^{n+1}_{h}, \mu_{h}^{n+1}, \phi_{ h}^{n+1}$) which satisfies the nonlinear system \eqref{eq-full-1}.
	\end{theorem}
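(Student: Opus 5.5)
The plan is to obtain the solution of \eqref{eq-full-1} as the fixed point of the linear iteration \eqref{eq-3-2-1}--\eqref{eq-3-2-5}, using Lemma~\ref{lem-4-c} and the Banach fixed-point theorem on the finite-dimensional space $\mathcal{Q}_h$.

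\textbf{Step 1: the map $\mathcal{F}_h$ is well defined.} Given $c_h^{n+1,l}$ (and the associated $\phi_h^{n+1,l}$, $\boldsymbol{u}_{f,h}^{n+1,l}$), the system for the $(l+1)$-th iterate is linear and square. Because $\mathcal{Q}_h$ is finite dimensional, existence and uniqueness follow once the homogeneous problem is shown to have only the trivial solution. We argue in the following order.
\begin{enumerate}
\item Equation \eqref{eq-3-2-2} is a linear problem for $c_h^{n+1,l+1}$ once $\mu_h^{n+1,l+1}=\mu(c_h^n)+\theta_nRT\zeta_h^{n+1,l+1}$ is substituted. Its left-hand operator is $(\phi_h^{n+1,l}c/\tau,q_h)$ plus a positive multiple of the jump penalty on $c/(c_h^n(1-\beta c_h^n)^2)$. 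Testing with $q_h=\zeta$ gives positivity, since $\phi_h^{n+1,l}>0$ and $c_h^n>0$ by the preceding Theorem and Lemma~\ref{theo-1}. Hence $c_h^{n+1,l+1}$ is uniquely determined.
\item Equation \eqref{eq-3-2-3} then determines $\boldsymbol{u}_{f,h}^{n+1,l+1}$, because $(\lambda^{-1}(\phi_h^n)\cdot,\cdot)$ is coercive on $\mathcal{U}_h$.
\item Equation \eqref{eq-3-2-4} gives $p_h^{n+1,l+1}$ directly.
\item Equation \eqref{eq-3-2-1} gives $\boldsymbol{u}_{s,h}^{n+1,l+1}$ by coercivity of the SIPG elastic form for $\varsigma_2$ large, which is the same estimate as in \eqref{eq-615-1}.
\item Equation \eqref{eq-3-2-5} gives $\phi_h^{n+1,l+1}$ explicitly as an $L^2$ projection.
\end{enumerate}
Together with Lemma~\ref{theo-1} and the preceding Theorem, this shows that $\mathcal{F}_h$ maps the set
\[
\mathcal{S}=\bigl\{c_h\in\mathcal{Q}_h:(1-\delta_1(1-\beta c_h^n)^2)c_h^n\le c_h\le(1+\delta_2(1-\beta c_h^n)^2)c_h^n\bigr\}
\]
into itself, with the porosity staying in $(0,1)$.

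\textbf{Step 2: apply the Banach fixed-point theorem.} We equip $\mathcal{Q}_h$ with the weighted norm
\[
\|c\|_*^2=\frac{3C_{\phi,min}}{4}\|c\|^2_{L^2(\Omega)}+\Bigl(\varsigma_1C_{\mu,min}-\frac{\varrho^2C}{\beta^2C_\lambda}\Bigr)\sum_{e\in\mathcal{E}_h^I}\frac{1}{h_e}\|[c]\|^2_{L^2(e)} .
\]
For $\varsigma_1$ large this is a norm; it is equivalent to the $L^2$ norm since $\mathcal{Q}_h$ is finite dimensional. The set $\mathcal{S}$ is closed, so it is a complete metric space under $\|\cdot\|_*$. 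Lemma~\ref{lem-4-c} gives $\|\mathcal{F}_h(c)-\mathcal{F}_h(c')\|_*^2\le C_{con}\|c-c'\|_*^2$ with $C_{con}<1$. Banach's theorem then yields a unique fixed point $c_h^{n+1}\in\mathcal{S}$, and the iterates converge to it.

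\textbf{Step 3: the fixed point solves \eqref{eq-full-1}.} At the fixed point, the quantities $\phi_h^{n+1,l}$ and $\boldsymbol{u}_{f,h}^{n+1,l}$ coincide with their $(l+1)$ counterparts. Substituting into \eqref{eq-3-2-1}--\eqref{eq-3-2-5} recovers \eqref{eq-full-1} exactly, with the remaining variables uniquely determined as in Step 1.

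\textbf{Main obstacle: uniqueness among all solutions.} Any solution of \eqref{eq-full-1} is a fixed point of $\mathcal{F}_h$, but Lemma~\ref{lem-4-c} is only established under the bounds $\varrho_0\le\beta c\le\varrho$ and $\phi>0$. So uniqueness is a priori only within the class of solutions satisfying these bounds. There are two options:
\begin{itemize}
\item state uniqueness in that admissible class, which is the physically meaningful one; or
\item verify that any solution of \eqref{eq-full-1} automatically lies in $\mathcal{S}$, by rerunning the truncation argument of Lemma~\ref{theo-1} with $l$-indices removed and using the time step \eqref{eq-tau}.
\end{itemize}
I would first try the latter, since that argument never used the iteration structure essentially.
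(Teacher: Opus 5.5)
Your route is the paper's. Its proof only invokes Lemma~\ref{lem-4-c} as a contraction, applies Banach's theorem, and reads off convergence of $\boldsymbol{u}_{f,h}$, $\boldsymbol{u}_{s,h}$, $\phi_h$ from \eqref{eq-109-2}, \eqref{eq-1012-1}, \eqref{eq-1010-9}. Your Steps 1--3 fill in what the paper skips: solvability of each linear stage, a closed invariant set in place of the paper's ``compact spaces'', and identification of the fixed point. There is a genuine gap, however, and it is the one you flagged: your first option is the only viable one, because the unrestricted uniqueness in the statement is false.

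By \eqref{eq-2-2-4} and \eqref{eq-stable}, $p_h^{n+1}$ is affine in $c_h^{n+1}$ with slope $\theta_nRT/(1-\beta c_h^n)^2$, and $\phi_h^{n+1}$ is affine in $p_h^{n+1}$. So the product $\phi_h^{n+1}c_h^{n+1}$ makes \eqref{eq-full-1} a quadratic system. Take a mesh consisting of one simplex, so there are no interior faces and $\boldsymbol{u}_{f,h}^{n+1}=0$. Testing \eqref{eq-2-2-1} with $\mathbf{v}_h=\boldsymbol{x}$ gives $\nabla\cdot\boldsymbol{u}_{s,h}^{n+1}=\frac{d\alpha}{2\eta+d\gamma}p_h^{n+1}$. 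Hence $\phi_h^{n+1}=A+\ell c_h^{n+1}$ with $\ell=\bigl(\frac1N+\frac{d\alpha^2}{2\eta+d\gamma}\bigr)\frac{\theta_nRT}{(1-\beta c_h^n)^2}>0$. The system then collapses to $\ell x^2+Ax-\phi_h^nc_h^n=0$ for $x=c_h^{n+1}$. Its roots have product $-\phi_h^nc_h^n/\ell<0$. The negative root is a genuine solution with $c_h^{n+1}<0$ and $\phi_h^{n+1}=\phi_h^nc_h^n/c_h^{n+1}<0$. Rerunning Lemma~\ref{theo-1} on it fails because that argument needs $\phi_h^{n+1}>0$ and the ratio bounds \eqref{eq-1012-2}--\eqref{eq-1012-3} for the solution being tested. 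Its time-step condition would also involve the competitor's own velocity, for which you have no a priori bound. What you, or the paper, can prove is existence plus uniqueness within $\mathcal{S}$, with porosity in the band of the porosity theorem of Section~4.

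Two further steps need repair.

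\textbf{Step 1(4) is false.} Under the pure traction condition \eqref{eq-BD-1}, $\mathcal{A}$ has no boundary terms. The space $\mathcal{V}_h$ contains the rigid motions $\boldsymbol{r}=\boldsymbol{a}+B\boldsymbol{x}$ with $B^{T}=-B$, for which $\varepsilon(\boldsymbol{r})=0$, $\nabla\cdot\boldsymbol{r}=0$ and $[\boldsymbol{r}]=0$. Hence $\mathcal{A}(\boldsymbol{r},0,\mathbf{v}_h)=0$ for all $\mathbf{v}_h$, and \eqref{eq-615-1} only controls a seminorm. So \eqref{eq-3-2-1} is solvable, since the elastic form is symmetric and the pressure load annihilates rigid motions, but $\boldsymbol{u}_{s,h}$ is unique only modulo rigid motions. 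This is harmless for $\mathcal{F}_h$, because \eqref{eq-3-2-5} sees only $\nabla\cdot\boldsymbol{u}_{s,h}$ and $[\boldsymbol{u}_{s,h}]$. The claimed uniqueness of $\boldsymbol{u}_{s,h}^{n+1}$, however, needs a normalization that neither you nor the paper imposes.

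\textbf{The invariance $\mathcal{F}_h(\mathcal{S})\subset\mathcal{S}$ needs a uniform time step.} Lemma~\ref{theo-1} requires $\tau\le\tau_1,\tau_2$, and these are computed from the input's $\phi_h^{n+1,l}$ and $\boldsymbol{u}_{f,h}^{n+1,l}$; the paper even resets $\tau$ at each $l$. Banach needs a single map, so you must fix one $\tau\le\inf_{c\in\mathcal{S}}\min(\tau_1,\tau_2)$. This infimum is positive: $\mu$ is affine in $c$, so $\boldsymbol{u}_{f,h}$ is bounded on $\mathcal{S}$, and the porosity margin is strict.

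\textbf{Lemma~\ref{lem-4-c} must hold for arbitrary pairs.} It is proved only for consecutive iterates. Its proof extends to arbitrary $c,c'\in\mathcal{S}$: the coefficient $c_h^{n+1,l}$ of $e_{\phi_h}^{n+1,l}$ becomes $\mathcal{F}_h(c')\in\mathcal{S}$. That extension is what a Banach argument on $\mathcal{S}$, and your uniqueness claim, actually require.
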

	\begin{proof}
		Lemma \ref{lem-4-c} indicates that the mapping $\mathcal{F}_h$ is contraction. Moreover, the discrete solution spaces are obviously compact sets. Thus the existence of discrete solutions of the nonlinear system \eqref{eq-full-1} can be obtained by the Banach fixed-point theorem, \eqref{eq-109-2}, \eqref{eq-1012-1} and \eqref{eq-1010-9}, 
		\begin{align*}
			& \boldsymbol{u}_{s,h}^{n+1,l+1} \rightarrow \boldsymbol{u}_{s,h}^{n+1} \quad in \quad \mathcal{V}_h,\\
			& \boldsymbol{u}_{f,h}^{n+1,l+1} \rightarrow \boldsymbol{u}_{f,h}^{n+1} \quad in \quad\mathcal{U}_h^{0},\\
			& p_{h}^{n+1,l+1}, \phi_{h}^{n+1,l+1}, c^{n+1, l+1}_{h}  \rightarrow p_{h}^{n+1}, \phi_{h}^{n+1}, c^{n+1}_{h} \quad in \quad \mathcal{Q}_h.
		\end{align*}
		The proof is completed.
	\end{proof}
	\begin{theorem}\label{th-4-2} 
		
		Assume that the boundary condition \eqref{eq-BD-1} holds, $0<\epsilon\leq c^{n}_{h}\leq\frac{\varrho}{\beta} < \frac{1}{\beta}$. The stabilization parameter $\theta_n$ is taken as follow 
		\begin{align}\label{eq-theta}
			\theta_n = \max_{K \in \mathcal{K}_{h}} \left\{1, \frac{\left(1-\beta c_h^{n}\right)^2}{\chi_1^n\left(1-\chi_1^n \beta c_h^{n}\right)^2}, \frac{\left(1-\beta c_h^{n}\right)^2}{\chi_2^n\left(1-\chi_2^n \beta c_h^{n}\right)^2}\right\},
		\end{align}
		where $\chi_1^{n} = 1 - \delta_1\left(1-\beta c^{n}_{h}\right)^{2}$, $\chi_2^{n} = 1 + \delta_2\left(1 - \beta c^{n}_{h}\right)^{2}$. The total free energy generated by the scheme \eqref{eq-full-1} is dissipated as
		$$D_{\tau} E^{n+1}_{h}  \leq 0,$$
		where 
		\begin{align}
			E^{n+1}_h =& \sum_{K \in \mathcal{K}_h}\int_{K}\left(\phi^{n+1}_{h}f(c^{n+1}_h)+\frac{1}{2}\mathbf{\sigma}_{e}(\boldsymbol{u}^{n+1}_{s,h}):\varepsilon(\boldsymbol{u}^{n+1}_{s,h})+\frac{1}{2N}|p^{n+1}_{h}|^2\right) ~ d \boldsymbol{x}\\\nonumber
			&+\sum_{e \in \mathcal{E}_{h}^I} \frac{\varsigma_1}{2h_e}\langle[\boldsymbol{u}^{n+1}_{s,h}], [ \boldsymbol{u}^{n+1}_{s,h}]\rangle_e
			-\sum_{e \in \mathcal{E}_{h}^I}\langle\{\mathbf{\sigma}_e(\boldsymbol{u}^{n+1}_{s,h}) \boldsymbol{n}_e\}, [\boldsymbol{u}^{n+1}_{s,h}]\rangle_e.
		\end{align}
	\end{theorem}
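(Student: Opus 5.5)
The plan is to test each equation of the fully discrete scheme \eqref{eq-full-1} with a suitable increment and add the results, so that the energy difference $E_h^{n+1}-E_h^n$ appears plus non-positive dissipation terms. Concretely:
\begin{itemize}
\item In \eqref{eq-2-2-2} take $q_h=\mu_h^{n+1}$.
\item In \eqref{eq-2-2-3} take $\mathbf{w}_h=\boldsymbol{u}_{f,h}^{n+1}$. Subtracting it from the previous identity cancels the upwind coupling term and leaves $-(\lambda^{-1}(\phi_h^n)\boldsymbol{u}_{f,h}^{n+1},\boldsymbol{u}_{f,h}^{n+1})-\sum_e\frac{\varsigma_1}{h_e}\|[\mu_h^{n+1}]\|^2_{L^2(e)}\le 0$.
\item In the DG elasticity equation \eqref{eq-2-2-1} take $\mathbf{v}_h=\boldsymbol{u}_{s,h}^{n+1}-\boldsymbol{u}_{s,h}^n$ (after subtracting the level-$n$ equation, as in the porosity theorem above).
\item In \eqref{eq-2-2-5} take $\varphi_h=p_h^{n+1}$.
\end{itemize}
The pressure terms $-\alpha(p_h^{n+1},\nabla\cdot\delta\boldsymbol{u}_{s,h})+\alpha\langle\{p_h^{n+1}\boldsymbol{n}_e\},[\delta\boldsymbol{u}_{s,h}]\rangle$ from $\mathcal A$ then combine exactly with $\alpha$ times the divergence and jump terms in \eqref{eq-2-2-5}. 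This yields $(p_h^{n+1},\phi_h^{n+1}-\phi_h^n)-\frac1N(p_h^{n+1},p_h^{n+1}-p_h^n)$.

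For the elastic part, the symmetric DG bilinear form $a_h(\boldsymbol{u},\mathbf v)$ satisfies $a_h(\boldsymbol{u}^{n+1},\boldsymbol{u}^{n+1}-\boldsymbol{u}^n)=\frac12 a_h(\boldsymbol{u}^{n+1},\boldsymbol{u}^{n+1})-\frac12 a_h(\boldsymbol{u}^n,\boldsymbol{u}^n)+\frac12 a_h(\delta\boldsymbol{u},\delta\boldsymbol{u})$. The last term is nonnegative by the coercivity shown in \eqref{eq-615-1} for $\varsigma_2$ large, and the first two give the strain-energy and interface parts of $E_h$. Likewise, $\frac1N(p^{n+1},p^{n+1}-p^n)\ge\frac1{2N}(|p^{n+1}|^2-|p^n|^2)$ gives the storage term.

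The remaining task is the fluid part. We must show
\[
\sum_K\big(\phi^{n+1}f(c^{n+1})-\phi^nf(c^n)\big)|K|\le(\mu^{n+1},\phi^{n+1}c^{n+1}-\phi^nc^n)-(p^{n+1},\phi^{n+1}-\phi^n).
\]
By \eqref{eq-2-2-4}, $p^{n+1}=c^n\mu^{n+1}-f(c^n)$ elementwise, so the right side equals $(\mu^{n+1},\phi^{n+1}(c^{n+1}-c^n))+(f(c^n),\phi^{n+1}-\phi^n)$. The porosity change in $f$ then cancels, and since $\phi^{n+1}>0$ (previous theorem), it suffices to prove the pointwise inequality
\[
f(c^{n+1})-f(c^n)\le\big(\mu(c^n)+\theta_nRT\zeta^{n+1}\big)(c^{n+1}-c^n).
\]

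This pointwise inequality is the main obstacle. We split $f=f_{\rm ide}+f_{\rm rep}+f_{\rm att}$. The attractive part $f_{\rm att}$ is concave, so the tangent-line inequality handles it with $\mu_{\rm att}(c^n)$. The convex part $g=f_{\rm ide}+f_{\rm rep}$ has
\[
g''(c)=\frac{RT}{c(1-\beta c)^2}.
\]
Taylor's theorem gives $g(c^{n+1})-g(c^n)-g'(c^n)(c^{n+1}-c^n)=\frac12g''(\xi)(c^{n+1}-c^n)^2$, while the stabilization supplies $\theta_n\frac{RT}{c^n(1-\beta c^n)^2}(c^{n+1}-c^n)^2$. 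So we need $\frac12g''(\xi)\le\theta_n g''(c^n)$ for $\xi$ between $c^n$ and $c^{n+1}$.

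Lemma \ref{theo-1} (applied at convergence, using Lemma \ref{lem-4-c}) gives $\chi_1^nc^n\le c^{n+1}\le\chi_2^nc^n$. Hence $\xi=\chi c^n$ with $\chi\in[\chi_1^n,\chi_2^n]$, and
\[
\frac{g''(\xi)}{g''(c^n)}=\frac{(1-\beta c^n)^2}{\chi(1-\chi\beta c^n)^2}.
\]
The function $\chi\mapsto\chi(1-\chi\beta c^n)^2$ is unimodal on $(0,1/(\beta c^n))$, with its maximum at $\chi=1/(3\beta c^n)$, so the ratio is maximized at an endpoint $\chi_1^n$ or $\chi_2^n$. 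Both endpoints are bounded by $\theta_n$ in \eqref{eq-theta}, which also includes the safety value $1$. This even holds without the factor $\frac12$, so the inequality follows.

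Summing all contributions and multiplying by $1/\tau$ gives $D_\tau E_h^{n+1}\le0$. The remaining care is to check the upwind boundary terms vanish by $\mathbf w_h\in\mathcal U_h^0$ and the boundary condition \eqref{eq-BD-1}.
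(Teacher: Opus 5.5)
Your proposal is correct and follows the paper's own route. You reduce the fluid part to the pointwise bound $f(c_h^{n+1})-f(c_h^n)\le\mu_h^{n+1}(c_h^{n+1}-c_h^n)$ via a Taylor expansion of the convex part $f_{\text{ide}}+f_{\text{rep}}$. The ratio of its second derivatives is controlled at the endpoints through $\chi_1^n c_h^n\le c_h^{n+1}\le\chi_2^n c_h^n$ and $\theta_n$, $f_{\text{att}}$ is discarded by concavity, and this is combined with testing the mass and Darcy equations by $\mu_h^{n+1}$ and $\boldsymbol{u}_{f,h}^{n+1}$, exactly as the paper does.

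You go somewhat beyond the paper: you derive the elastic and storage contributions explicitly, where the paper only defers to its reference and its displayed inequality omits the $\frac{1}{2N}|p_h|^2$ term that you correctly recover. You also make $\phi_h^{n+1}>0$ explicit. Two small fixes are needed. Drop the parenthetical about subtracting the level-$n$ elasticity equation, since your computation rightly uses $\mathcal{A}(\boldsymbol{u}_{s,h}^{n+1},p_h^{n+1},\boldsymbol{u}_{s,h}^{n+1}-\boldsymbol{u}_{s,h}^{n})=0$ itself. Also, that identity produces the penalty $\frac{\varsigma_2}{2h_e}$, so the $\varsigma_1$ appearing in $E_h^{n+1}$ must be read as $\varsigma_2$.
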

	\begin{proof}
		By the Taylor expansion and assuming that $\xi$ is a number between $c_h^{n}$ and $c_h^{n+1}$, we have
		\begin{align}
			f\left(c_h^{n+1}\right)-f\left(c_h^{n}\right)=\mu\left(c_h^{n}\right)\left(c_h^{n+1}-c_h^{n}\right)+\frac{f^{\prime \prime}(\xi)}{2}\left(c_h^{n+1}-c_h^{n}\right)^2.
		\end{align}
		In view of the stabilized chemical potential \eqref{eq-stable}, we further obtain
		\begin{align}\label{eq-3-4}
			f\left(c_h^{n+1}\right)-f\left(c_h^{n}\right) & =\mu^{n+1}_h\left(c_h^{n+1}-c_h^{n}\right)+\left(\frac{f^{\prime \prime}(\xi)}{2}-\frac{\theta_n R T}{c_h^{n}\left(1-\beta c_h^{n}\right)^2}\right)\left(c_h^{n+1}-c_h^{n}\right)^2 .
		\end{align}
		From the expression of $f$, we can obtain that$f^{\prime \prime}(\xi)$ can be split into two parts as
		$$
		f^{\prime \prime}(\xi)=f_{i r}^{\prime \prime}(\xi)+f_{a t t}^{\prime \prime}(\xi),
		$$
		where
		\begin{align}
			f_{i r}^{\prime \prime}(\xi)	&= \frac{R T}{\xi(1-\beta \xi)^2}>0, \label{eq-1019-1}\\
			f_{a t t}^{\prime \prime} 	&= \frac{a(T)}{2\sqrt{2}\beta}\left(\frac{(1-\sqrt{2})\beta}{1+(1-\sqrt{2})\beta c} - \frac{(1+\sqrt{2})\beta}{(1 + (1 + \sqrt{2})\beta c)}\right)\\\nonumber 
			&\quad + \frac{a(T)}{2\sqrt{2}\beta}\left(\frac{(1-\sqrt{2})\beta}{(1+(1-\sqrt{2})\beta c)^2} - \frac{(1+\sqrt{2})\beta}{(1 + (1 + \sqrt{2})\beta c)^2}\right) < 0.
		\end{align}
		Differentiating \eqref{eq-1019-1} with respect to $c$, we obtain
		$$
		f_{i r}^{\prime \prime \prime}(\xi)=-\frac{R T(1-\beta \xi)(1-3 \beta \xi)}{\xi^2(1-\beta \xi)^4}=-\frac{R T(1-3 \beta \xi)}{\xi^2(1-\beta \xi)^3},
		$$
		we deduce that
		\begin{align}
			& f_{i r}^{\prime \prime \prime}(\xi) \leq 0, \text { for } \xi \leq \frac{1}{3 \beta}, \label{eq-1019-2}\\
			& f_{i r}^{\prime \prime \prime}(\xi)>0, \text { for } \xi>\frac{1}{3 \beta}\label{eq-1019-3}.
		\end{align}
		
		By \eqref{eq-1019-1}, \eqref{eq-1019-2} and \eqref{eq-1019-3}, we get
		$$
		c_h^{n}\left(1-\beta c_h^{n}\right)^2 \frac{f_{i r}^{\prime \prime}(\xi)}{2} \leq \frac{R T}{2} \sup \left\{1, \frac{c_h^{n}\left(1-\beta c_h^{n}\right)^2}{c_h^{n+1}\left(1-\beta c_h^{n+1}\right)^2}\right\} .
		$$
		Due to the relationship $\chi_1^n c_h^{n} \leq c_h^{n+1} \leq \chi_2^n c_h^{n}$, we get the following estimate
		\begin{align}\label{eq-1019-4}
			\frac{1}{R T} c_h^{n}\left(1-\beta c_h^{n}\right)^2\frac{f_{i r}^{\prime \prime}(\xi)}{2} \leq \frac{1}{2}\sup \left\{1, \frac{\left(1-\beta c_h^{n}\right)^2}{\chi_1^n\left(1-\chi_1^n \beta c_h^{n}\right)^2}, \frac{\left(1-\beta c_h^{n}\right)^2}{\chi_2^n\left(1-\chi_2^n \beta c_h^{n}\right)^2}\right\}=\theta_n.
		\end{align}

		By \eqref{eq-1019-4} and the fact $f_{a t t}^{\prime \prime}(\xi)<0$, we obtain that $c_h^{n}\left(1-\beta c_h^{n}\right)^2 f^{\prime \prime}(\xi) \leq \theta_n R T$. Thus, \eqref{eq-3-4} becomes
		\begin{align}\label{eq-1019-5}
			f\left(c_h^{n+1}\right)-f\left(c_h^{n}\right) \leq \mu^{n+1}_{h}\left(c_h^{n+1}-c_h^{n}\right).
		\end{align}
		Due to \eqref{eq-2-2-2}, \eqref{eq-2-2-3} and \eqref{eq-1019-5}, we obtain that
		\begin{align}\label{eq-2-2-13}
			& \frac{1}{\tau}\sum_{K \in \mathcal{K}_h}\int_{K}\left((\phi_h^{n+1} f(c_h^{n+1})-\phi_h^n f(c_h^n))+(\phi_h^{n+1}-\phi_h^{n}, p_h^{n+1})\right) ~d\boldsymbol{x}\\\nonumber
			\leq &\frac{1}{\tau}\sum_{K \in \mathcal{K}_h}\int_{K} \left(f(c_h^n)(\phi_h^{n+1}-\phi_h^n)+\phi_h^{n+1} \mu_h^{n+1}(c_h^{n+1}-c_h^n)\right.\\\nonumber
			&\left.+(c_h^n \mu_h^{n+1}-f(c_h^n))(\phi_h^{n+1}-\phi_h^n)\right) ~d\boldsymbol{x}\\\nonumber
			=&\frac{1}{\tau} \sum_{K \in \mathcal{K}_h}\int_{K} \mu_h^{n+1}\left(\phi_h^{n+1} c_h^{n+1}-\phi_h^n c_h^n\right) d \boldsymbol{x}\\\nonumber
			=&-\sum_{e \in \mathcal{E}_h^I}\int_{e} [\mu^{n+1}_h] c^{n*}_h\boldsymbol{u}_{f,h}^{n+1}\cdot\boldsymbol{n}_e +  [\mu^{n+1}_h]^2 ~d 	\boldsymbol{x}\\\nonumber
			=&-\left(\sum_{K \in \mathcal{K}_h}\int_{K} \lambda^{-1}(\phi_{h}^{n})|\boldsymbol{u}_{f,h}^{n+1}|^2  ~d \boldsymbol{x} +\sum_{e \in \mathcal{E}^I_h}\int_{e} [\mu^{n+1}_{h}]^{2} ~d \boldsymbol{x}\right).
		\end{align}	
		As it is proved in \cite{Chencmame2024}, we can deduce that 
		\begin{align}\label{eq-2-3130}
			&\frac{1}{2}\sum_{K \in \mathcal{K}_h}D_{\tau}(\mathbf{\sigma}_e(\boldsymbol{u}_{s,h}^{n+1}), \varepsilon(\boldsymbol{u}_{s,h}^{n+1}))_K + \sum_{e \in \mathcal{E}_h^I} \frac{\varsigma_1}{2h_e}D_{\tau}\langle[\boldsymbol{u}_{s,h}^{n+1}], [ \boldsymbol{u}_{s,h}^{n+1}]\rangle_e\\\nonumber
			& -\sum_{e \in \mathcal{E}_h^I}\langle\{\mathbf{\sigma}_e(\boldsymbol{u}_{s,h}^{n+1}) \boldsymbol{n}_e\}, [\boldsymbol{u}_{s,h}^{n+1}]\rangle_e +\sum_{e \in \mathcal{E}_h^I}\langle\{\mathbf{\sigma}_e(\boldsymbol{u}_{s,h}^{n}) \boldsymbol{n}_e\}, [\boldsymbol{u}_{s,h}^{n}]\rangle_e\\\nonumber
			& -\sum_{K \in \mathcal{K}_h}(D_{\tau} \phi^{n+1}_h,p^{n+1}_h)_K\\\nonumber
			\leq&  \  0. 
		\end{align}
		Combining \eqref{eq-2-2-13}-\eqref{eq-2-3130}, we get
		$$D_{\tau} E^{n+1}_{h}  \leq 0.$$
		The proof is completed.
	\end{proof}
	\section{Numerical examples}
	In this section, we present several numerical experiments to validate our theoretical analysis and demonstrate the superior performance of the numerical schemes.
	In all the numerical experiments, we consider the flow of methane in porous media, with its physical properties shown in Table \ref{table1}. In numerical simulation, we take $\delta_1 = \delta_2 = \delta $. For a fixed  $\delta$, the adaptive time step and stabilization terms are selected according to (\ref{eq-tau}) and (\ref{eq-theta}). In all the numerical examples, the viscosity is set to be $\eta = 10^{-5}~ \mathrm{Pa\cdot s}$. For the two-dimensional example, we select the computational domain as $\Omega = [0, L]^2$ where $L$ = 100 m, and use a quasi-uniform triangular mesh with  $20,000$ elements. For the three-dimensional numerical experiments, we employed a tetrahedral mesh consisting of 162,000 elements. Figure \ref{fig-mesh} illustrates the specific mesh used in our simulations.

	\begin{figure}[htbp]
		\centering
		\includegraphics[width=5.5cm, height=5.5cm,trim=5cm 0cm 5cm 0cm,clip]{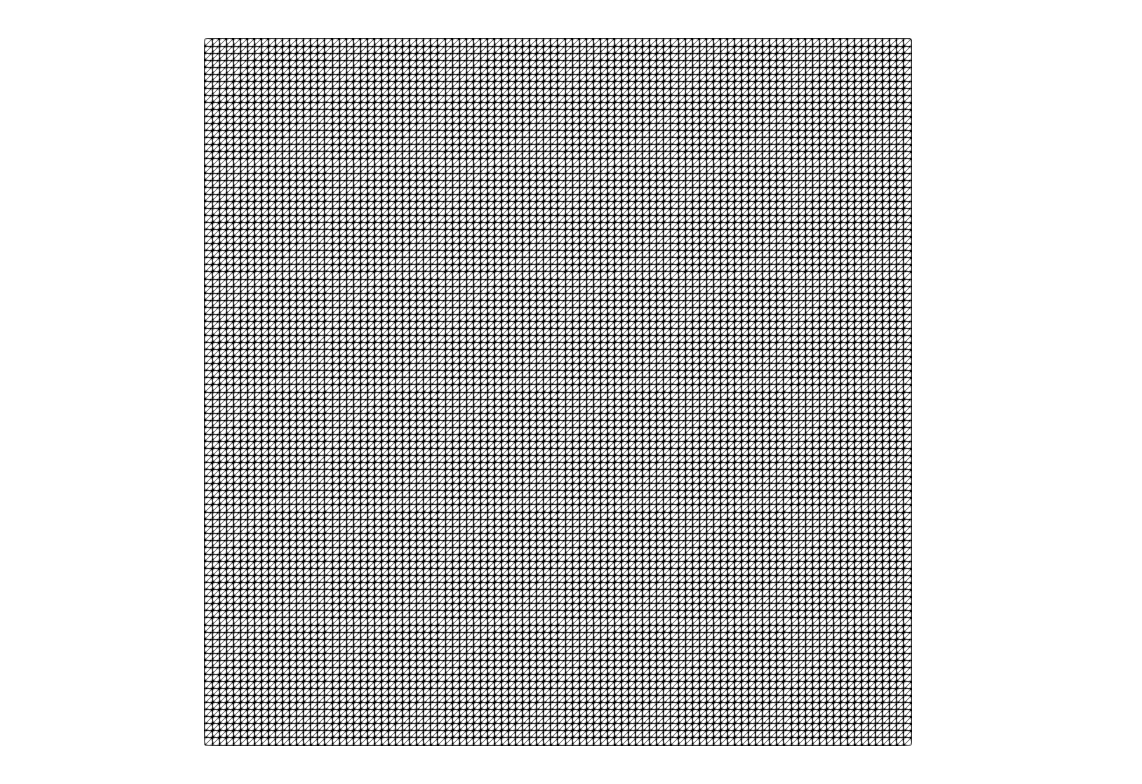}
		\includegraphics[width=5.5cm, height=5.5cm,trim=5cm 0cm 5cm 0cm,clip]{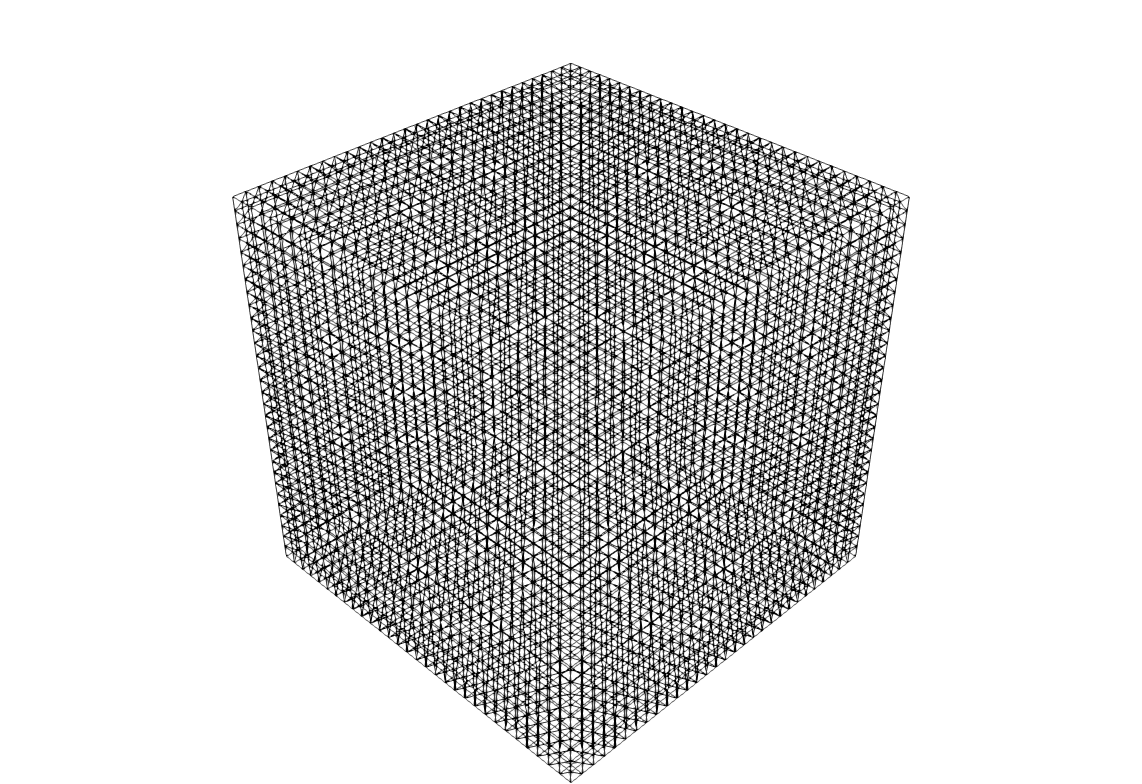}
		\caption{Mesh:  Left: Two-dimensional mesh. Right: Three-dimensional mesh.}\label{fig-mesh}
	\end{figure}

	\begin{table}[htbp]
		\centering
		\caption{Physical properties of methane.}\label{table1}
		\begin{tabular}{ccccc}
			\hline$P_c(\mathrm{bar})$ & $T_c(\mathrm{K})$ & Acentric factor & $M_w(\mathrm{g} / \mathrm{mole})$ & Temperature$(\mathrm{K})$ \\
			\hline 45.99 & 190.56 & 0.011 & 16.04 & $330$ \\
			\hline
		\end{tabular}
	\end{table}

\subsection{Example 1}

To evaluate the accuracy of the proposed numerical scheme, we conduct both temporal and spatial convergence tests. 
Since the nonlinear porous media gas flow model does not possess an analytical solution, the reference solution is obtained using the finest temporal or spatial discretization.

For the temporal refinement test, the computational domain $[0,1]\times[0,1]$ is discretized using a fixed $50\times 50$ mesh. 
The time step sizes are chosen as:
$$
\tau \in 
\left\{
9.75\times10^{-5},\ 
4.875\times10^{-5},\ 
2.4375\times10^{-5},\ 
1.21875\times10^{-5},\ 
6.09375\times10^{-6},\ 
1.5235375\times10^{-6}
\right\},
$$
and the smallest time step $\tau_{\min} = 1.5235375\times10^{-6}$ is taken as the reference temporal solution. 
The numerical errors of the molar density at a prescribed final time are computed by comparing the solutions obtained with larger time steps against the reference one.

For the spatial refinement test, the time step is fixed to $\tau = 10^{-2}.$ The spatial mesh resolutions are selected as $N \in \{10,\ 20,\ 40,\ 80,\ 160,\ 640\},$ and the finest mesh $N_{\max} = 640$ is adopted as the reference spatial solution. 

Table~\ref{table2} presents the temporal convergence results of the molar density. As the time step size $\tau$ decreases, the $L^2$-error exhibits a clear monotonic decay. The estimated convergence rates between successive refinements remain close to a first-order trend, and the fitted slope is approximately 1.45,
which demonstrates that the proposed scheme achieves the expected temporal accuracy.
Table~\ref{table3} shows the spatial convergence behavior under mesh refinement. As the mesh is refined from $h=1/10$ to $h=1/160$, the numerical errors decrease steadily. The computed rates are close to first order, and the fitted slope is approximately 1.01.
indicating that the method maintains stable and consistent spatial accuracy.
The corresponding log–log convergence plots are displayed in Figure~\ref{fig0-convergence}, where the linear decay trends further validate the temporal and spatial convergence properties of the proposed scheme. Overall, these results demonstrate that the method possesses reliable accuracy with respect to both time and space discretizations.

\begin{table}[h!]
\centering
\begin{tabular}{c c c}
\hline
Time step $\tau$ & $L^2$-Error & Rate \\
\hline
9.75e-5 & 1.330829e+00 & 1.4958 \\
4.875e-5 & 4.718754e-01 & 1.4706 \\
2.4375e-5 & 1.702673e-01 & 1.4263 \\
1.21875e-5 & 6.335198e-02 & 1.4323 \\
6.09375e-6 & 2.347375e-02 & - \\
\hline
\multicolumn{3}{c}{Fitted slope: $1.45$} \\
\hline
\end{tabular}
\caption{The time-convergence results of the molar density $c$}\label{table2}
\end{table}

\begin{table}[h!]
\centering
\begin{tabular}{c c c}
\hline
Mesh size $h$ & $L^2$-Error & Rate \\
\hline
1/10 & 9.61e-01 & 1.0099 \\
1/20 & 4.77e-01  & 0.9700  \\
1/40 & 2.44e-01 & 1.0430  \\
1/80 & 1.18e-01  & 1.0353  \\
1/160 & 5.77e-02 & - \\
\hline
\multicolumn{3}{c}{Fitted slope: $1.01$} \\
\hline
\end{tabular}
\caption{The spatial convergence results of the molar density $c$}\label{table3}
\end{table}
	\begin{figure}[htbp]
		\centering
		\includegraphics[width=5.5cm, height=4cm,trim = 0.5cm 0cm 0cm 0cm,clip]{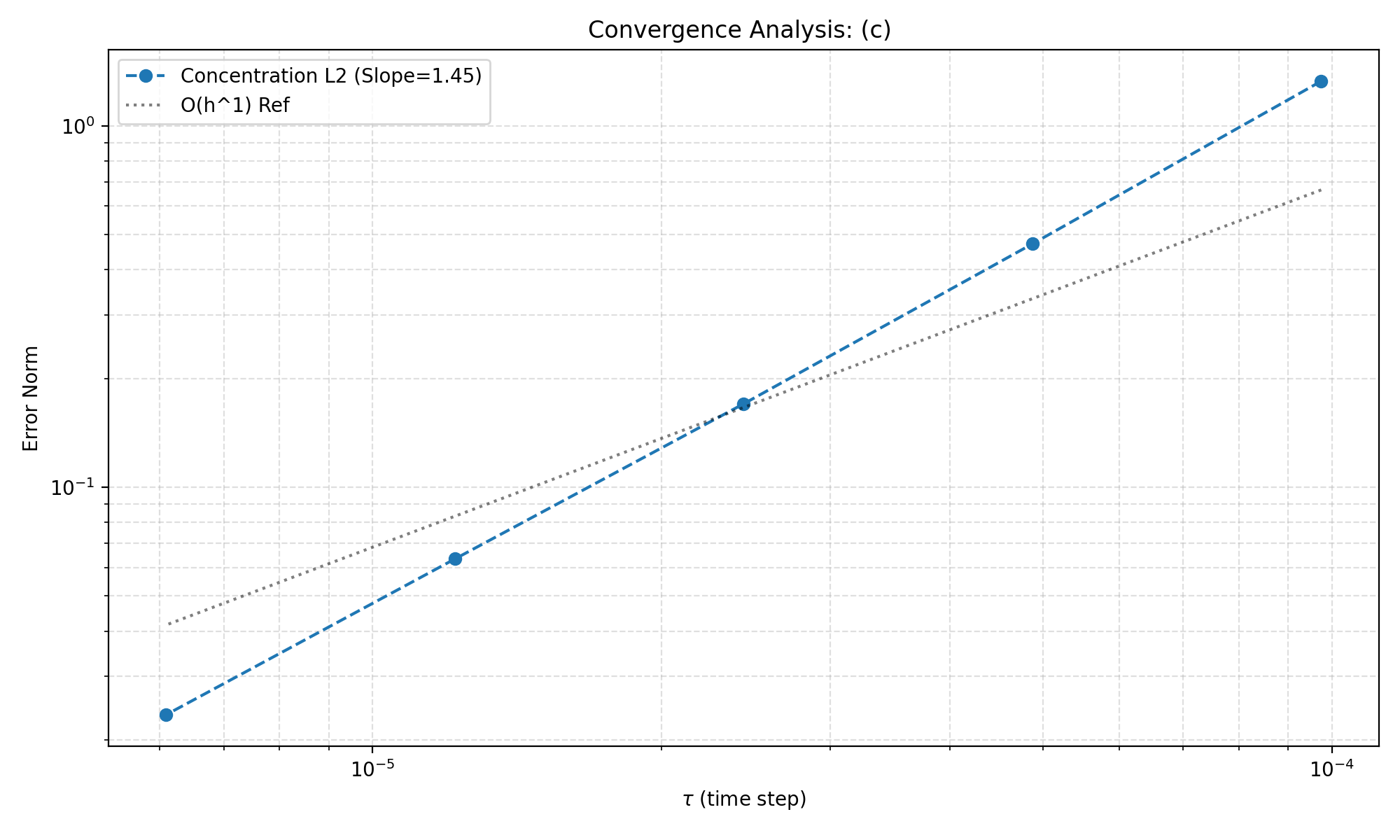}
		\includegraphics[width=5.5cm, height=4cm,trim = 0.5cm 0cm 0cm 0cm,clip]{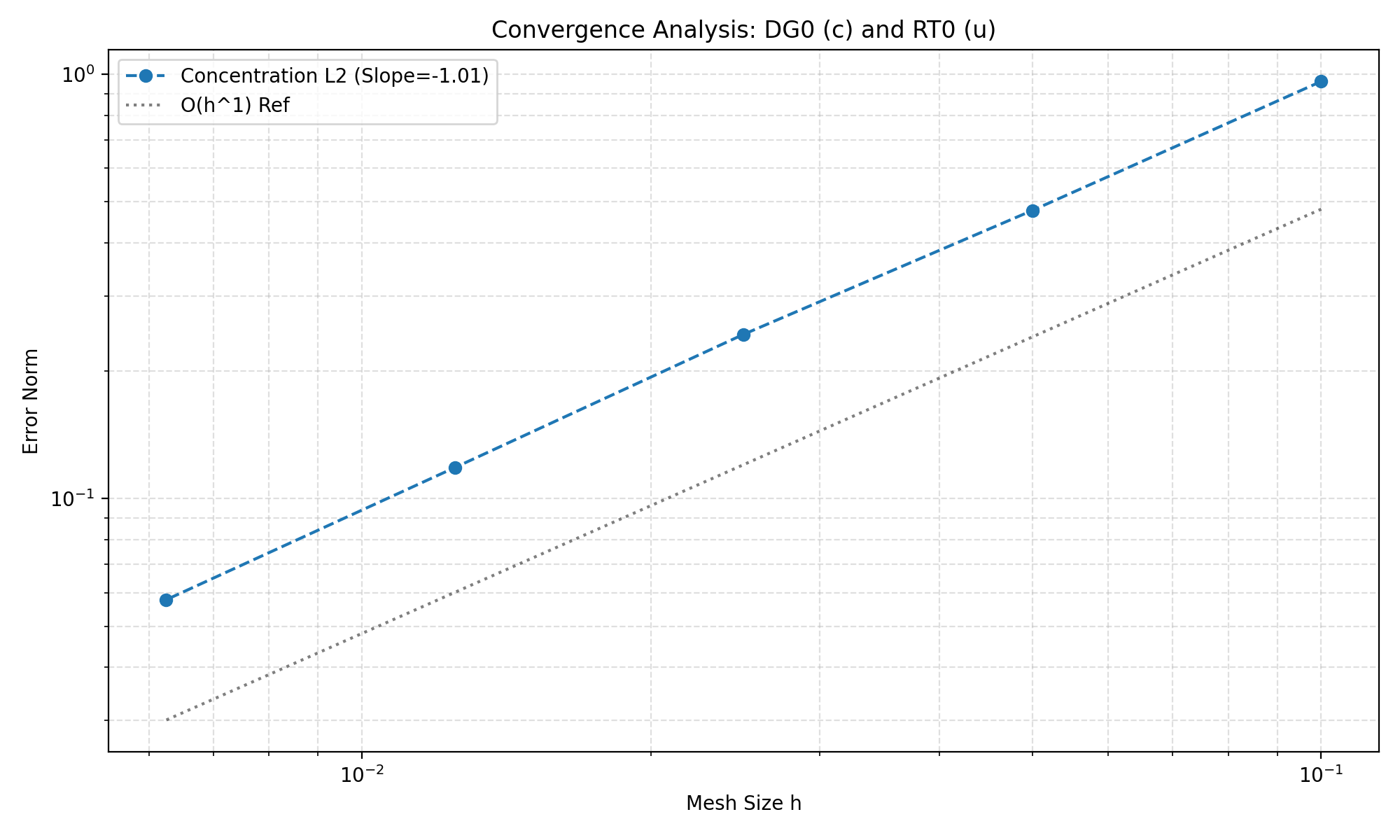}
		\caption{Example 1: Convergence plots of the molar density $c$ in space and time. Left: Spatial, Right: Temporal.} \label{fig0-convergence}
	\end{figure}

	\subsection{Example 2}
	In this example, we test a closed system to verify that the proposed numerical scheme ensures properties such as mass conservation and energy dissipation. The Perlin noise method is adopted to generate spatially varying random permeability fields, aiming to mimic realistic geological scenarios. The initial molar density is obtained using the following spatial random distribution method,
	$$
	c^0 = c_0+\operatorname{rand}(\boldsymbol{x}) \cdot\left(c_1-c_0\right),\\
	$$
	where $\operatorname{rand}(\boldsymbol{x})$ is a function for generating random numbers within the range [0,1], $c_0 = 100$ mol/$m^3$, $c_1 =300$ mol/$m^3$. In this test, we choose $N =10^{11}$ Pa, $\gamma =10^{11}$ Pa , $\eta = 10^{8}$ Pa, $\delta = 0.2$.

    To evaluate the influence of the time-step size on the performance of the nonlinear solver, 
    the iteration tolerance was set to $e = 10^{-11}$,
    and the nonlinear iterations were terminated once the residual dropped below this threshold. Five different time-step sizes were tested, and for each $\Delta t$, five consecutive time levels were computed. The number of nonlinear iterations required at each time step is summarized  in Table~\ref{table4}. These results show that the nonlinear solver is almost insensitive with the time-step size and remains robust for different choices of $\tau$.

	Figure \ref{fig1-initial} shows the initial molar density and permeability distribution. Figure \ref{fig1-mass} demonstrates that the proposed scheme preserves the dissipation property, conservation of the total moles and boundedness of the molar density. The graph on the left-hand side of Figure \ref{fig1-time} depicts the adaptive values of stabilization parameter and the graph on the right-hand side of Figure \ref{fig1-time} depicts the adaptive values of time step, which demonstrates the time step increases gradually as the system reaches steady state until the maximum set step $\tau_{max} = 1000$ is reached.
	
	In Figure \ref{fig1-c}, Figure \ref{fig1-ch}  and Figure \ref{fig1-pres}, we illustrate the distributions of molar density, chemical potential and pressure at different times. It can be seen that the molar density gradually reaches the equilibrium state driven by the chemical potential energy. 
\begin{table}[h]
    \centering
    \caption{Number of nonlinear iterations with different time steps}\label{table4}
    \label{tab:iterations_dt}
    \begin{tabular}{cccccccc}
    \hline
    $\tau$ & step\_1 & step\_2 & step\_3 & step\_4 & step\_5 \\
    \hline
    0.010000 & 7 & 7 & 6 & 6 & 6  \\
    0.005000 & 7 & 7 & 6 & 6 & 6  \\
    0.002500 & 7 & 7 & 6 & 6 & 6  \\
    0.001250 & 7 & 7 & 6 & 6 & 6  \\
    0.000625 & 7 & 7 & 6 & 6 & 6  \\
    \hline
    \end{tabular}
\end{table}
	\begin{figure}[htbp]
		\centering
		\includegraphics[width=5.5cm, height=4cm,trim = 0.5cm 0cm 0cm 0cm,clip]{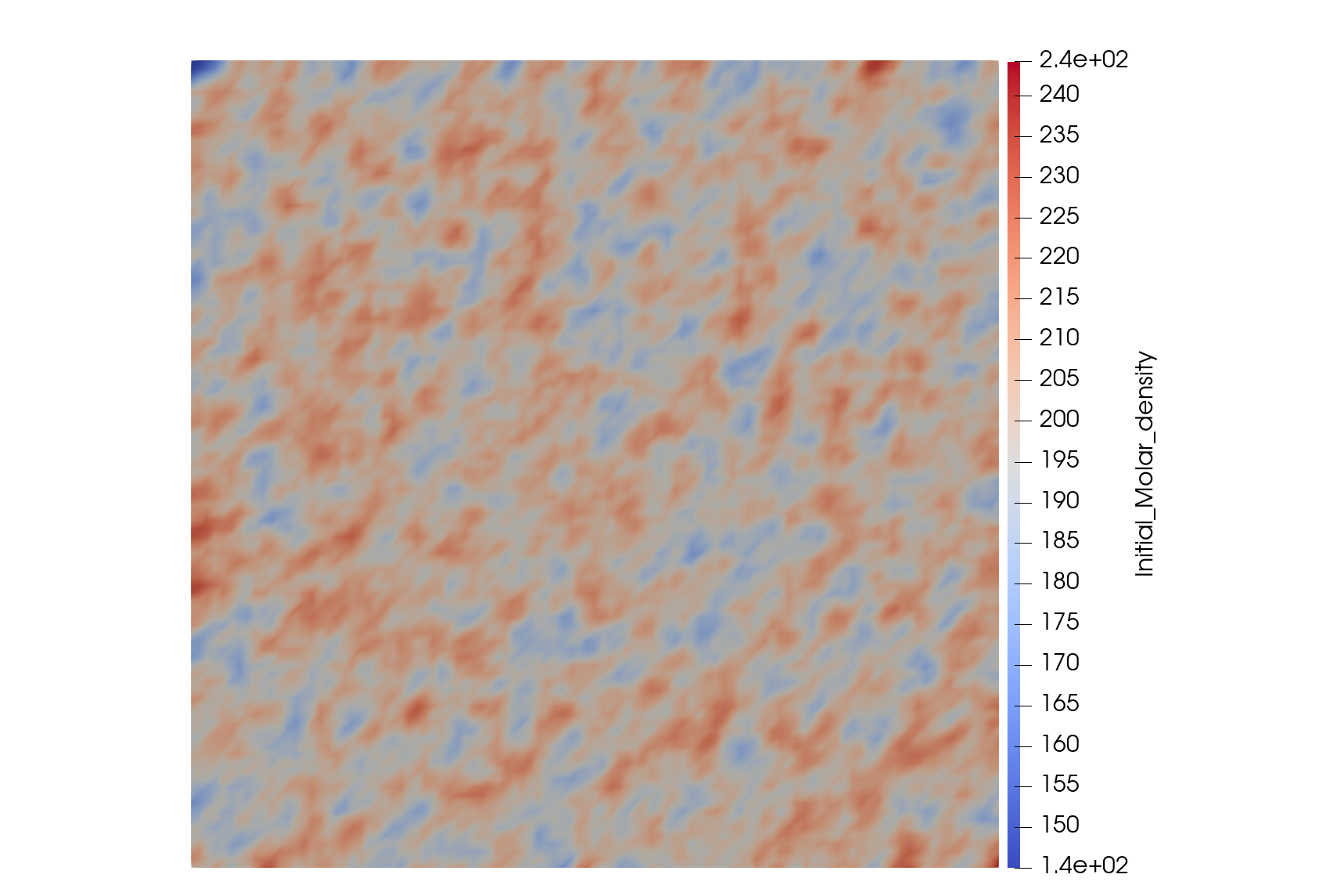}
		\includegraphics[width=5.5cm, height=4cm,trim = 0.5cm 0cm 0cm 0cm,clip]{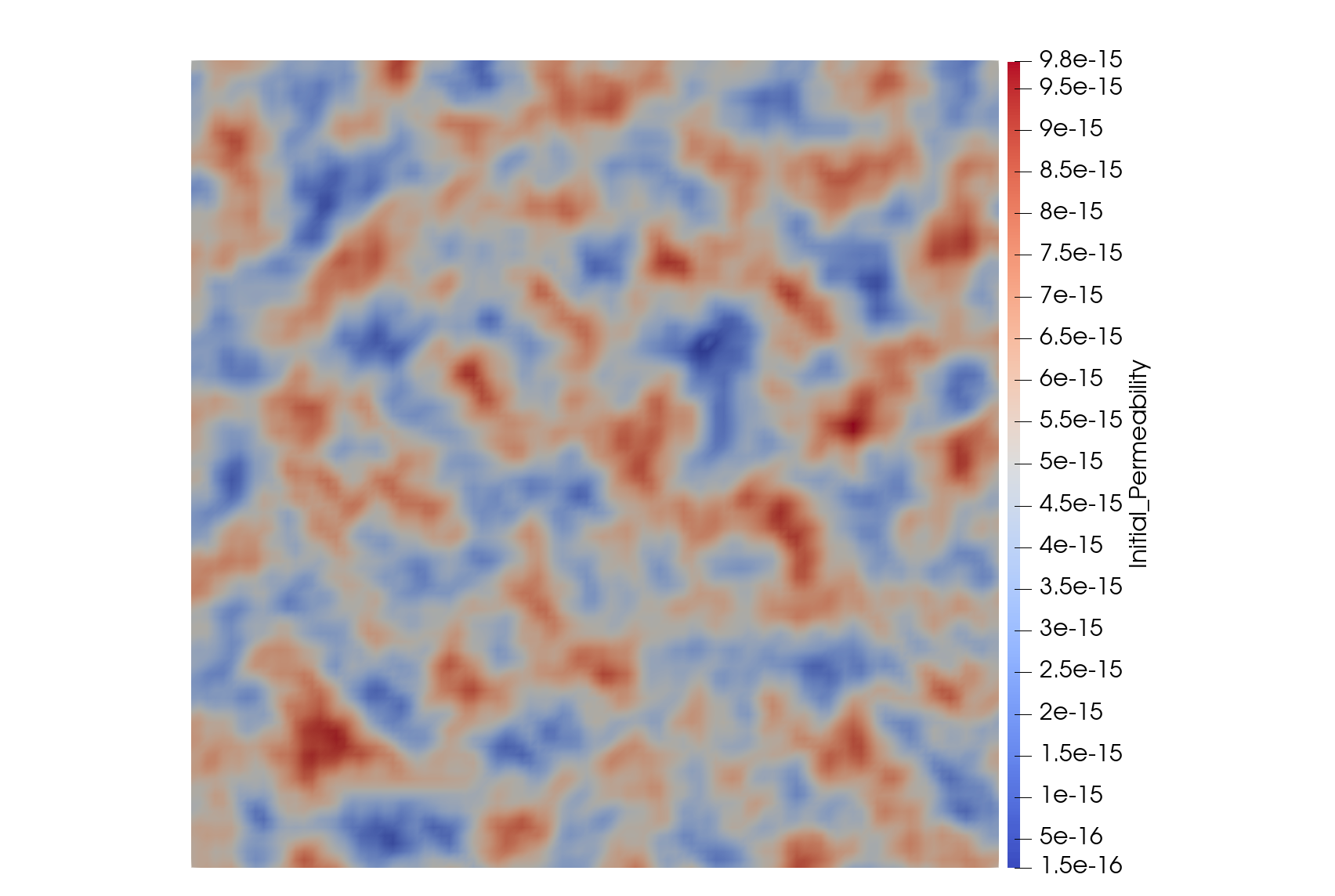}
		\caption{Example 2:  Distributions of initial molar density and permeability. Left: Initial molar density. Right: Initial permeability.}\label{fig1-initial}
	\end{figure}
	
	\begin{figure}[htbp]
		\centering
		\includegraphics[width=5.0cm, height=4.5cm]{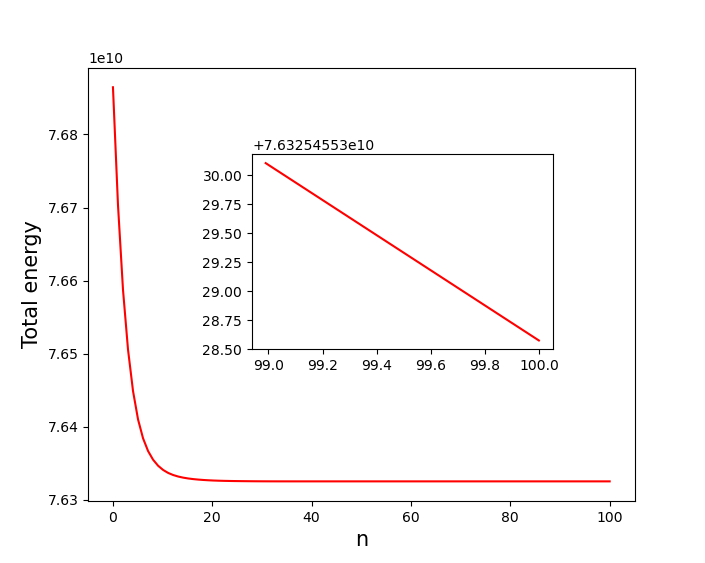}
		\includegraphics[width=5.0cm, height=4.5cm]{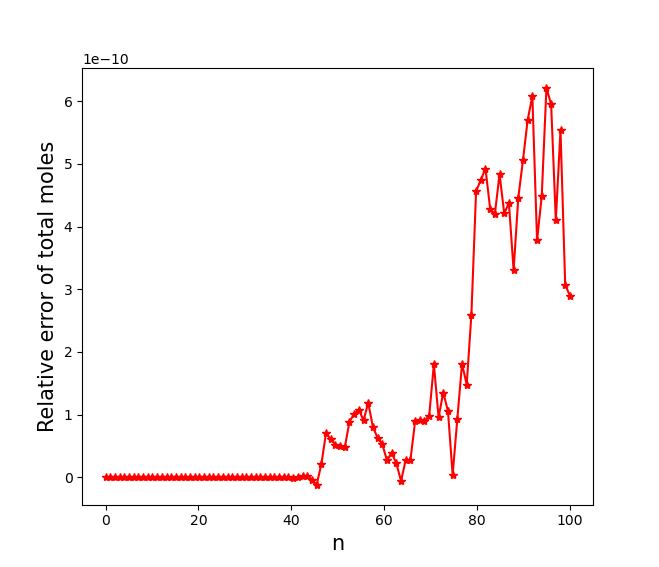}
		\includegraphics[width=5.0cm, height=4.5cm]{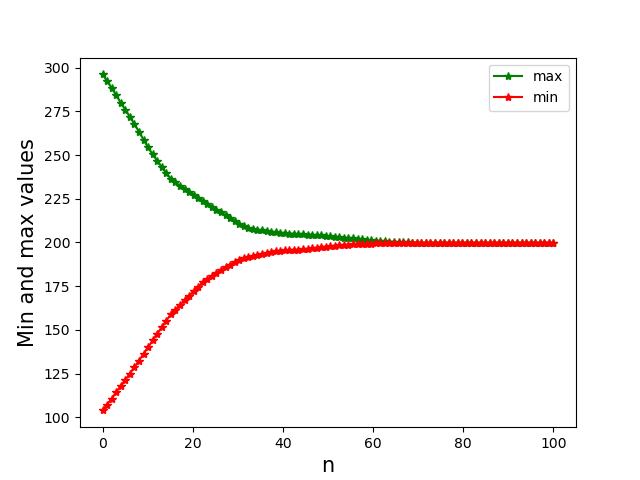}
		\caption{Example 2:  Left: Distributions of energy at different time steps. Middle: Mass conservation  at different time steps. Right: Minimum and maximum values of molar density.}\label{fig1-mass}
	\end{figure}
	\begin{figure}[htbp]
		\centering
		\includegraphics[width=5.0cm, height=4.5cm]{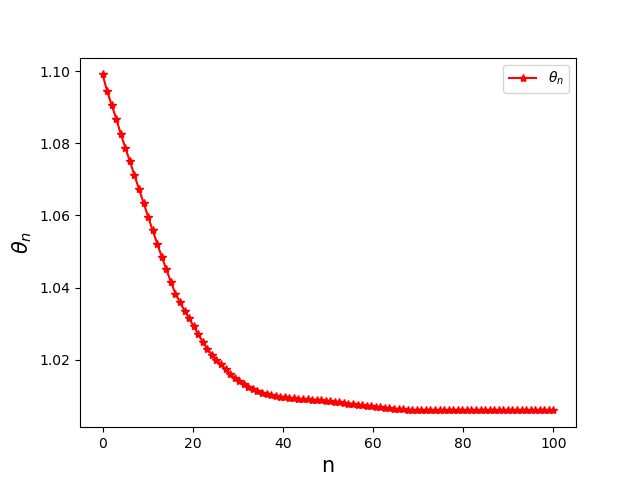}
		\includegraphics[width=5.0cm, height=4.5cm]{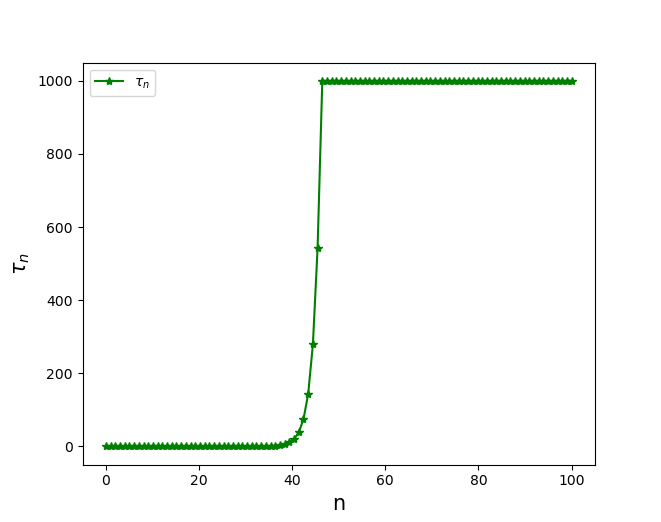}
		\caption{Example 2:   Left: Adaptive values of the stabilization parameter at different time steps. Right: Adaptive values of the time step size.}\label{fig1-time}
	\end{figure}
	\begin{figure}[htbp]
		\centering
		\includegraphics[width=5.5cm, height=4cm,trim=0.5cm 0cm 0cm 0cm,clip]{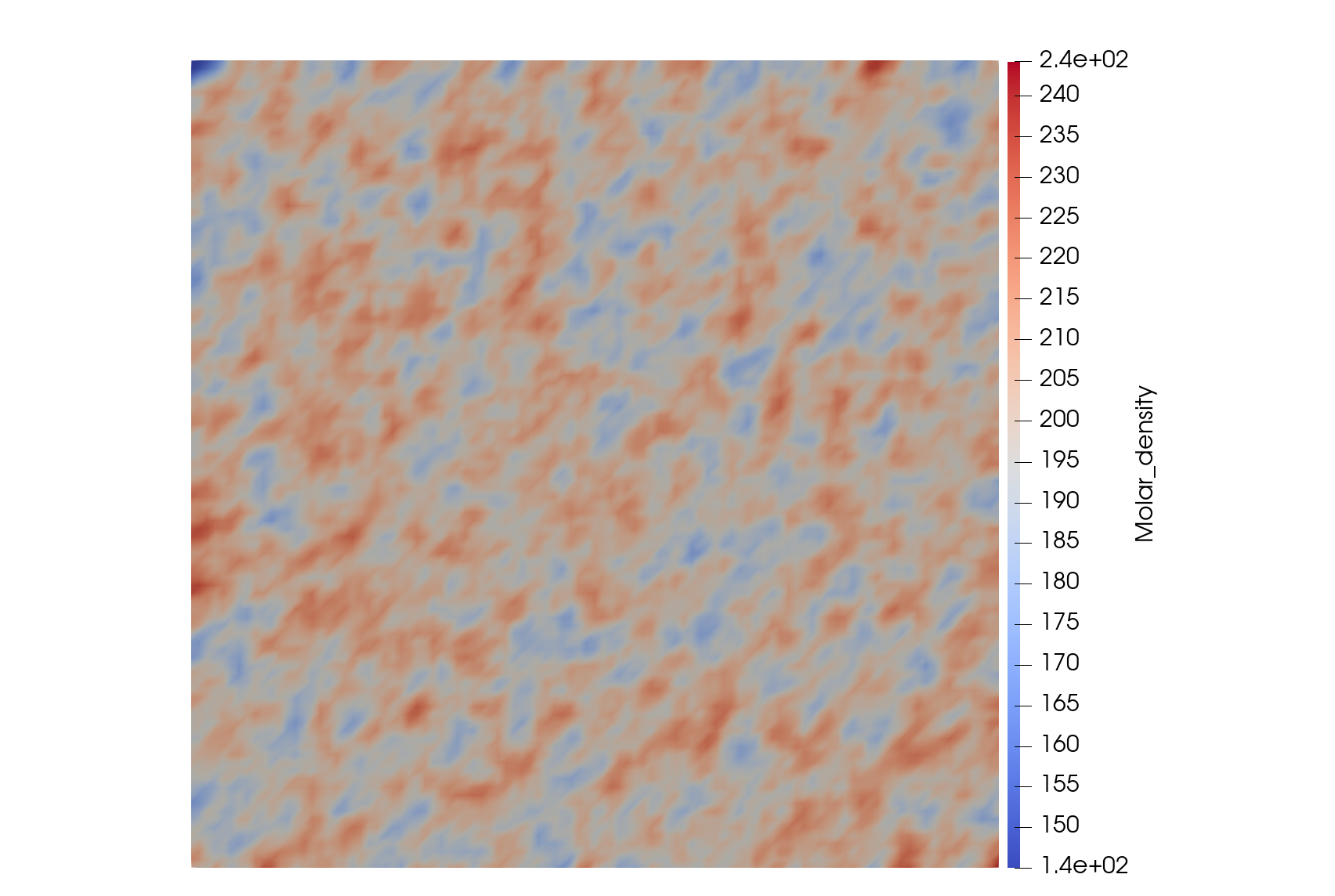}
		\includegraphics[width=5.5cm, height=4cm,trim=0.5cm 0cm 0cm 0cm,clip]{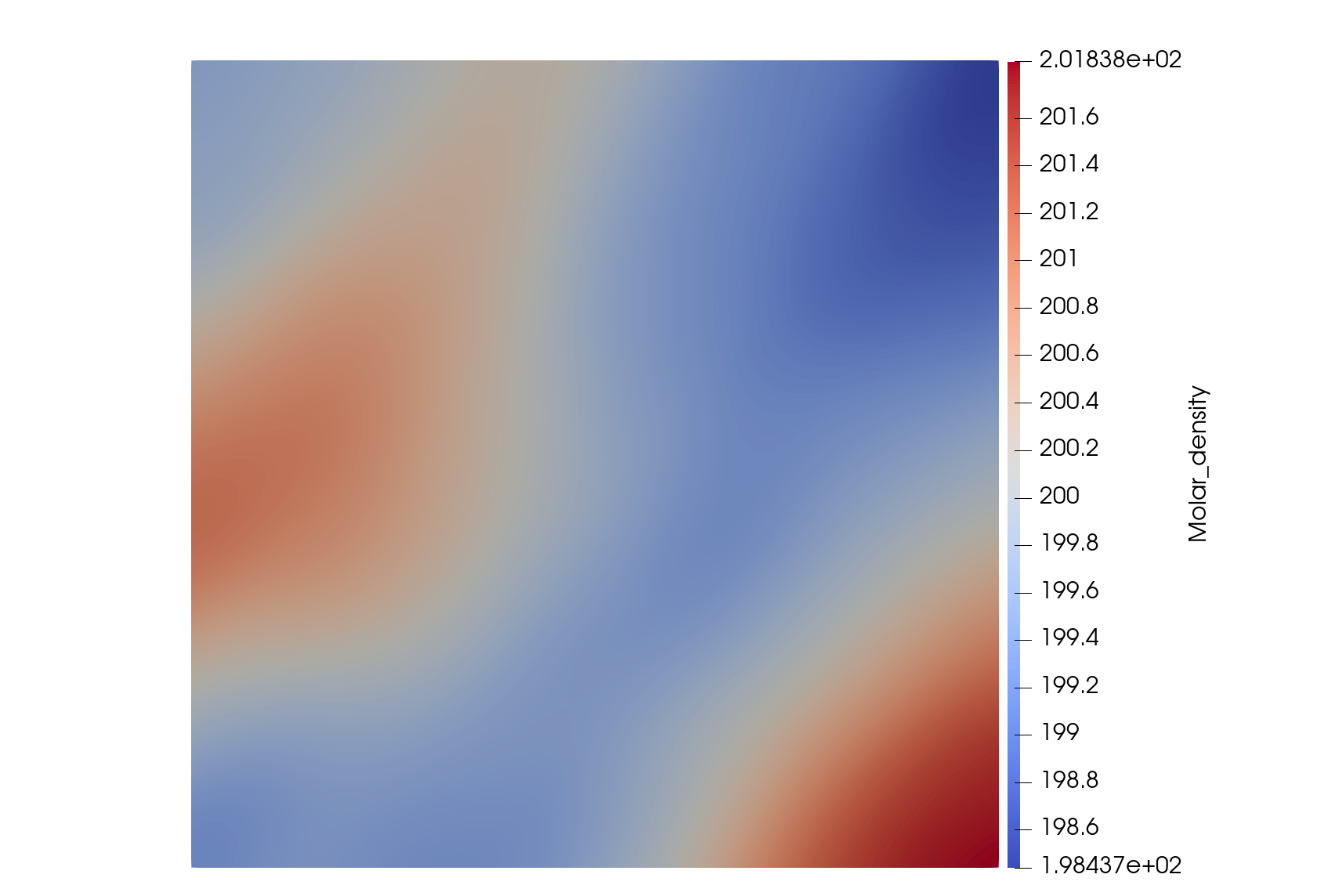}
		
		\includegraphics[width=5.5cm, height=4cm,trim=0.5cm 0cm 0cm 0cm,clip]{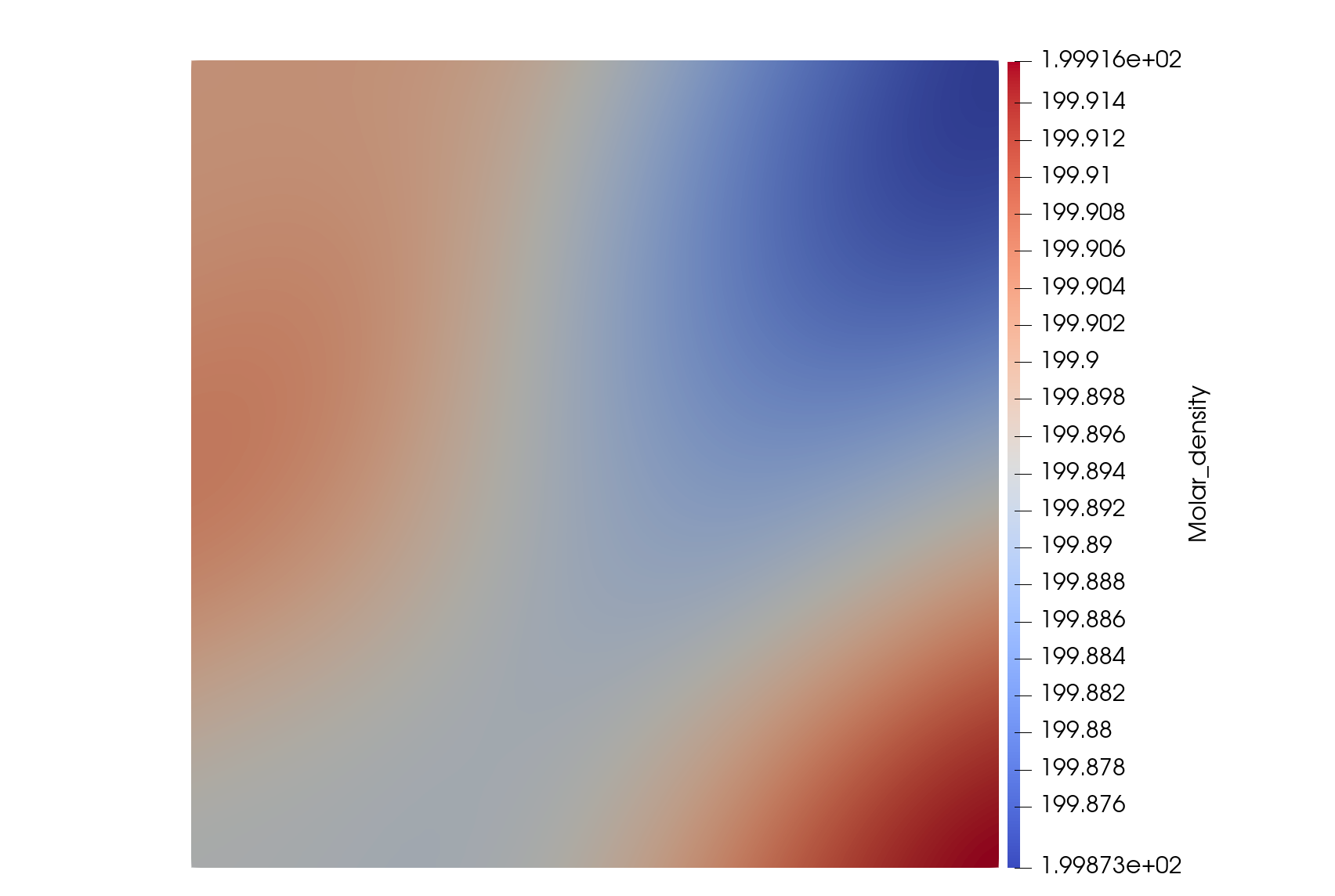}
		\includegraphics[width=5.5cm, height=4cm,trim=0.5cm 0cm 0cm 0cm,clip]{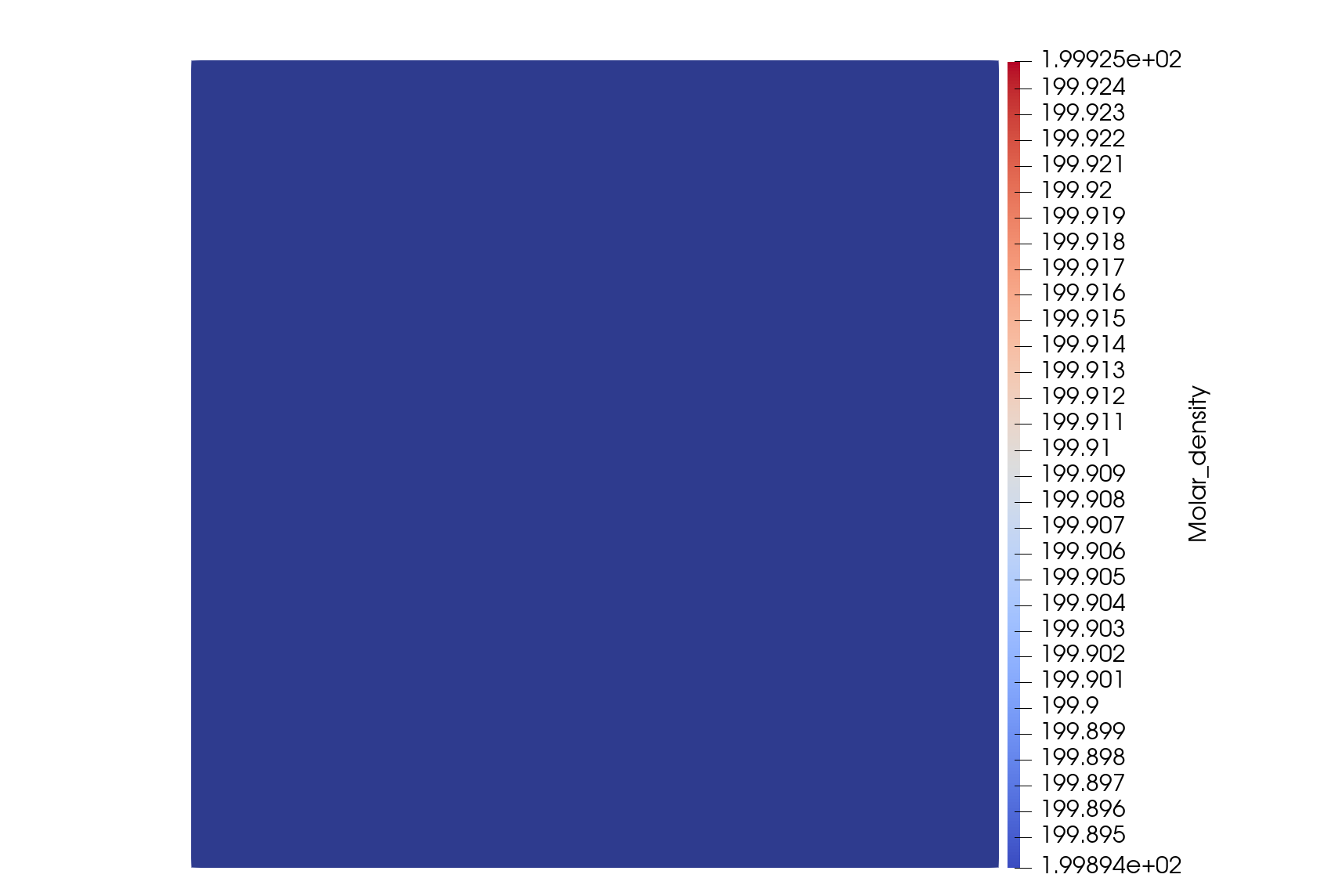}
		\caption{Distributions of molar density at different times in Example 2. Top-left: $n = 10$. Top-right: $n = 30$. Bottom-left: $n = 40$. Bottom-right: $n = 100$.}\label{fig1-c}
	\end{figure}
	
	\begin{figure}[htbp]
		\centering
		\includegraphics[width=5.5cm, height=4cm,trim=0.5cm 0cm 0cm 0cm,clip]{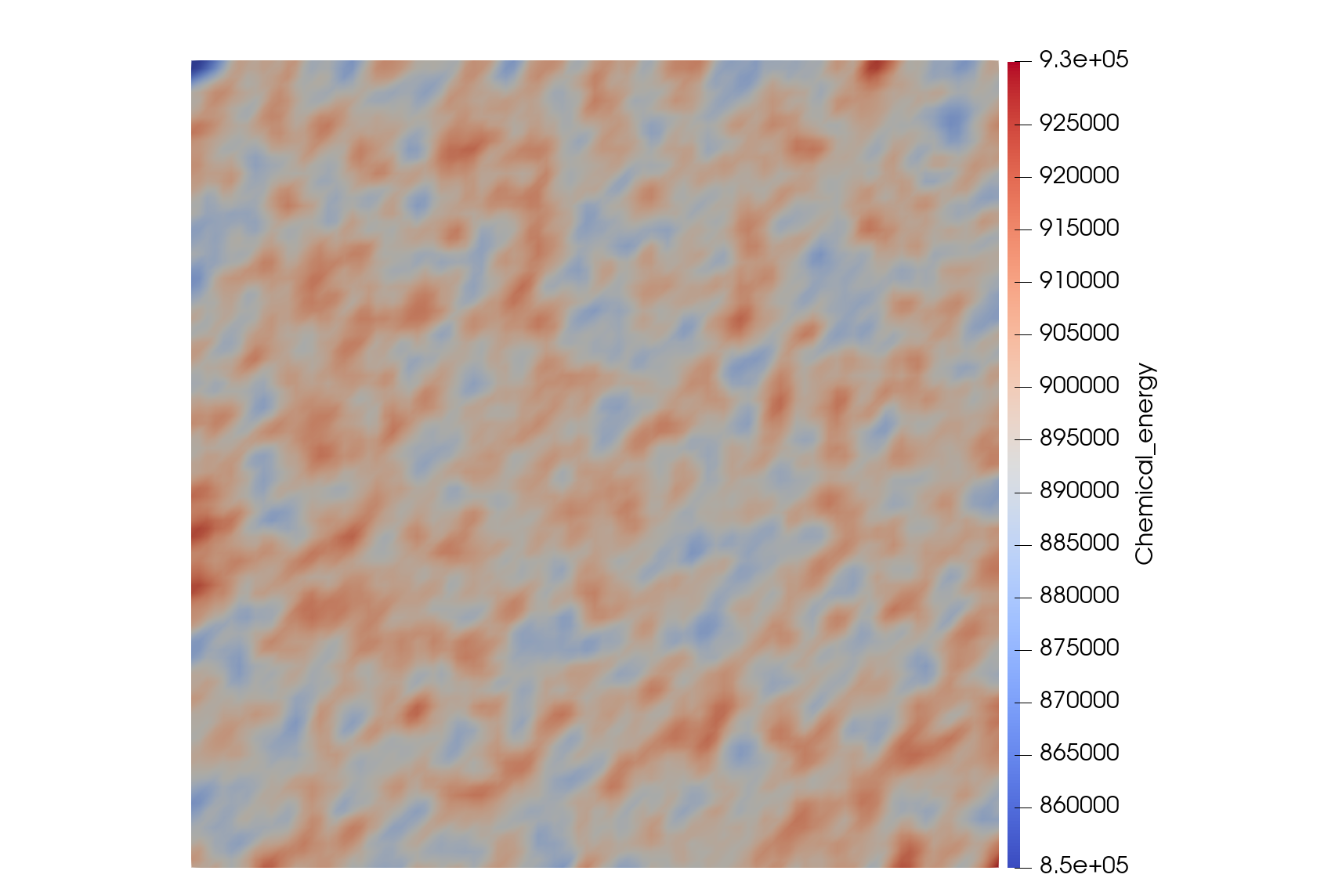}
		\includegraphics[width=5.5cm, height=4cm,trim=0.5cm 0cm 0cm 0cm,clip]{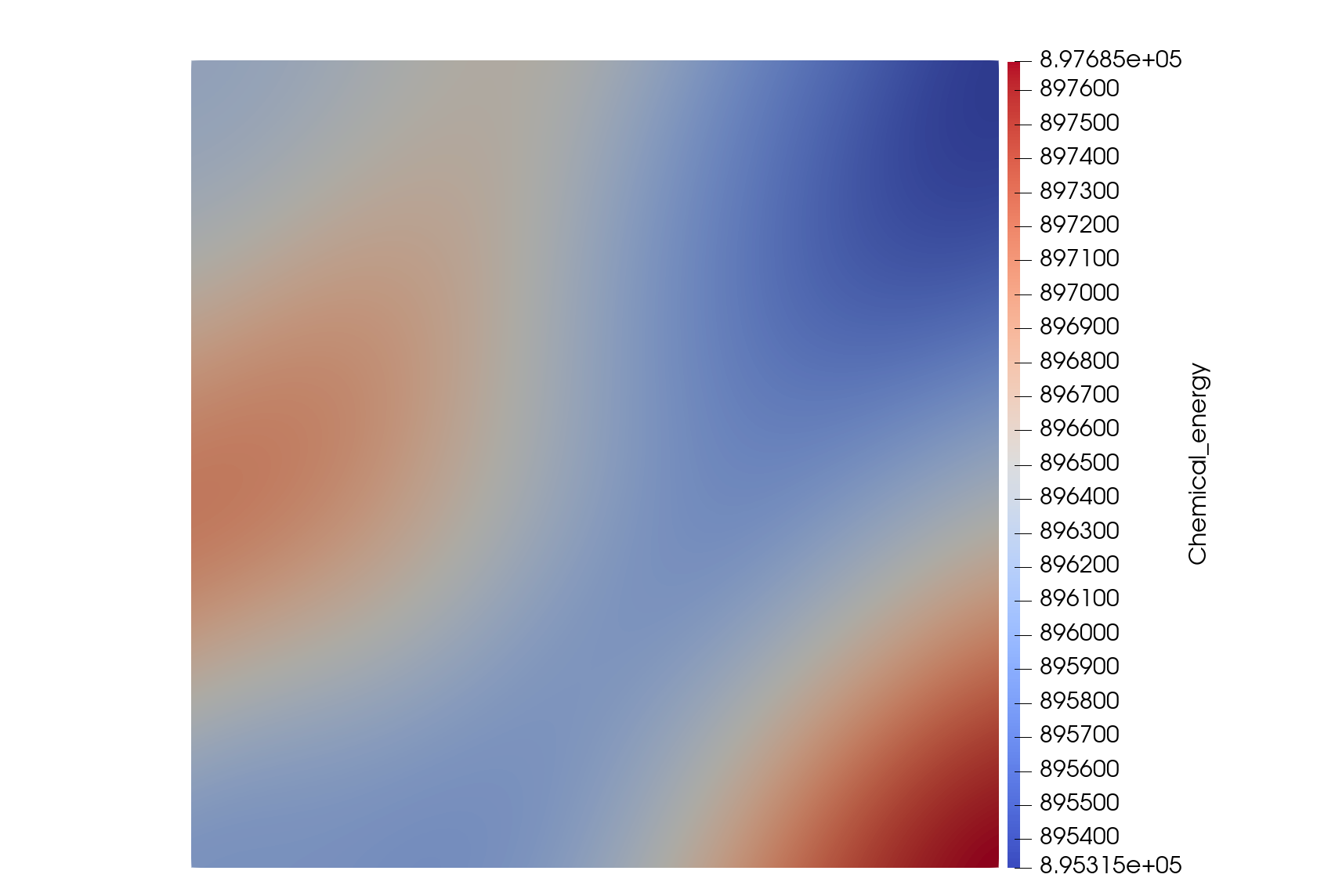}
		
		\includegraphics[width=5.5cm, height=4cm,trim=0.5cm 0cm 0cm 0cm,clip]{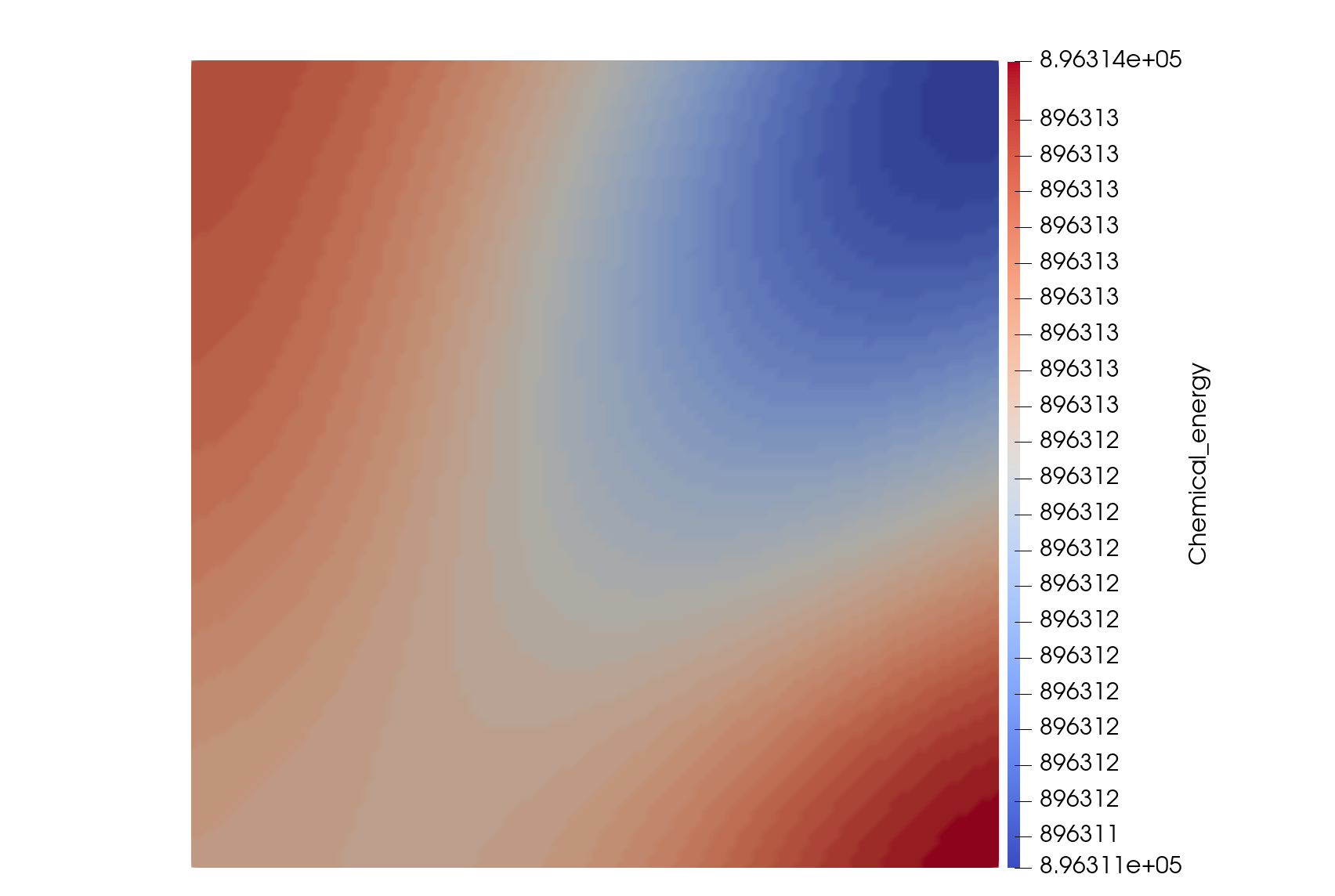}
		\includegraphics[width=5.5cm, height=4cm,trim=0.5cm 0cm 0cm 0cm,clip]{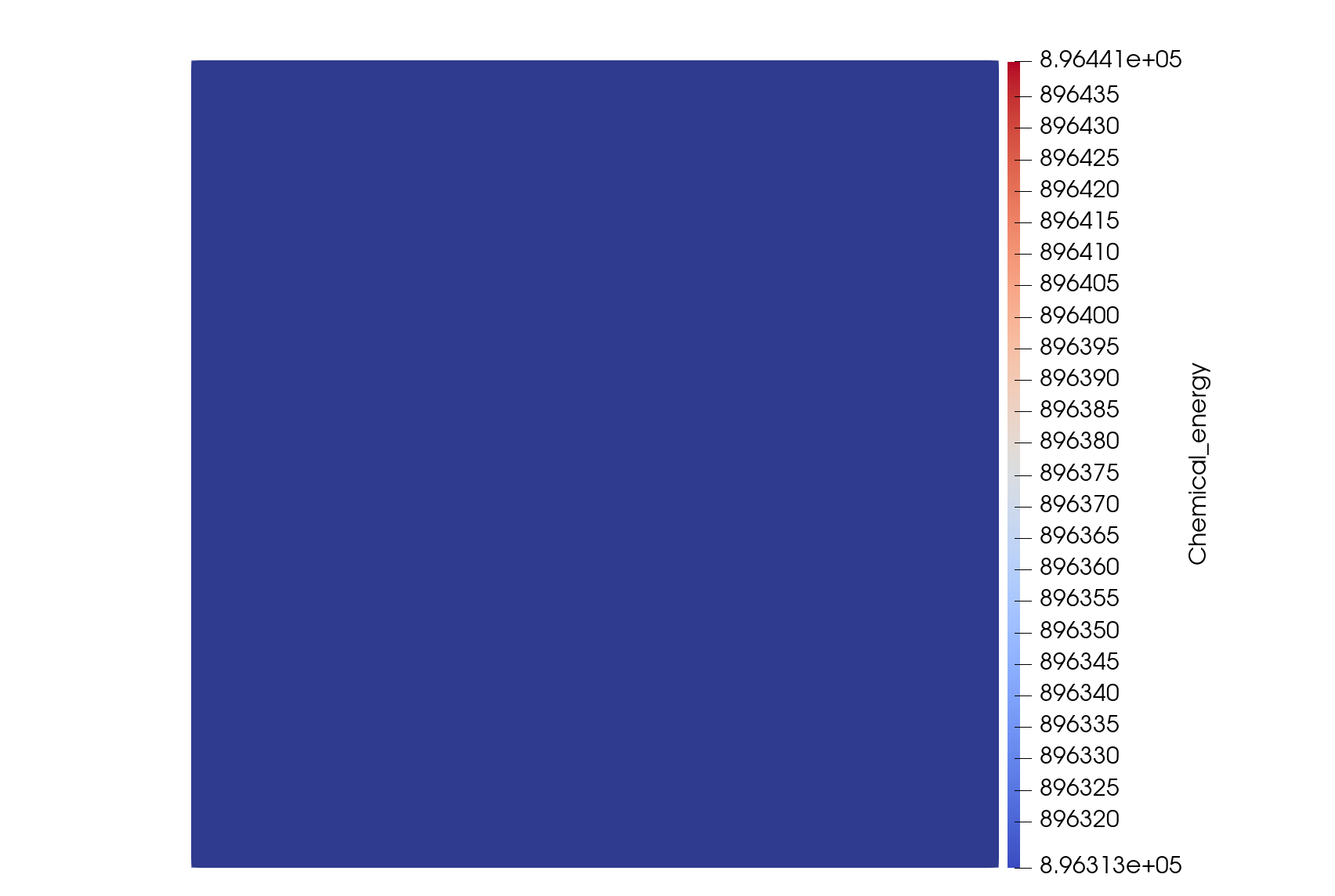}
		\caption{Distributions of chemical potential at different times in Example 2. Top-left: $n = 10$. Top-right: $n = 30$. Bottom-left: $n = 40$. Bottom-right: $n = 100$.}\label{fig1-ch}
	\end{figure}
	
	\begin{figure}[htbp]
		\centering
		\includegraphics[width=5.5cm, height=4cm,trim=0.5cm 0cm 0cm 0cm,clip]{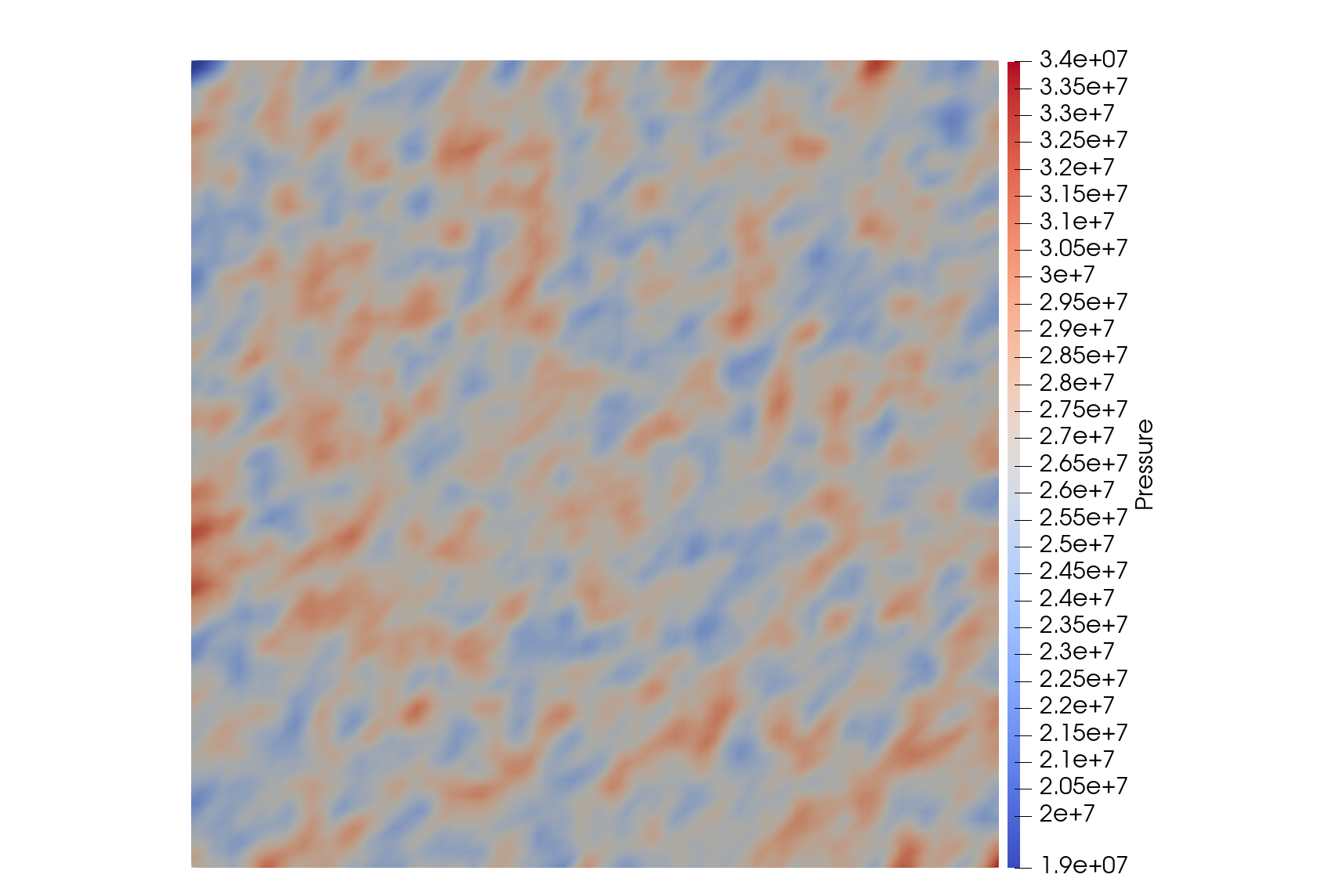}
		\includegraphics[width=5.5cm, height=4cm,trim=0.5cm 0cm 0cm 0cm,clip]{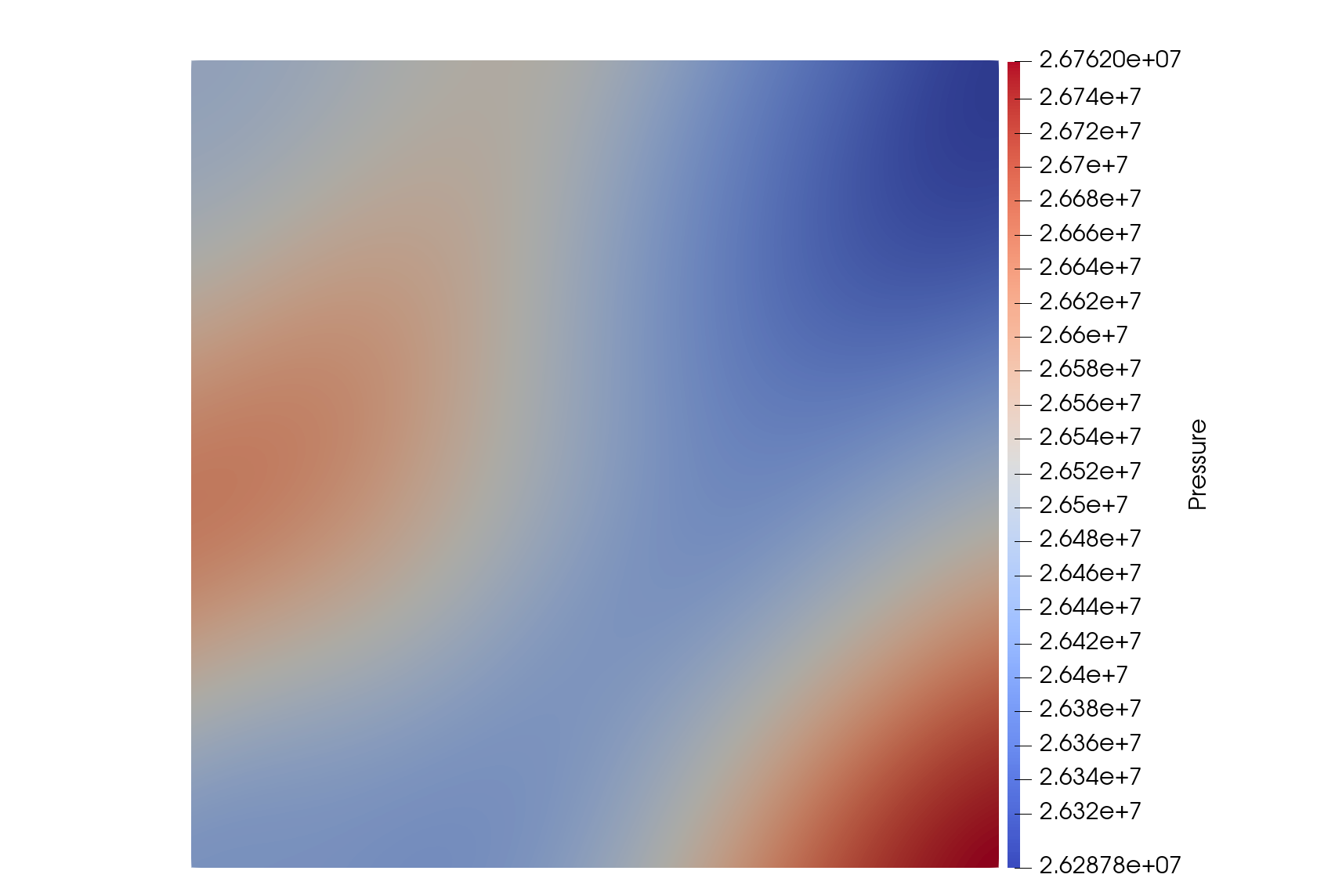}
		
		\includegraphics[width=5.5cm, height=4cm,trim=0.5cm 0cm 0cm 0cm,clip]{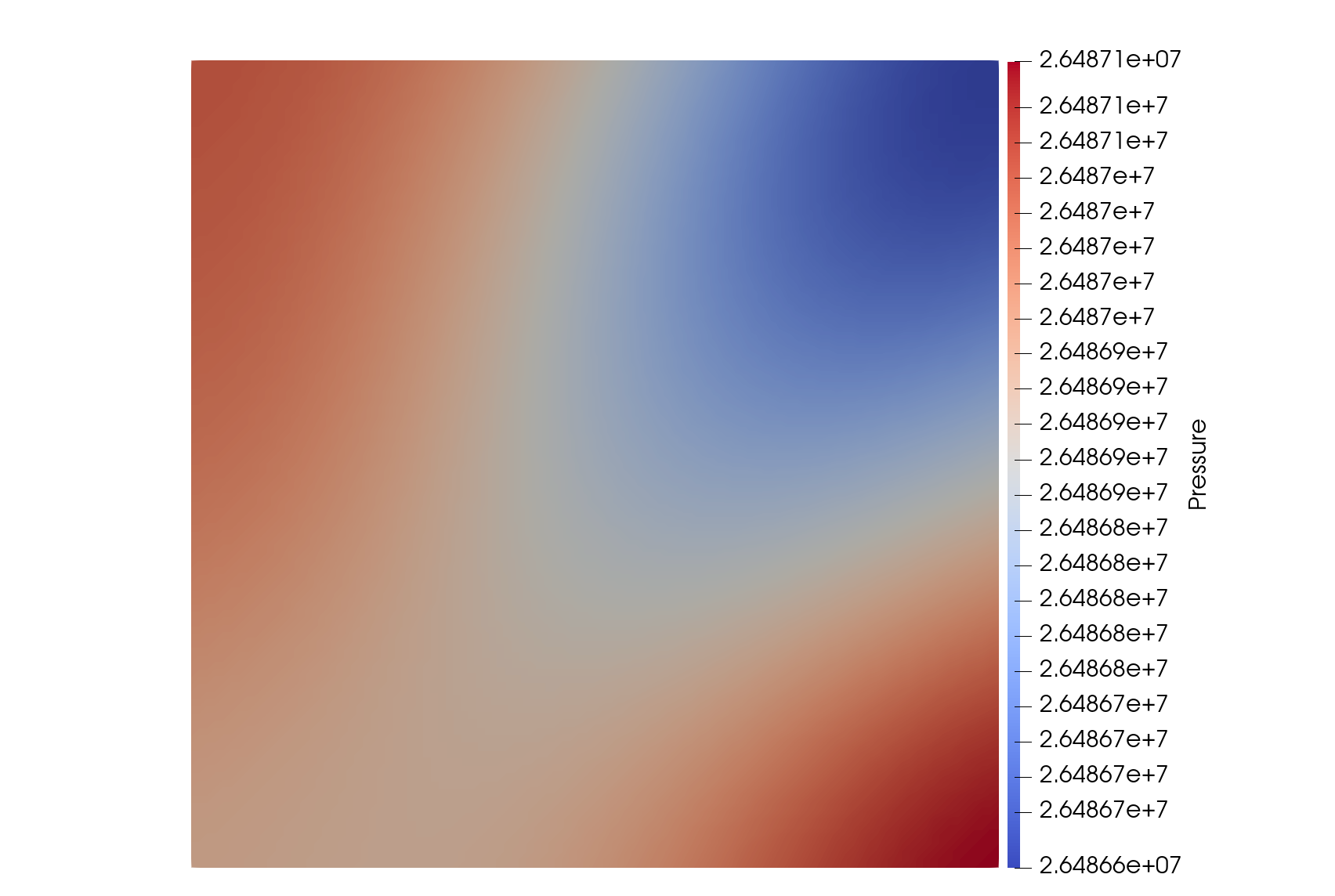}
		\includegraphics[width=5.5cm, height=4cm,trim=0.5cm 0cm 0cm 0cm,clip]{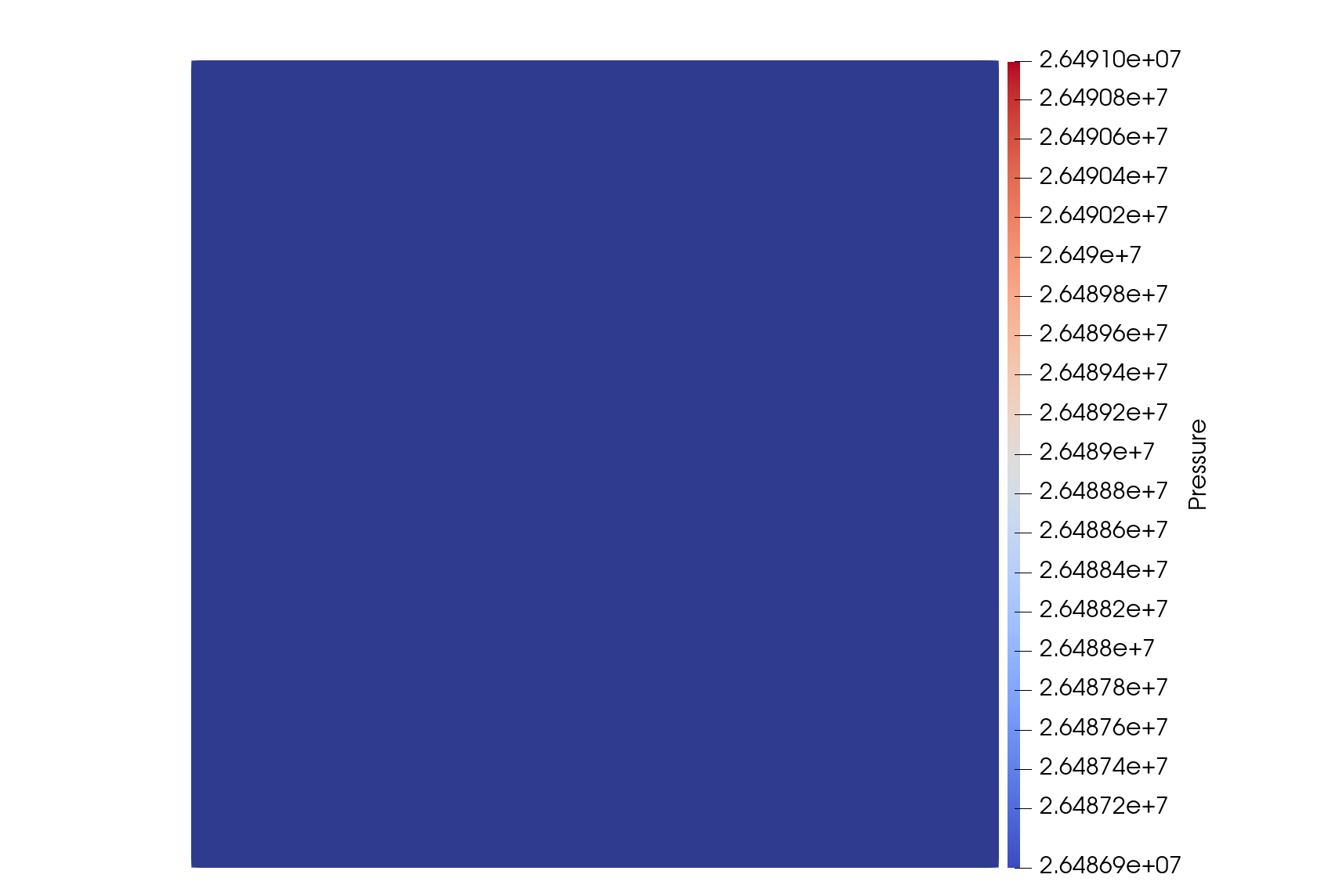}
		\caption{Distributions of pressure at different times in Example 2. Top-left: $n = 10$. Top-right: $n = 30$. Bottom-left: $n = 40$. Bottom-right: $n = 100$.}\label{fig1-pres}
	\end{figure}
	\subsection{Example 3}
	In this example, we consider an injection problem where Dirichlet boundary conditions for molar density are applied at the left boundary of the domain, with a prescribed boundary value of \(1000 \, \mathrm{mol/m^3}\). For this test, the following parameters are chosen: \(\delta = 0.8\), \(N = 10^{11} \, \mathrm{Pa}\), \(\gamma = 10^{11} \, \mathrm{Pa}\), and \(\eta = 10^{8} \, \mathrm{Pa}\). The permeability and porosity fields are assigned a random distribution, as illustrated in Figure \ref{fig2-permeability}, to simulate heterogeneous material properties commonly encountered in practical applications.
	
	Figures \ref{fig2-c}, \ref{fig2-ch}, and \ref{fig2-pres} depict the spatial variations of molar density, chemical potential, and pressure, respectively. The results demonstrate that the Dirichlet boundary condition imposed on the left boundary induces a chemical potential gradient, which drives the injection of molar density from the left to the right side of the domain. This behavior is consistent with the physical mechanism of advection under a concentration gradient. 
	
	Figure \ref{fig2-time} shows the evolution of the stabilization parameter over time steps. It is observed that the stabilization parameter remains nearly constant throughout the time iteration process. This behavior can be attributed to the fixed concentration boundary condition applied at the left end of the domain.
	
	These results highlight the effectiveness of the proposed method in capturing the coupled dynamics of molar density, chemical potential, and pressure in a heterogeneous medium, while maintaining numerical stability and accuracy. The consistent performance of the stabilization parameter underscores the reliability of the approach for long-time simulations.
	\begin{figure}[htbp]
		\centering
		\includegraphics[width=5.5cm, height=4cm,trim=0.5cm 0cm 0cm 0cm,clip]{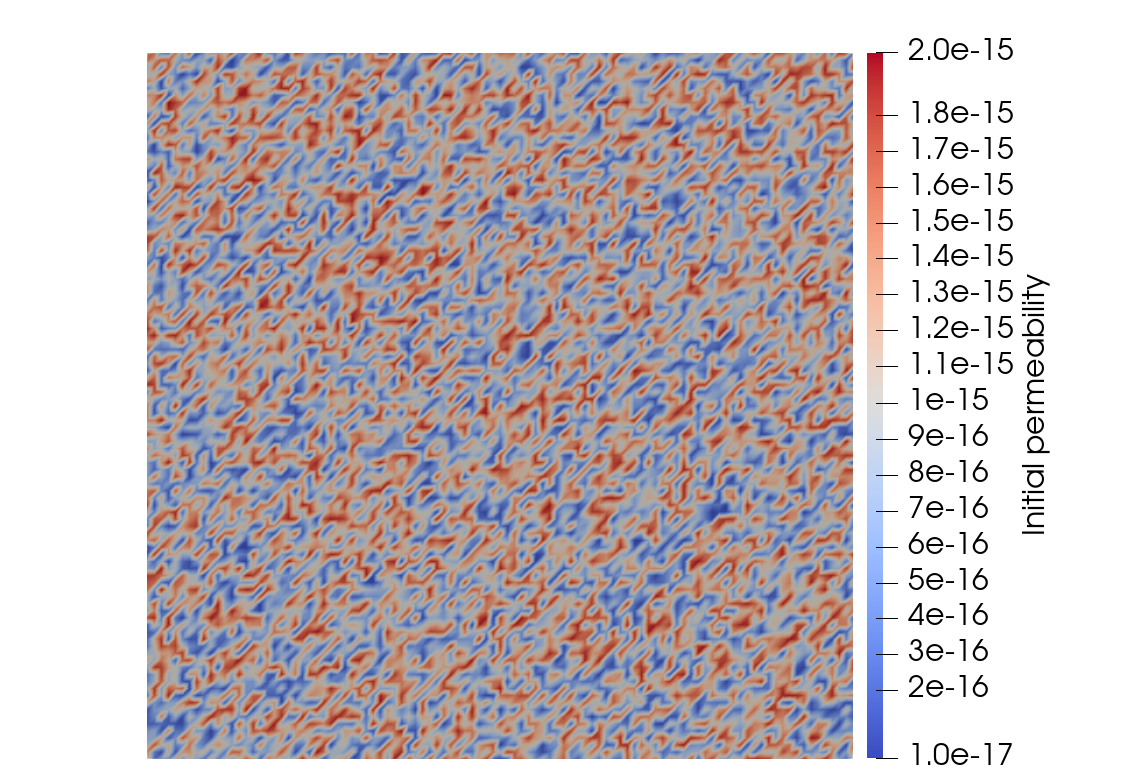}
		\includegraphics[width=5.5cm, height=4cm,trim=0.5cm 0cm 0cm 0cm,clip]{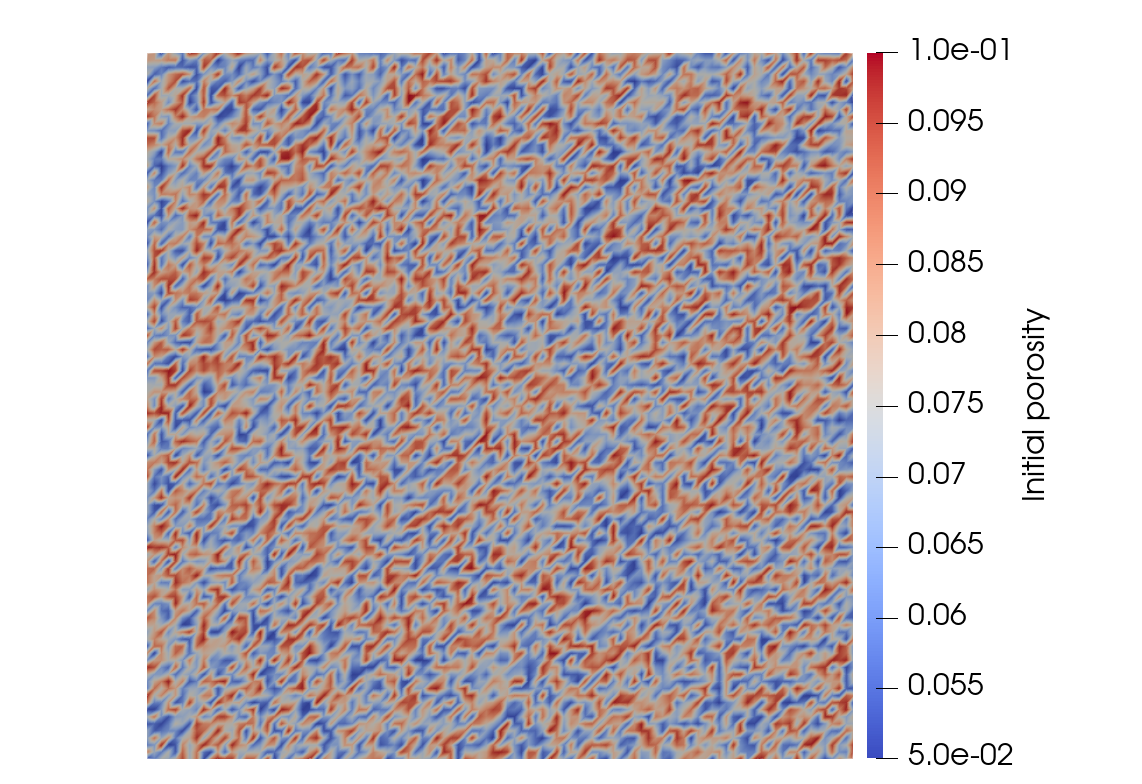}
		\caption{Example 3:   Left: The initial distribution of permeability. Right: The initial distribution of porosity.}\label{fig2-permeability}
	\end{figure}
	\begin{figure}[htbp]
		\centering
		\includegraphics[width=5.0cm, height=4.5cm]{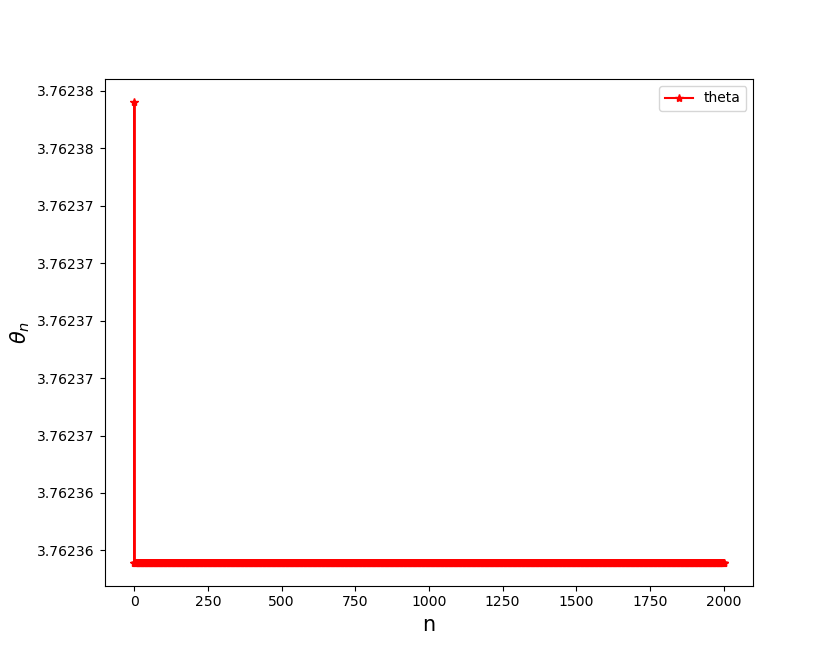}
		\includegraphics[width=5.0cm, height=4.5cm]{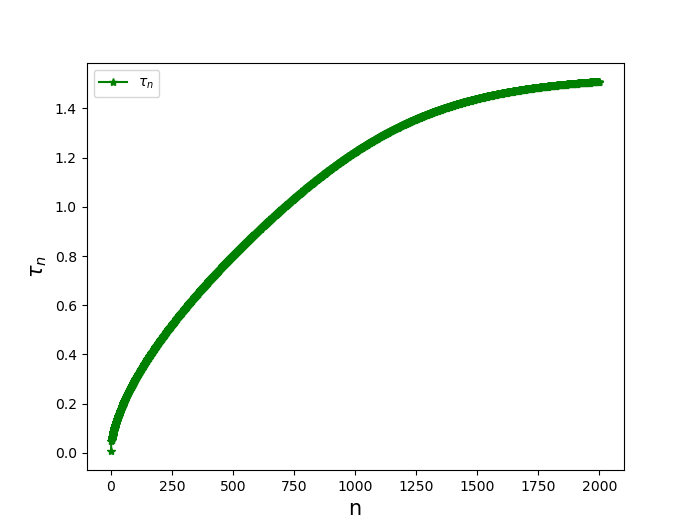}
		\caption{Example 3:   Left: Adaptive values of the stabilization parameter at different time steps. Right: Adaptive values of the time step size.}\label{fig2-time}
	\end{figure}
	\begin{figure}[htbp]
		\centering
		\includegraphics[width=5.5cm, height=4cm,trim=0.5cm 0cm 0cm 0cm,clip]{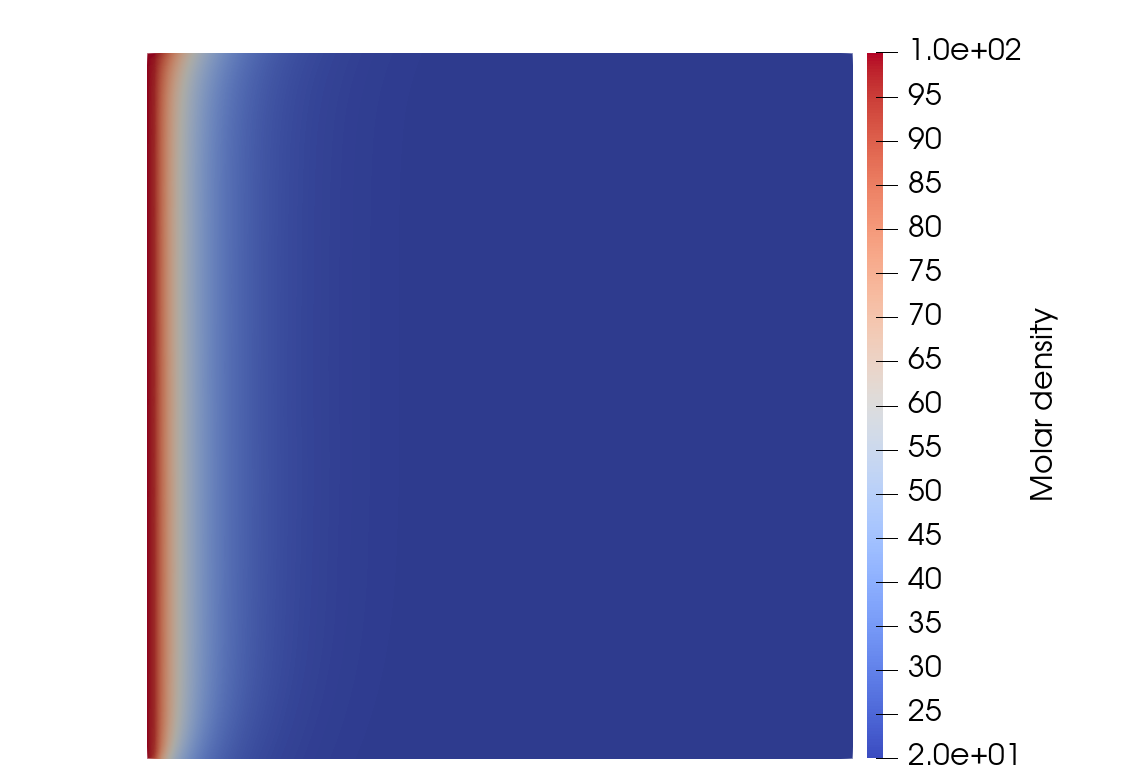}
		\includegraphics[width=5.5cm, height=4cm,trim=0.5cm 0cm 0cm 0cm,clip]{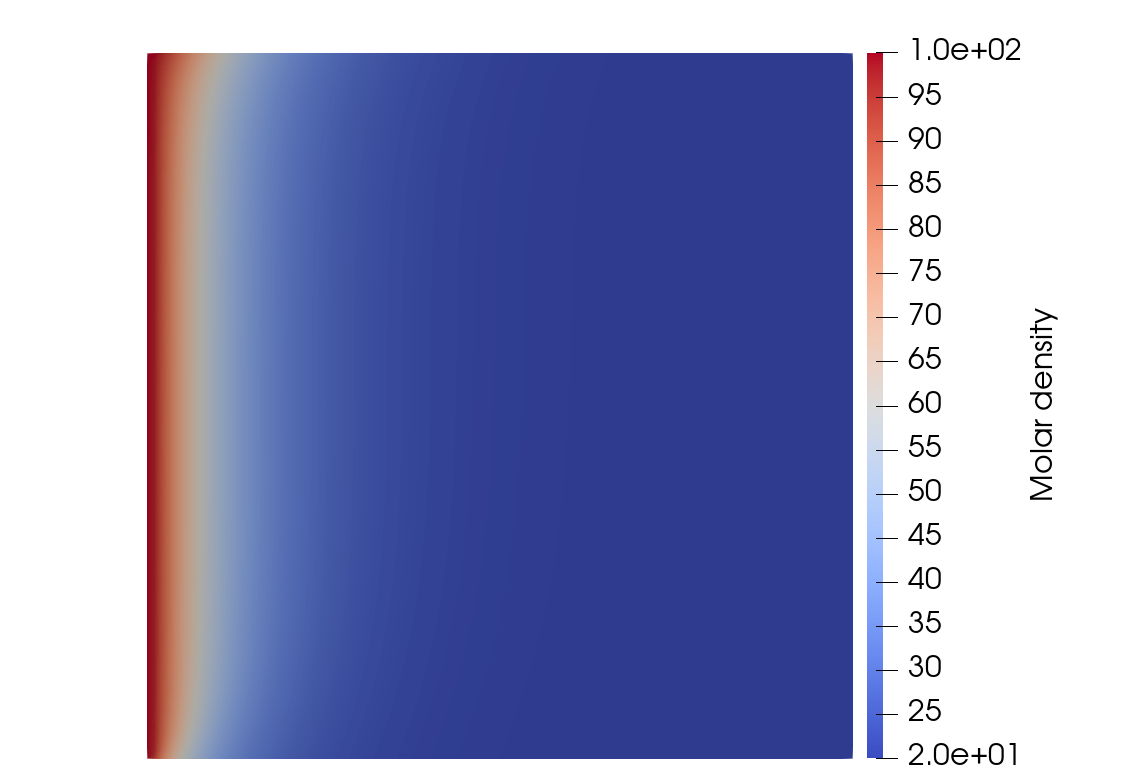}
		
		\includegraphics[width=5.5cm, height=4cm,trim=0.5cm 0cm 0cm 0cm,clip]{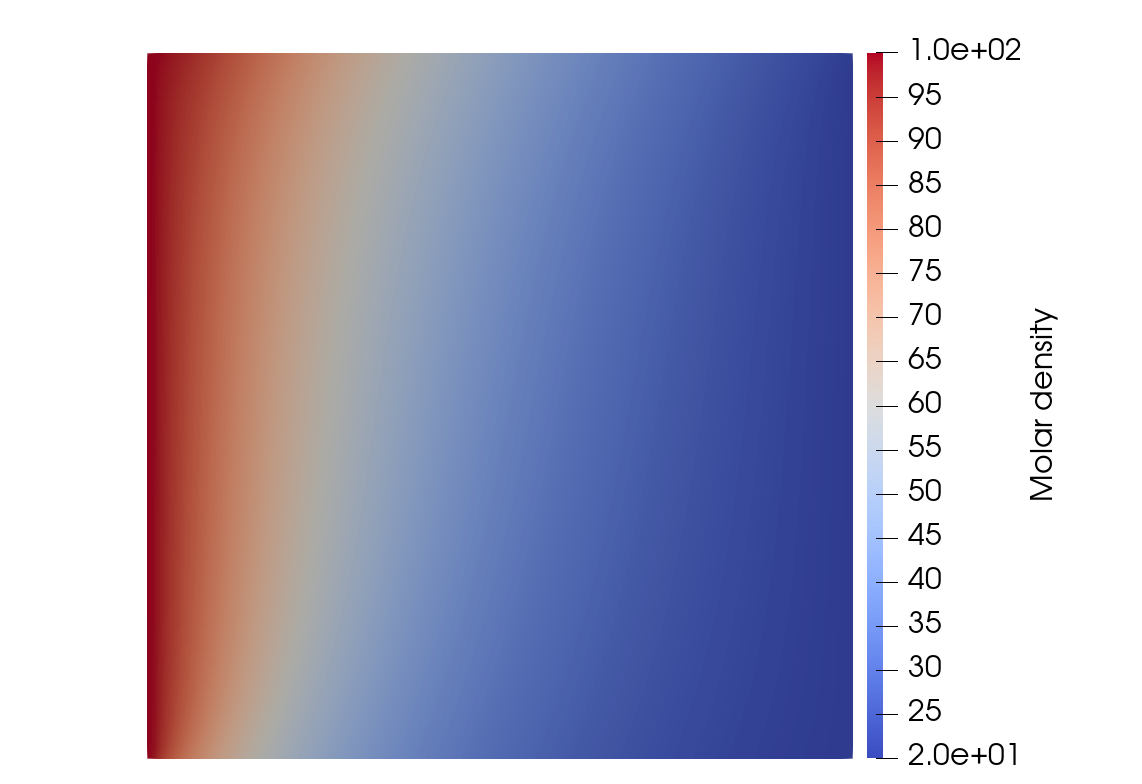}
		\includegraphics[width=5.5cm, height=4cm,trim=0.5cm 0cm 0cm 0cm,clip]{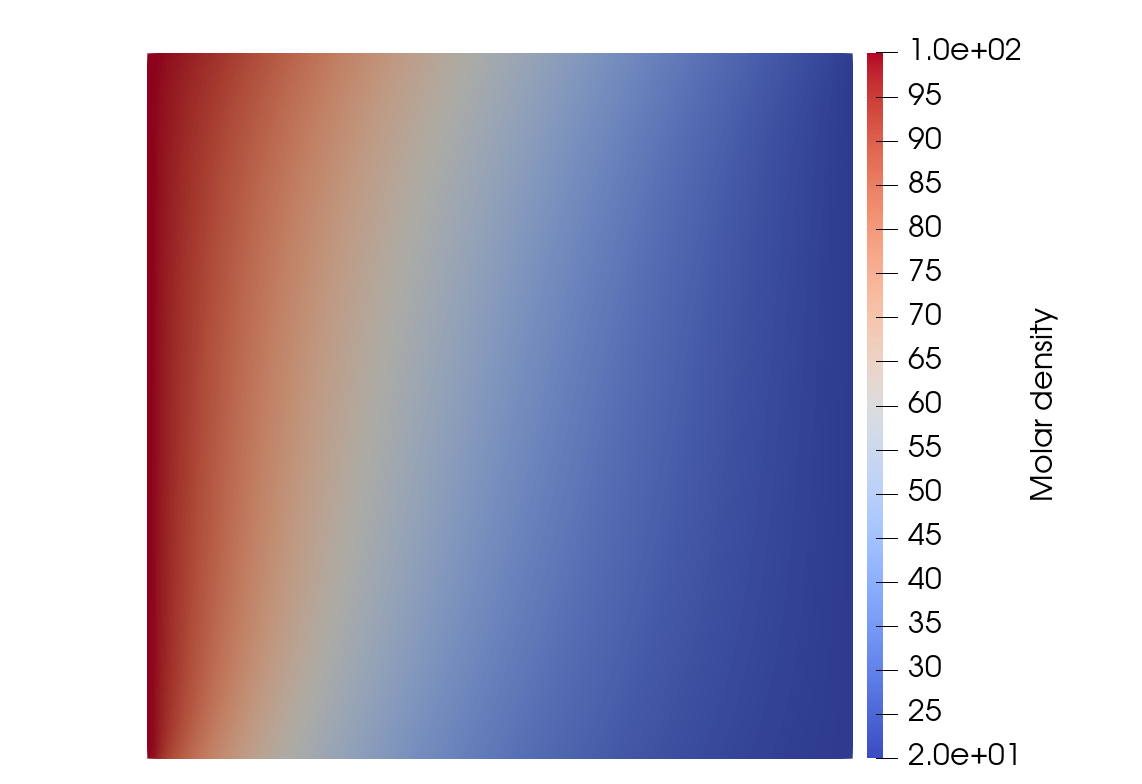}
		\caption{Distributions of molar density at different times in Example 3. Top-left: $n = 100$. Top-right: $n = 200$. Bottom-left: $n = 1000$. Bottom-right: $n = 2000$.}\label{fig2-c}
	\end{figure}
	
	\begin{figure}[htbp]
		\centering
		\includegraphics[width=5.5cm, height=4cm,trim=0.5cm 0cm 0cm 0cm,clip]{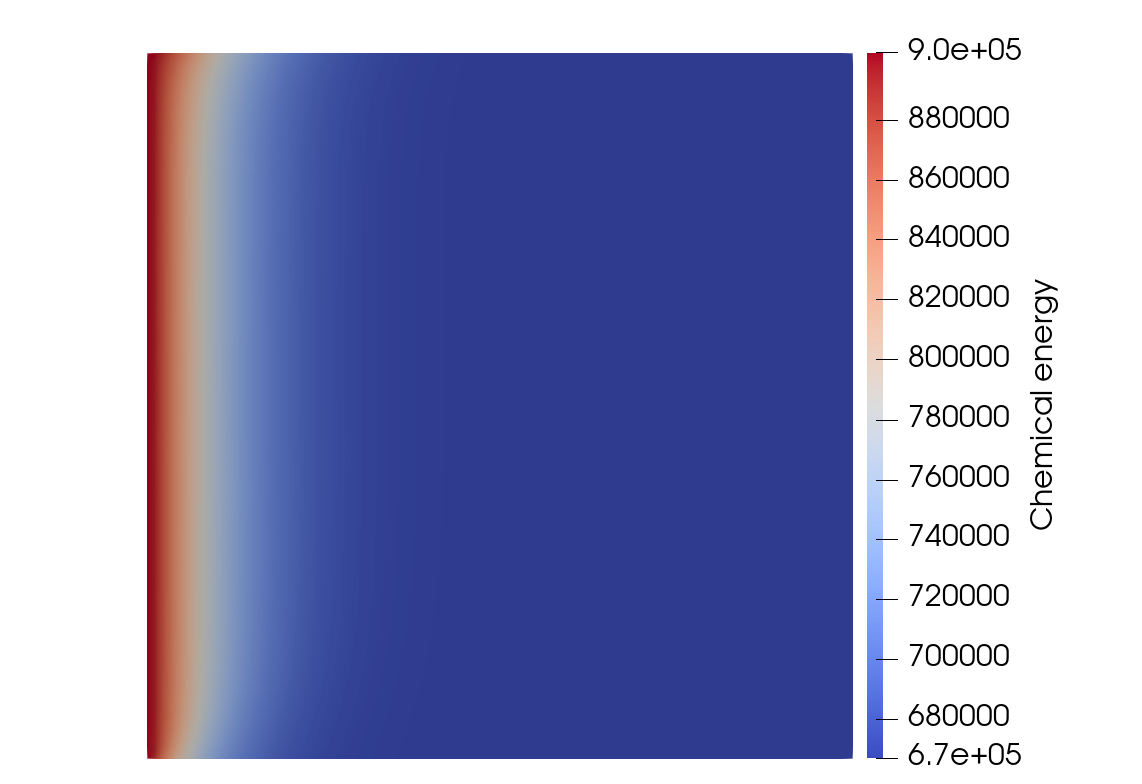}
		\includegraphics[width=5.5cm, height=4cm,trim=0.5cm 0cm 0cm 0cm,clip]{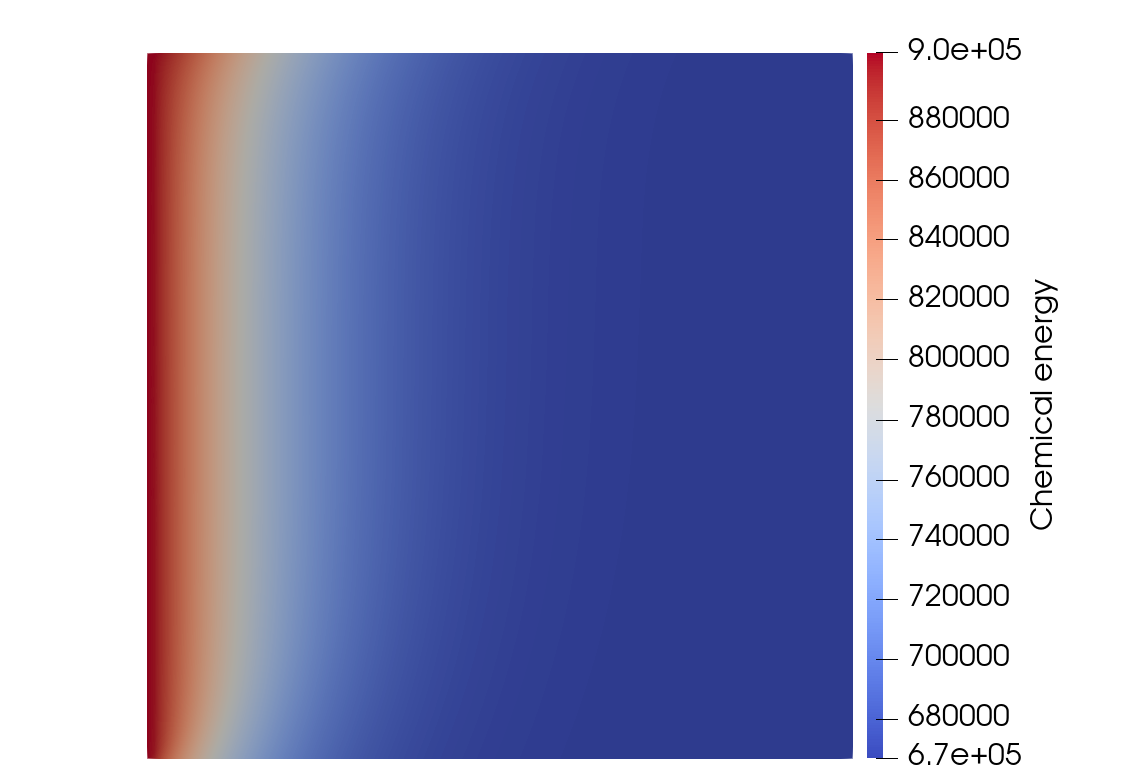}
		
		\includegraphics[width=5.5cm, height=4cm,trim=0.5cm 0cm 0cm 0cm,clip]{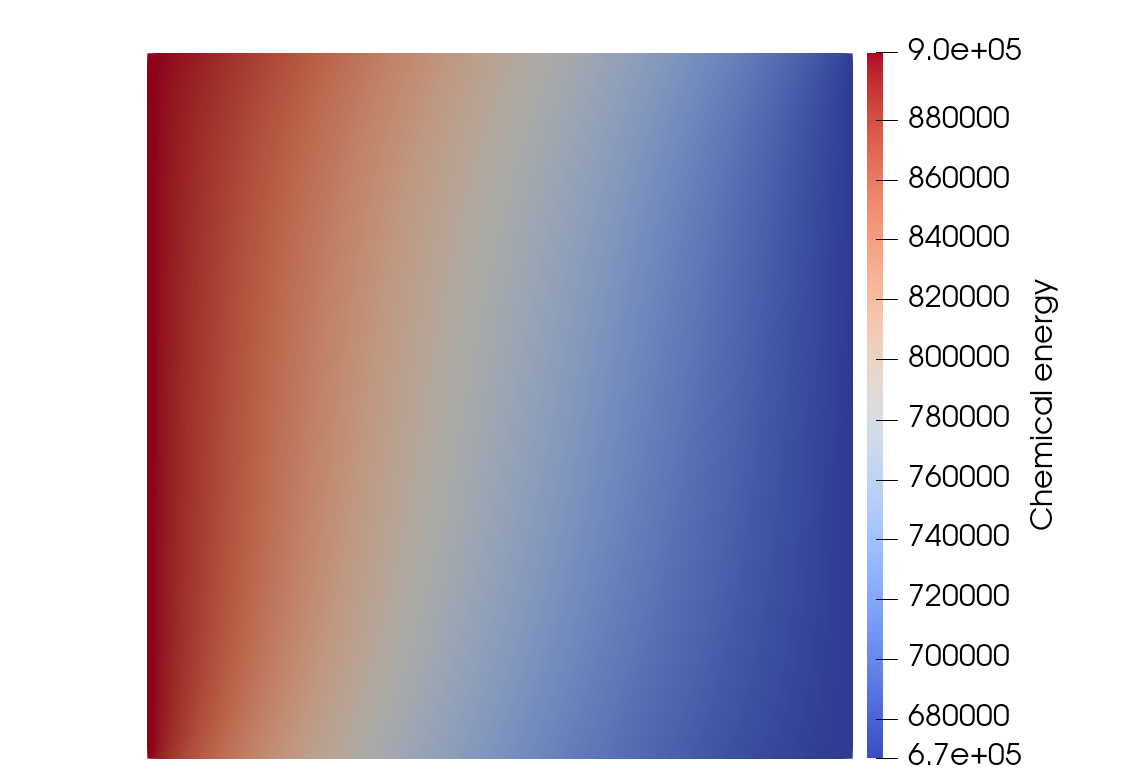}
		\includegraphics[width=5.5cm, height=4cm,trim=0.5cm 0cm 0cm 0cm,clip]{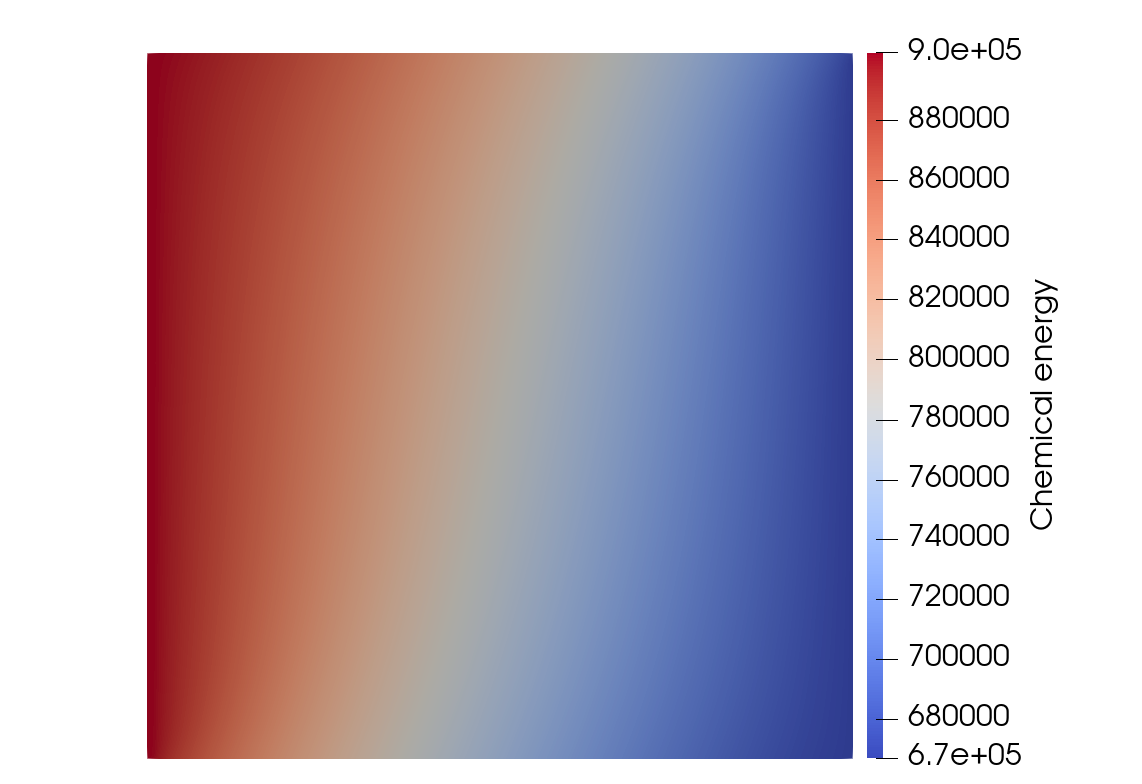}
		\caption{Distributions of chemical potential at different times in Example 3. Top-left: $n = 100$. Top-right: $n = 200$. Bottom-left: $n = 1000$. Bottom-right: $n = 2000$.}\label{fig2-ch}
	\end{figure}
	
	\begin{figure}[htbp]
		\centering
		\includegraphics[width=5.5cm, height=4cm,trim=0.5cm 0cm 0cm 0cm,clip]{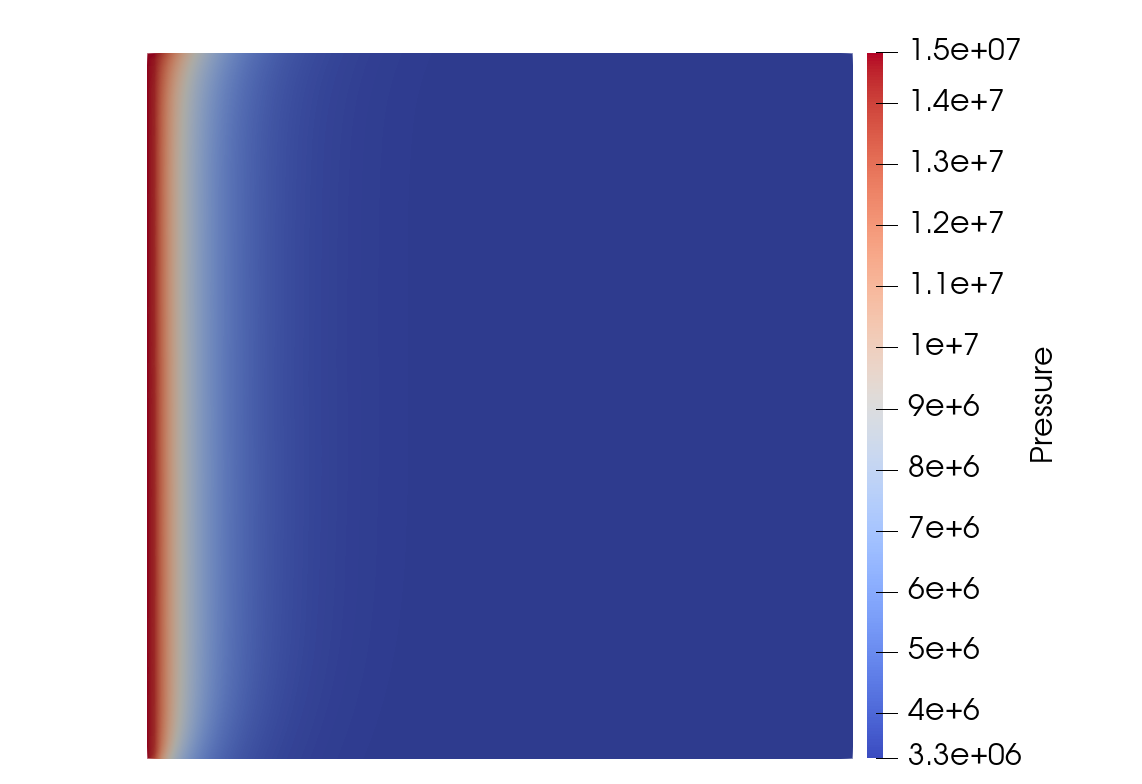}
		\includegraphics[width=5.5cm, height=4cm,trim=0.5cm 0cm 0cm 0cm,clip]{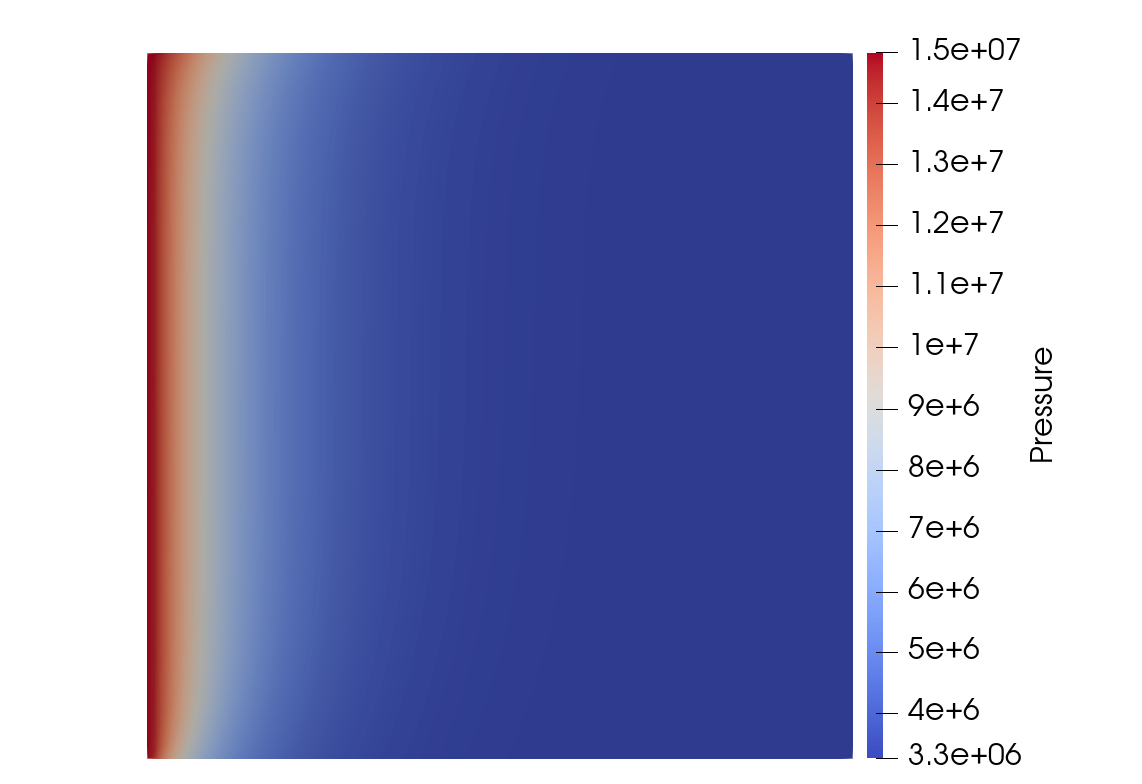}
		
		\includegraphics[width=5.5cm, height=4cm,trim=0.5cm 0cm 0cm 0cm,clip]{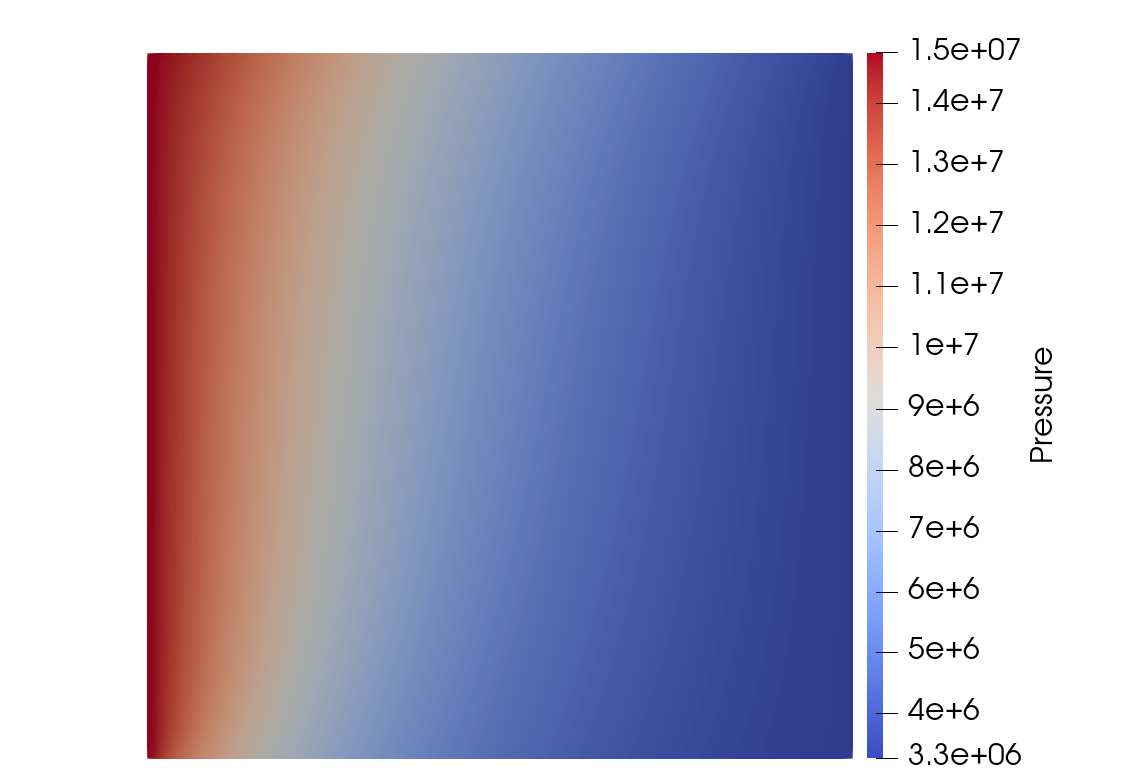}
		\includegraphics[width=5.5cm, height=4cm,trim=0.5cm 0cm 0cm 0cm,clip]{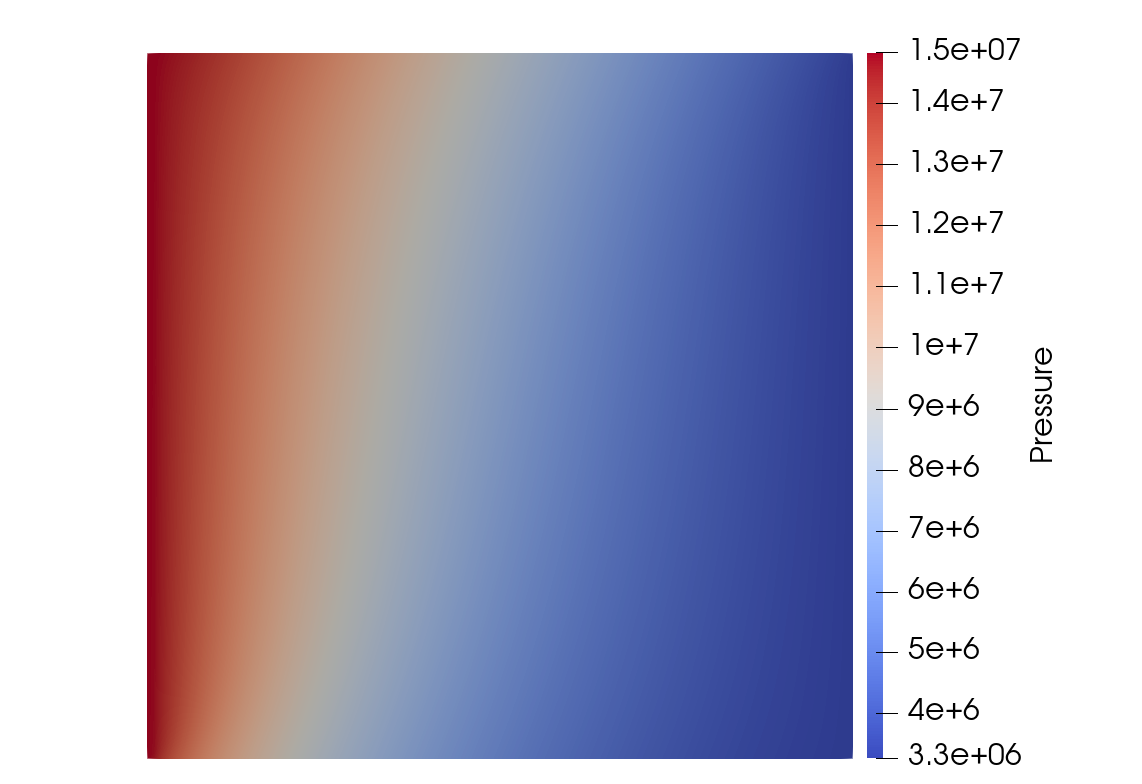}
		\caption{Distributions of pressure at different times in Example 3. Top-left: $n = 100$. Top-right: $n = 200$. Bottom-left:$ n = 1000$. Bottom-right: $n = 2000$.}\label{fig2-pres}
	\end{figure}
	\subsection{Example 4}
	In the third example, we demonstrate the robustness of the proposed method in three-dimensional (3D) space. The initial distribution of permeability is generated using the following randomized approach:
	$$
	\kappa^0 = \kappa_0+\operatorname{rand}(\boldsymbol{x}) \cdot\left(\kappa_1-\kappa_0\right),\\
	$$
	where $\kappa_0 = 0.5$ md and $\kappa_1 = 10$ md which represent the lower and upper bounds of the permeability, respectively. Here, 
	rand($\boldsymbol{x}$) denotes a random function that generates spatially varying values between 0 and 1. For this test, we set $\delta = 0.5$. The spatial discretization is performed using a uniform tetrahedral mesh with 162,000 elements, ensuring sufficient resolution for accurate 3D simulations. Figure \ref{fig3-mass} demonstrates that the proposed scheme preserves key physical properties, including the dissipation property, conservation of total moles, and boundedness of molar density. These results confirm the numerical stability and physical consistency of the method. Figure \ref{fig3-time} illustrates the adaptive values of the stabilization parameter and time steps, highlighting the dynamic adjustment of these parameters to maintain stability and accuracy throughout the simulation. Figure \ref{fig3-c} shows the evolution of molar density over time, capturing the transient behavior of the system in 3D space.
	
	The numerical results demonstrate that the proposed scheme performs effectively for three-dimensional simulations, accurately capturing the complex dynamics of the system while maintaining stability and computational efficiency. The ability to handle spatially varying permeability fields and adaptively adjust stabilization parameters further underscores the robustness and versatility of the method for realistic 3D applications. 
	\begin{figure}[htbp]
		\centering
		\includegraphics[width=5.0cm, height=4.5cm]{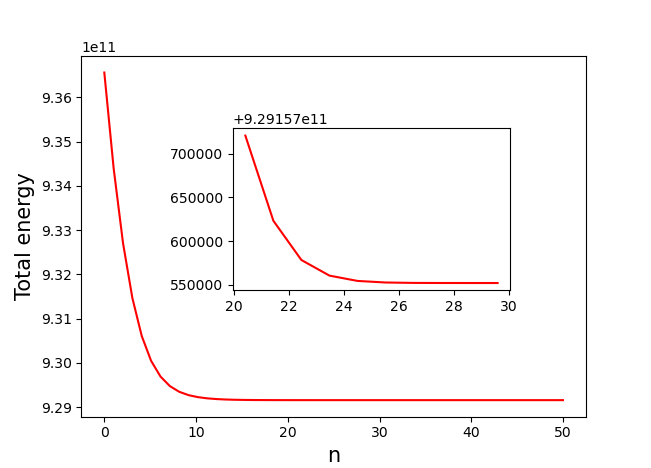}
		\includegraphics[width=5.0cm, height=4.5cm]{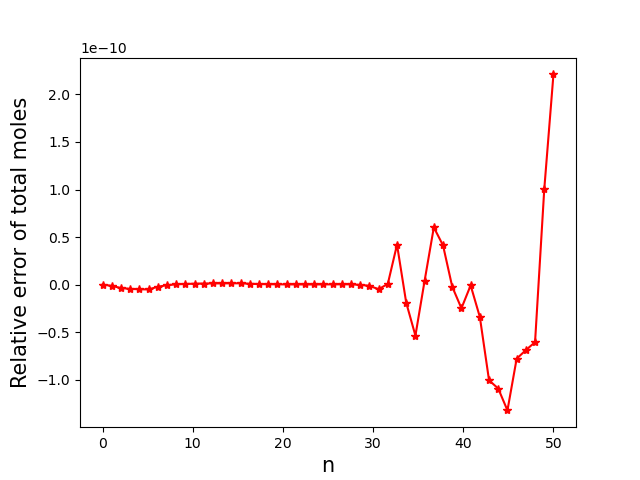}
		\includegraphics[width=5.0cm, height=4.5cm]{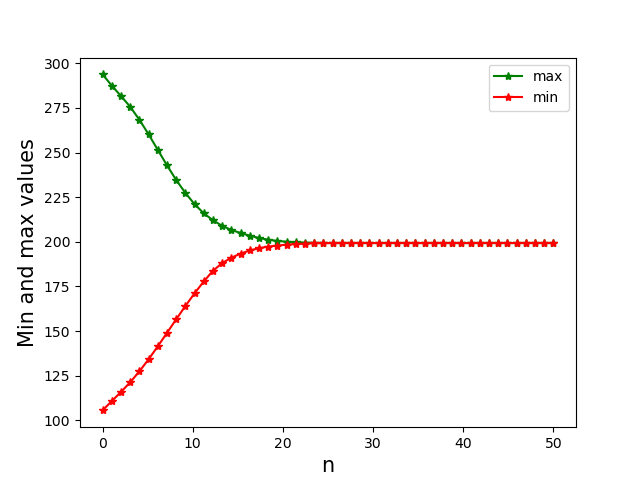}
		\caption{Example 4:  Left: Distributions of energy at different time steps. Middle: Mass conservation at different time steps. Right: Minimum and maximum values of molar density.}\label{fig3-mass}
	\end{figure}
	\begin{figure}[htbp]
		\centering
		\includegraphics[width=5.0cm, height=4.5cm]{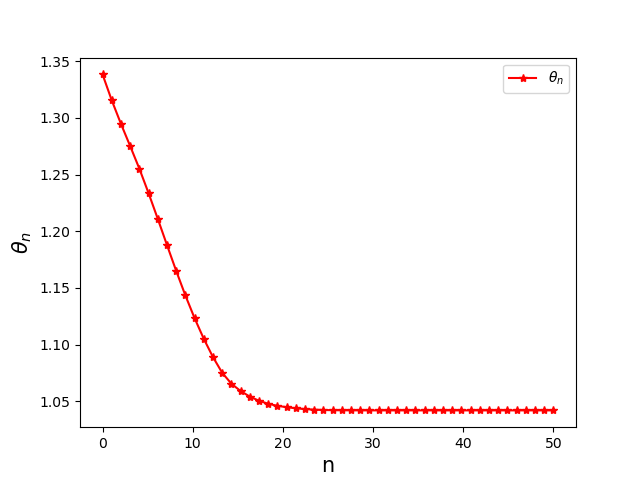}
		\includegraphics[width=5.0cm, height=4.5cm]{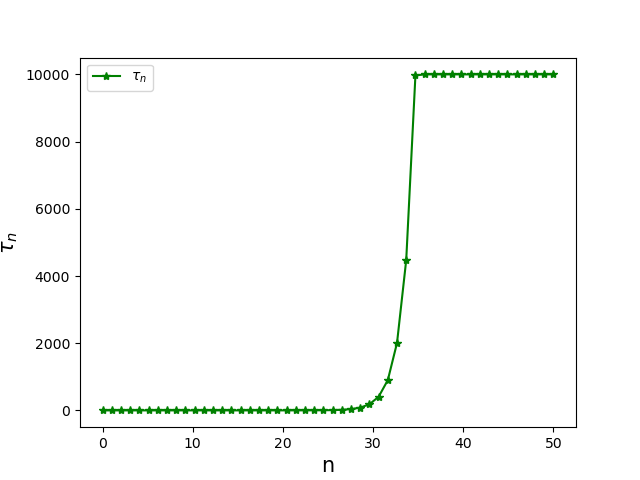}
		\caption{Example 4: Left: Adaptive values of the stabilization parameter at different time steps. Right: Adaptive values of the time step size.}\label{fig3-time}
	\end{figure}
	\begin{figure}[htbp]
		\centering
		\includegraphics[width=5.5cm, height=4cm,trim=0.5cm 0cm 0cm 0cm,clip]{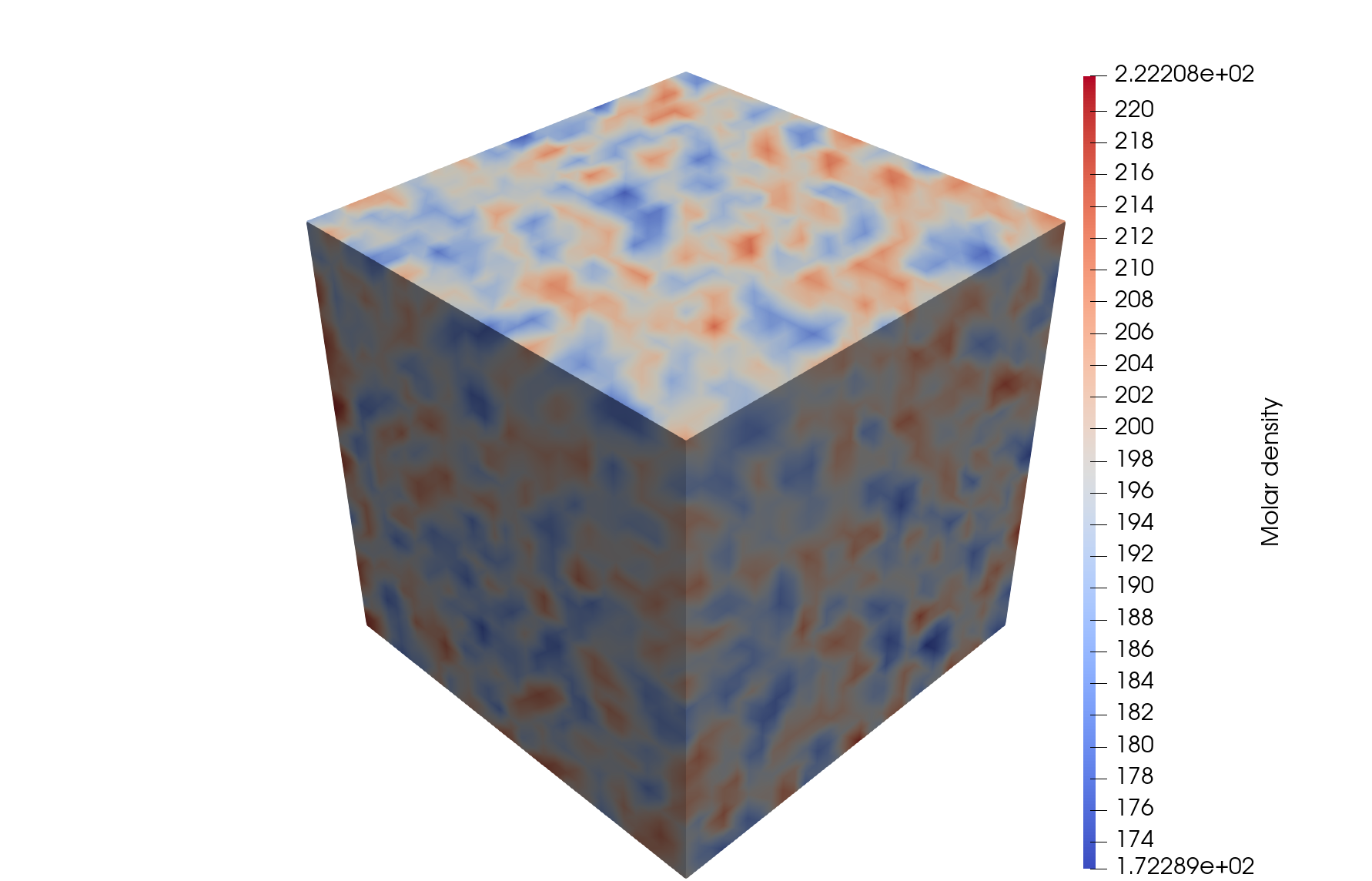}
		\includegraphics[width=5.5cm, height=4cm,trim=0.5cm 0cm 0cm 0cm,clip]{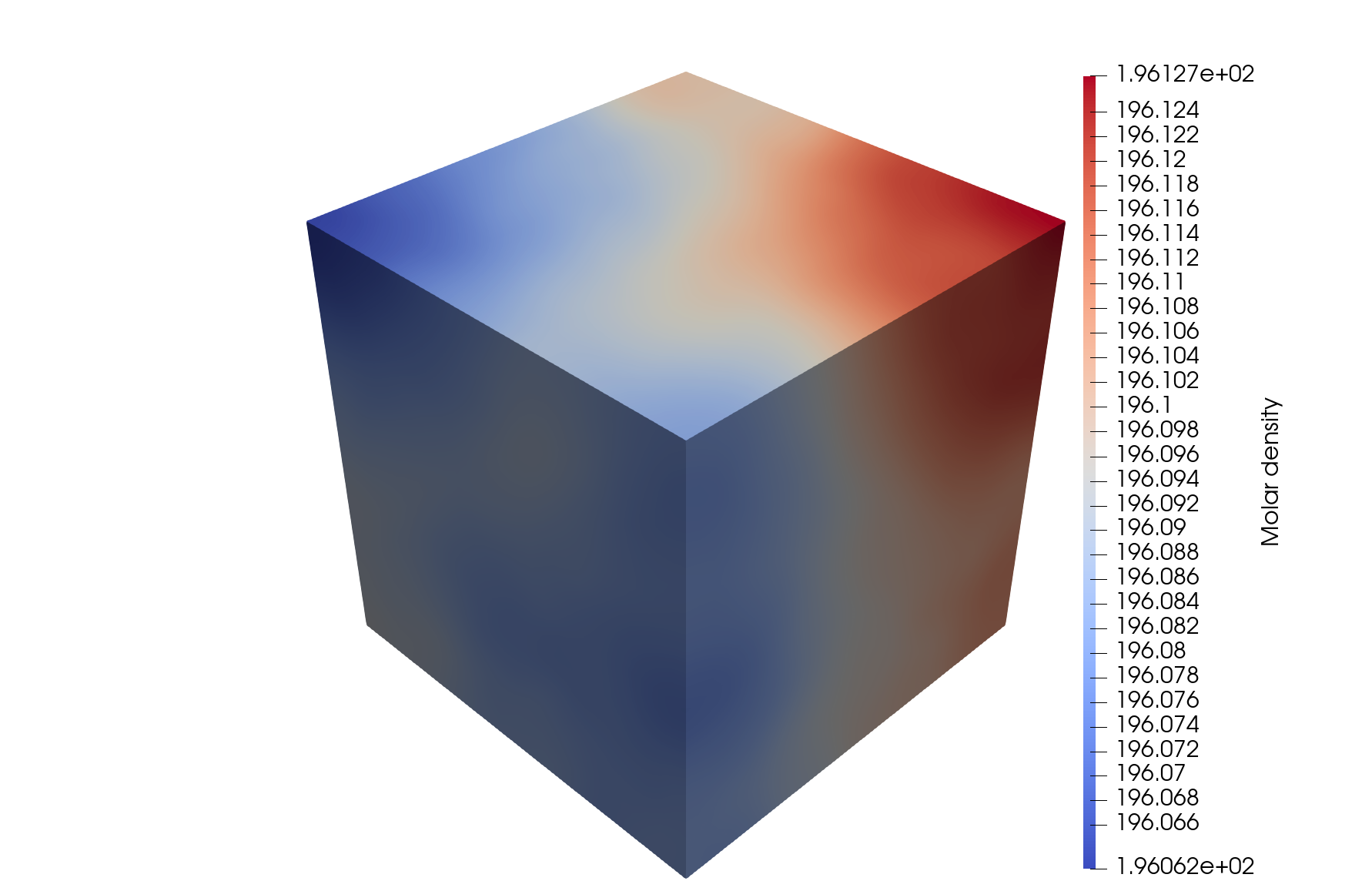}
		\includegraphics[width=5.5cm, height=4cm,trim=0.5cm 0cm 0cm 0cm,clip]{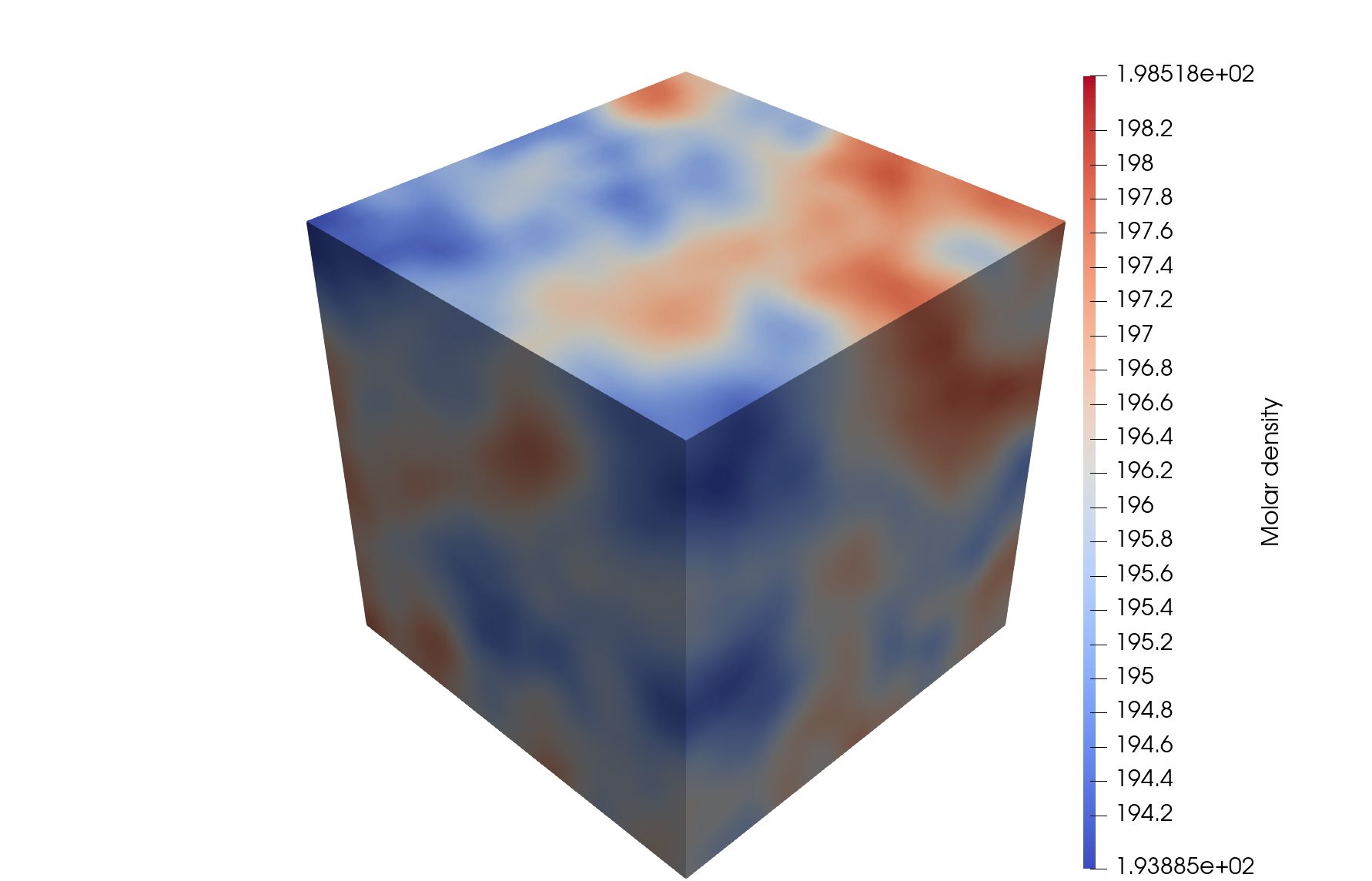}
		\includegraphics[width=5.5cm, height=4cm,trim=0.5cm 0cm 0cm 0cm,clip]{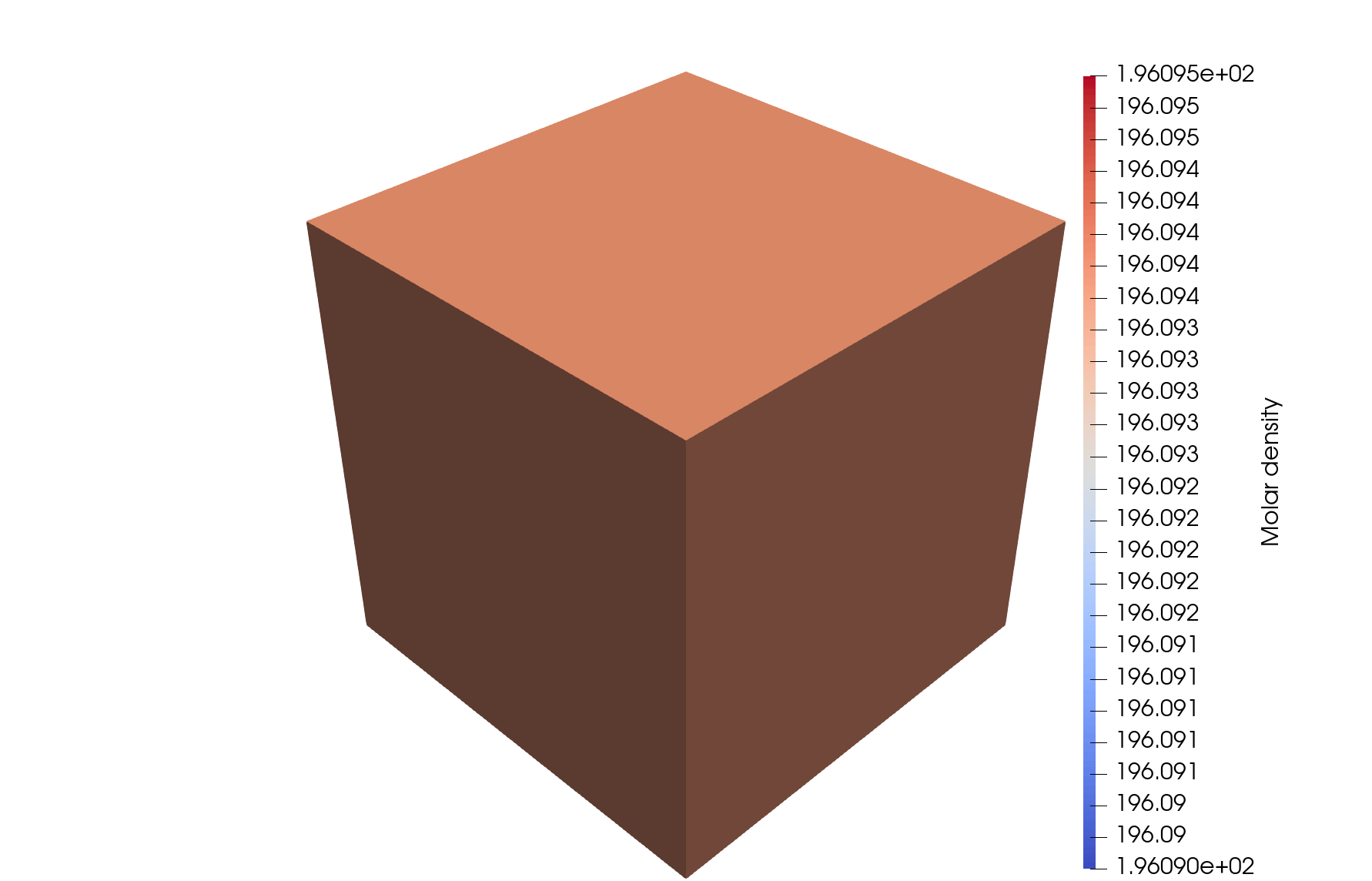}
		\caption{Distributions of molar density at different times in Example 4. Top-left: $n = 10$. Top-right: $n = 20$. Bottom-left:$ n = 30$. Bottom-right: $n = 50$.}\label{fig3-c}
	\end{figure}

	\section{Conclusion}
	
	In this work, we have developed and analyzed a linear, energy-stable numerical method for simulating thermodynamically consistent gas flow in poroelastic media. The proposed approach effectively addresses the inherent nonlinearity and strong coupling between gas dynamics and rock deformation while rigorously preserving key physical properties, including mass conservation, energy dissipation, and bounded molar density. By introducing a stabilization strategy that retains the original energy structure, we achieve a linear iterative scheme with provable convergence. The adaptive stabilization parameter and time-stepping strategy further enhance computational efficiency and robustness of the algorithm, particularly in highly dynamic or nonlinear regimes. The mixed finite element formulation, combined with upwind discretization, ensures stable and physically consistent spatial resolution. Extensive numerical experiments demonstrate the accuracy, stability, and efficiency of the proposed method, confirming its potential as a reliable tool for simulating complex gas-poroelastic systems. Future work may explore its extension to multi-component gas mixtures or multi-phase flow problems in geomechanics and subsurface energy applications.
	
	\section*{Acknowledgement}
	The work of Huangxin Chen is supported by National Key Research and Development Project of China (Grant No. 2023YFA1011702) and National Natural Science Foundation of China (Grant No. 12471345, 12122115). The work of Shuyu Sun is supported by National Key Research and Development Project of China (Grant No. 2023YFA1011701) and National Natural Science Foundation of China (Grant No. 12571466).

\end{document}